\renewcommand*\l@section{\@dottedtocline{1}{1.5em}{2.3em}}
\theoremstyle{plain}
\newtheorem{theorem}{Theorem}
\newtheorem{proposition}[theorem]{Proposition}
\newtheorem{lemma}[theorem]{Lemma}
\newtheorem{example}[theorem]{Example}
\newtheorem{corollary}[theorem]{Corollary}
\theoremstyle{definition}
\newtheorem{definition}{Definition}
\newtheoremstyle{myrem}
 {3pt}
 {3pt}
 {\normalsize}
 { }
 {\itshape}
 {:}
 { }
 {}
 \theoremstyle{myrem}
 \newtheorem{remark}{Remark}
 \appto\remark{\leftskip\parindent}
 \appto\remark{\rightskip\parindent}
\DeclareMathOperator{\grad}{grad}
\numberwithin{equation}{section}
\numberwithin{theorem}{section}
\begin{document}

\begin{center}
{\Large{\textbf{On The Discrete Morse Functions  for   
Hypergraphs
 }}}

 \vspace{0.58cm}
 
 Shiquan Ren*, Chong Wang*,  Chengyuan Wu*, Jie Wu*

\bigskip

\bigskip
    
    \parbox{24cc}{{\small

{\textbf{Abstract}.}  
A hypergraph can be obtained from a simplicial complex by deleting some non-maximal simplices.  In this paper,  we study the embedded homology as well as the homology of the (lower-)associated simplicial  complexes for hypergraphs.  We  generalize the discrete Morse functions on simplicial complexes.  We study the discrete Morse functions on hypergraphs as well as the discrete Morse functions on the  (lower-)associated simplicial complexes  of the hypergraphs.     
}

\bigskip

}

\begin{quote}
 {\bf 2020 Mathematics Subject Classification.}  	Primary  55U10,  05C65; Secondary   58E05, 	58Z05.

{\bf Keywords and Phrases.}   hypergraphs,  simplicial complexes,   discrete Morse functions,  discrete gradient vector fields,   homology. 
\end{quote}

\end{center}

\vspace{1cc}

\footnotetext[1] { * first authors.  }

\section{Introduction}

Hypergraph is an important model for complex networks, for example, the collaboration network.   In mathematics,  
\begin{itemize}
\item
a hypergraph  can be   either a simplicial complex or   a  "non-closed  simplicial complex"\footnote[2]{A  simplicial complex  $\mathcal{K}$ is {\it closed}  in the following sense:
\begin{enumerate}[(a).]
\item
{\bf combinatorically closed}: for any simplex $\sigma\in \mathcal{K}$ and any non-empty subset $\tau\subseteq\sigma$,  we have $\tau\in\mathcal{K}$; 
\item
{\bf topologically closed}:  the geometric realization $|\mathcal{K}|$  (cf.  \cite{milnor})  of $\mathcal{K}$  is a closed subset of some Euclidean space $\mathbb{R}^N$.  \end{enumerate}
Note that  $(a)\Longleftrightarrow (b)$.  

\begin{quote}
\noindent {\bf Definition. }Let $\mathcal{H}$  be a hypergraph  (cf.  Definition~\ref{def-rev0}).  Let $\mathcal{K}_\mathcal{H}$  be an  (abstract)  simplicial complex such that each hyperedge   (cf.  Definition~\ref{def-rev00})  of $\mathcal{H}$  is a simplex of $\mathcal{K}_\mathcal{H}$.  Let $|\mathcal{K}_\mathcal{H}|$  be the geometric  realization of $\mathcal{K}_\mathcal{H}$  in $\mathbb{R}^N$.   For each $\sigma\in \mathcal{K}$,  let $|\sigma|$  be the geometric simplex in $|\mathcal{K}|$.  We  define the  {\it geometric realization} $|\mathcal{H}|$ of $\mathcal{H}$   to be the subset of $|\mathcal{K}_\mathcal{H}|$
 given by  
 \begin{eqnarray*}
|\mathcal{H}|= \{x\in \mathbb{R}^N\mid {\rm there~exists~} \sigma\in\mathcal{H} {\rm ~such~that~} x\in {\rm Int}(|\sigma|)\}. 
 \end{eqnarray*}
 \end{quote}

A  hypergraph $\mathcal{H}$ can be {\it non-closed}  in the following sense:  
\begin{enumerate}[(a)'.]
\item
{\bf combinatorically non-closed}: there exists an hyperedge $\sigma\in \mathcal{H}$ and an  non-empty subset $\tau\subseteq\sigma$  such that   $\tau\notin \mathcal{H}$; 
\item
{\bf topologically non-closed}:  the geometric realization $|\mathcal{H}|$  of $\mathcal{H}$  is not  a closed subset of some Euclidean space $\mathbb{R}^N$.  \end{enumerate}
Note that (a)'$\Longleftrightarrow$(b)'.  }   with some non-maximal faces missing;
\item
  a simplicial complex  can be regarded as  a  special  hypergraph with no non-maximal faces missing.  
\end{itemize}
It is natural to generalize the discrete Morse functions on simplicial complexes  (cf.  \cite{forman9} - \cite{forman3})   and consider discrete Morse functions on hypergraphs.  


\smallskip

Firstly,  We  review some backgrounds in  (a), (b) and (c).  

\smallskip

{\bf (a). Hypergraphs and Simplicial Complexes.} Let $V$ be  a totally-ordered finite set  whose elements are called {\it vertices}. Let $2^V$ denote the power-set of $V$.  Let $\emptyset$ denote the empty set.  A {\it hypergraph}  $\mathcal{H}$  on  $V$  is a subset of  $2^V\setminus\{\emptyset\}$ (cf. \cite{berge,parks}).  
  An element of $\mathcal{H}$ is called a {\it hyperedge}.  For any $\sigma\in\mathcal{H}$,    if $\sigma$ consists of $n+1$ vertices in $V$, then way say that $\sigma$  is of  dimension $n$ and write $\sigma$ as $\sigma^{(n)}$.   

An {\it (abstract) simplicial complex}  is a  hypergraph satisfying the following  condition (cf. \cite[p. 107]{hatcher}, \cite[Section~1.3]{wu1}, \cite{h1}):
 for any $\sigma\in \mathcal{H}$ and any non-empty subset $\tau\subseteq \sigma$,    we  have that   $\tau$ must be a hyperedge in  $\mathcal{H}$. 
 A hyperedge  of a simplicial complex is called a {\it simplex}.  

\smallskip

  {\bf (b). The Usual Homology of Simplicial Complexes  and  The Embedded Homology of Hypergraphs. }The homology theory for simplicial complexes  (cf.  \cite[Chapter~2]{hatcher})  is  well-known in algebraic topology.  Let $\mathcal{K}$  be a simplicial complex.  Let $R$  be a commutative ring with unit. For each $n\geq 0$,  by taking all the (formal)  $R$-linear combinations of the $n$-simplices in $\mathcal{K}$,  we have a free $R$-module $C_n(\mathcal{K};R)$.   The $n$-th  boundary map 
\begin{eqnarray*}
\partial_n:  ~~~ C_n(\mathcal{K};R)\longrightarrow C_{n-1}(\mathcal{K};R)
\end{eqnarray*}
is an $R$-linear map  such  that  
\begin{eqnarray*}
\partial_n(v_0v_1\ldots v_n)=\sum_{i=0}^n (-1)^i   v_0\ldots \widehat{v_i}\ldots v_n
\end{eqnarray*}
for any $n$-simplex $v_0v_1\ldots v_n\in \mathcal{K}$  with  $v_0\prec v_1\prec\cdots\prec v_n$.  It  can be verified  that $\partial_n\circ \partial_{n+1}=0$  for each $n\geq 0$.   The  $n$-th homology  of $\mathcal{K}$,  with coefficients in $R$,  is defined to be the quotient $R$-module 
\begin{eqnarray*}
H_n(\mathcal{K};R)= {\rm Ker}\partial_n/{\rm Im}\partial_{n+1}.  
\end{eqnarray*}

In 2019,  S. Bressan, J. Li, S. Ren and J. Wu \cite{h1} defined  an embedded homology for hypergraphs  as  an algebraic generalization of the usual homology of simplicial complexes.  Let $\mathcal{H}$  be a hypergraph.   Let $\mathcal{K}_\mathcal{H}$  be  an arbitrary simplicial complex  such that each  hyperedge  of $\mathcal{H}$  is a simplex of $\mathcal{K}_\mathcal{H}$.   For  each  $n\geq  0$,   let $\partial_n$  be  the $n$-th boundary map of $\mathcal{K}_\mathcal{H}$.    Let  $R_n(\mathcal{H})$   be   the free $R$-module consisting of all the (formal)  $R$-linear combinations of the $n$-hyperedges in $\mathcal{H}$.   The {\it infimum chain complex} of $\mathcal{H}$   is  defined   as 
\begin{eqnarray*}
\text{Inf}_n(\mathcal{H})= R_n(\mathcal{H})\cap\partial_n^{-1}(R_{n-1}(\mathcal{H})),  \text{\ \ \ } n\geq 0  
\end{eqnarray*}
and the {\it supremum chain complex} of $\mathcal{H}$   is  defined   as 
\begin{eqnarray*}
\text{Sup}_n(\mathcal{H})= R_n(\mathcal{H})+\partial_n (R_{n+1}(\mathcal{H})),  \text{\ \ \ } n\geq 0.   
\end{eqnarray*}
 It  is proved in \cite{h1}  that the homology of the infimum chain complex and the homology of the supremum chain complex are isomorphic.  This homology is  called  the {\it embedded homology} of $\mathcal{H}$.   
Particularly,   if $\mathcal{H}$  is an (abstract)  simplicial complex,  then its  embedded homology coincides with the usual homology.

\smallskip

{\bf (c).  The Discrete Morse Theories  for  Simplicial Complexes, Graphs  and Digraphs. }  
 During  the  1990s and   the  2000s, R. Forman \cite{forman9} - \cite{forman3} has developed a discrete Morse theory for simplicial complexes\footnote[3]{In fact,  in   \cite{forman9} - \cite{forman3}, R. Forman has developed a discrete Morse theory for general cell complexes.  In particular,  the discrete Morse theory  in   \cite{forman9} - \cite{forman3}  is applicable for  simplicial complexes. }.    A  discrete Morse function  $f$ on a  simplicial complex  $\mathcal{K}$  was   defined by assigning a real number $f(\sigma)$ to each simplex $\sigma\in \mathcal{K}$ such that  for any $n\geq 0$ and any $n$-simplex $\alpha^{(n)}\in\mathcal{K}$,   there exist at most one $\beta^{(n+1)}>\alpha^{(n)}$   in  $\mathcal{K}$  with $f(\beta)\leq f(\alpha)$ and  at most one $\gamma^{(n-1)}<\alpha^{(n)}$  in $\mathcal{K}$   with $f(\gamma)\geq f(\alpha)$.    The  discrete gradient vector field   $\grad   f$   on   $\mathcal{K}$  was   defined by assigning arrows from $\alpha$ to $\beta$ whenever $\alpha^{(n)}<\beta^{(n+1)}$  and $f(\alpha)\geq f(\beta)$,  for any $n\geq 0$  and any  $\alpha^{(n)},\beta^{(n+1)}\in \mathcal{K}$.  
    The critical simplices were defined as   the simplices which are neither the heads nor the tails of any arrows in $\grad   f$. 
A  chain complex consisting of the formal linear combinations of the critical simplices was constructed  and   the homology of this   chain complex is   proved to be  isomorphic to the homology of the original simplicial complex.   The  Discrete Morse theory has applications in  both pure mathematics (cf. \cite{mams, alg2, algm,  cy})  and  data technologies (cf. \cite{app1, pers,  app3,  cy}).

Inspired by   the discrete Morse theory in \cite{forman9} - \cite{forman3},  people have studied the discrete Morse theories for graphs and digraphs.  During the 2000s,  R.  Ayala, L.M.  Fern\'{a}ndez, D. Fern\'{a}ndez-Ternero,  A.  Quintero   and  J.A.  Vilches   \cite{a1} - \cite{a4}  gave a discrete Morse theory  for graphs and studied related topics.   
In  2021,  Yong Lin,  Chong Wang and  Shing-Tung Yau \cite{lin}  gave a  discrete Morse theory for digraphs.

Throughout the discrete Morse theory for simplicial complexes in \cite{forman9} - \cite{forman3},  the discrete Morse theory for graphs in \cite{a1} - \cite{a4}  and  the discrete Morse theory for digraphs  in  \cite{lin},  the homology groups,   the discrete Morse functions, the discrete gradient vector fields, and the critical simplices     play  important and fundamental roles. 

\smallskip



\smallskip

Secondly,  we introduce the motivation  of this paper.   The  paper is  motivated  by  the following  question:
\begin{quote}
\it { \bf Question. }Whether a discrete Morse theory for  hypergraphs and their embedded homology can be developed  as a generalization of the discrete Morse theory for simplicial complexes  and their   homology  (cf. \cite{forman9} - \cite{forman3})? 
\end{quote}
In this paper,   we make a  first step  towards the answer.  As a generalization of the homology theory for simplicial complexes,  we    explore  the embedded homology of hypergraphs  in Section~\ref{s2} and study   the  homomorphisms between the embedded homology of  hypergraphs  in Theorem~\ref{pr-rev-axz}.  Moreover,  as a generalization of the discrete Morse theory for simplicial complexes,  we study the    discrete Morse functions on hypergraphs  as well as the corresponding discrete gradient vector fields and critical hyperedges,  from  Section~\ref{s3}   to  Section~\ref{s6}.

\smallskip

Thirdly, we  summarize  the outline  of this paper.  
Let $\mathcal{H}$    be a hypergraph.   In  Section~\ref{s2},  we  review the definition of  the associated simplicial complex $\Delta\mathcal{H}$  (cf.  \cite{parks})  which   the smallest  simplicial complex such that each hyperedge of $\mathcal{H}$  is a simplex of $\Delta\mathcal{H}$.    We  define the lower-associated simplicial complex $\delta\mathcal{H}$  to be the largest simplicial complex such that each simplex of $\delta\mathcal{H}$  is a   hyperedge of $\mathcal{H}$.  We  prove in Theorem~\ref{pr-rev-axz}  that  a morphism of hypergraphs induces  a homomorphism of the  embedded homology,  a  homomorphism   of the homology of the  associated simplicial complexes,   and  a  homomorphism of  the homology of the lower-associated simplicial complexes.   
From  Section~\ref{s3}   to  Section~\ref{s6},   we generalize the discrete Morse functions on simplicial complexes and define the discrete Morse functions on hypergraphs.   We study the discrete gradient vector fields  of the discrete Morse functions  on hypergraphs   as well as the  corresponding critical hyperedges.  We characterize the critical hyperedges in Theorem~\ref{co-4.a}.

    \section{The  Associated Simplicial Complexes  and The Embedded Homology  for Hypergraphs }\label{s2}
    
  In this section,  we review some definitions  as well as some  basic properties about hypergraphs,  their (lower-)associated simplicial complexes,  and the  embedded homology.     We  also give  some examples and show that  the homology of the associated simplicial complexes,  the homology of the lower-associated simplicial complexes,  and the embedded homology of the hypergraphs  detect the topology of hypergraphs from  different aspects.  
  
  \smallskip

 \subsection{Hypergraphs,  The  Associated Simplicial Complexes, and  The  Lower-Associated Simplicial Complexes}

Let $V$  be a finite set with a total order $\prec$.  

\begin{definition}
A  {\it hyperedge} $\sigma$  on $V$  is a non-empty subset of $V$. 
\end{definition}

For any hyperedge $\sigma$ on $V$, we  can write $\sigma$ uniquely  as a subset 
\begin{eqnarray}\label{notation}
\sigma=\{v_0,v_1,\ldots,v_n\}
\end{eqnarray}
of $V$;  or equivalently,   we 
can also write $\sigma$  uniquely  in the form  of a sequence  
\begin{eqnarray*}
\sigma=v_0v_1\ldots v_n
\end{eqnarray*}
for some $n\geq 0$ where $v_0,v_1,\ldots, v_n\in V$  and  $v_0\prec v_1\prec\cdots \prec v_n$.  Throughout this paper,  we adopt  the former notation in (\ref{notation}) for an $n$-simplex.  
\begin{definition}
\label{def-rev00}
We say that $\sigma$ given  by  (\ref{notation})  is an {\it $n$-hyperedge}  and call $n$ the {\it dimension} of $\sigma$. 
\end{definition}

\begin{definition}[cf. \cite{berge,h1,parks}]
\label{def-rev0}
A {\it hypergraph} $\mathcal{H}$  on $V$   is a collection of hyperedges on $V$.   
\end{definition}

\begin{definition}[cf. \cite{h1,hatcher}]
An {\it (abstract)  simplicial complex} $\mathcal{K}$  on $V$  is a hypergraph  on $V$  such that for any $\sigma\in \mathcal{K}$  and any non-empty subset $\tau\subseteq \sigma$,  we always have $\tau\in\mathcal{K}$.    
\end{definition}

A  hyperedge of a simplicial complex  is also called a {\it simplex}.

\begin{definition}\label{def-rev1}
    Let $\mathcal{H}$ and $\mathcal{H}'$  be two hypergraphs.  We   say that $\mathcal{H}$  can be embedded in $\mathcal{H}'$  and write $\mathcal{H}\subseteq \mathcal{H}'$  if for any hyperedge $\sigma\in \mathcal{H}$,  we always  have that $\sigma\in \mathcal{H}'$. 
    \end{definition}

    Let $\mathcal{H}$  be a hypergraph on $V$. 
    
\begin{definition}[cf. \cite{parks}]
 \label{def-rev2}
The {\it associated simplicial complex} $\Delta \mathcal{H}$ of $\mathcal{H}$ is the smallest simplicial complex that $\mathcal{H}$ can be embedded in. 
\end{definition}

   Let  $\sigma=v_0v_1\ldots v_n$  be  an $n$-hyperedge  on $V$.  The next    lemma  is  straight-forward  from Definition~\ref{def-rev0}  and Definition~\ref{def-rev2}.

\begin{lemma}{\rm (cf. \cite[Lemma~8]{parks})}.
\label{le-rev2.1}
The {\it associated simplicial complex} $\Delta\sigma$ of $\sigma$ is   the collection of all the nonempty subsets of $\sigma$
\begin{eqnarray}\label{eq-md1}
\Delta\sigma=\{ \{v_{i_0}, v_{i_1}, \ldots, v_{i_k}\} \mid 0\leq i_0<i_1<\cdots< i_k\leq n{\rm~and~} 0\leq k\leq n\}. 
\end{eqnarray}
\qed
    \end{lemma}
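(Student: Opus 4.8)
The plan is to show that the right-hand side of (\ref{eq-md1}), which I will denote by $S$, is simultaneously a simplicial complex containing $\sigma$ and a subset of \emph{every} simplicial complex containing $\sigma$. By the minimality clause in Definition~\ref{def-rev2}, these two facts together force $\Delta\sigma = S$.

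First I would verify that $S$ is genuinely a simplicial complex, not merely a hypergraph, since Definition~\ref{def-rev2} quantifies over simplicial complexes only. Take any $\tau \in S$ and any nonempty subset $\tau' \subseteq \tau$. Because $\tau$ is a nonempty subset of $\sigma$, we have $\tau' \subseteq \tau \subseteq \sigma$, so $\tau'$ is again a nonempty subset of $\sigma$ and hence $\tau' \in S$. Thus $S$ is closed under passing to nonempty subsets, which is exactly the defining closure property of a simplicial complex. Moreover, taking $k = n$ and $i_j = j$ in (\ref{eq-md1}) shows $\sigma \in S$, so the hypergraph $\{\sigma\}$ can be embedded in $S$ in the sense of Definition~\ref{def-rev1}.

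The remaining step carries the word \emph{smallest} from Definition~\ref{def-rev2}. Let $\mathcal{K}$ be an arbitrary simplicial complex with $\sigma \in \mathcal{K}$. By the closure property of $\mathcal{K}$, every nonempty subset of $\sigma$ lies in $\mathcal{K}$; since these are precisely the elements of $S$, we obtain $S \subseteq \mathcal{K}$. Therefore $S$ is contained in every simplicial complex into which $\{\sigma\}$ embeds, so $S$ is the smallest such complex and $\Delta\sigma = S$.

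I do not anticipate any genuine obstacle here: the statement is essentially a direct unwinding of the closure axiom for simplicial complexes together with the minimality clause in the definition of $\Delta$. The only point meriting a moment of care is the bookkeeping in the second step, namely confirming that $S$ itself satisfies the closure axiom before invoking minimality, since otherwise $S$ would not even be a candidate for the infimum in Definition~\ref{def-rev2}.
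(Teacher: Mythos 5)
Your proof is correct and matches the paper's intent: the paper states this lemma as ``straight-forward from Definition~\ref{def-rev0} and Definition~\ref{def-rev2}'' and omits the argument entirely, and what you have written is exactly the standard unwinding (the set of nonempty subsets of $\sigma$ is a simplicial complex containing $\sigma$, and the closure axiom forces it into every such complex, so minimality gives equality). No issues.
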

    
    The  next lemma  is straight-forward from Definition~\ref{def-rev1} and Definition~\ref{def-rev2}.

    \begin{lemma}\label{le-rev2.2}
    Let  $\mathcal{H}$  and  $\mathcal{H}'$   be two hypergraphs such that $\mathcal{H}\subseteq \mathcal{H}'$.  Then $\Delta\mathcal{H}\subseteq \Delta\mathcal{H}'$.  \qed
    \end{lemma}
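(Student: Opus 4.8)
The plan is to avoid manipulating individual simplices and instead invoke the minimality (universal) property built into Definition~\ref{def-rev2}. Recall that $\Delta\mathcal{H}$ is characterized as the \emph{smallest} simplicial complex in which $\mathcal{H}$ can be embedded. Hence, to prove $\Delta\mathcal{H}\subseteq\Delta\mathcal{H}'$, it suffices to exhibit $\Delta\mathcal{H}'$ as \emph{some} simplicial complex in which $\mathcal{H}$ can be embedded; minimality of $\Delta\mathcal{H}$ then forces the desired containment immediately.

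Concretely, I would proceed in three short steps. First, note that $\Delta\mathcal{H}'$ is a simplicial complex by its very construction in Definition~\ref{def-rev2}. Second, establish the chain $\mathcal{H}\subseteq\mathcal{H}'\subseteq\Delta\mathcal{H}'$: the first containment is exactly the hypothesis, while the second holds because every hyperedge of $\mathcal{H}'$ is, by definition, a simplex of its own associated complex $\Delta\mathcal{H}'$. Since the embedding relation $\subseteq$ of Definition~\ref{def-rev1} is ordinary set-theoretic containment of collections of hyperedges and is therefore transitive, these two facts combine to give $\mathcal{H}\subseteq\Delta\mathcal{H}'$. Third, apply the minimality clause of Definition~\ref{def-rev2} to conclude $\Delta\mathcal{H}\subseteq\Delta\mathcal{H}'$. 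As an alternative, one may argue elementwise via Lemma~\ref{le-rev2.1}: any $\tau\in\Delta\mathcal{H}$ is a non-empty subset of some $\sigma\in\mathcal{H}$, and since $\sigma\in\mathcal{H}'$ by hypothesis, $\tau$ is a non-empty subset of a hyperedge of $\mathcal{H}'$, whence $\tau\in\Delta\mathcal{H}'$.

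There is essentially no obstacle to overcome here; the lemma is \textbf{straight-forward}, as the surrounding text already flags. The only points requiring a sliver of care are the implicit transitivity of $\subseteq$ and the observation $\mathcal{H}'\subseteq\Delta\mathcal{H}'$, both of which are immediate consequences of the definitions. I would favour the universal-property route over the elementwise one, since it is shorter and makes the role of minimality explicit.
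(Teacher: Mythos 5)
Your proof is correct and matches the paper's intent: the paper gives no written argument, declaring the lemma straightforward from Definition~\ref{def-rev1} and Definition~\ref{def-rev2}, and your minimality argument (via $\mathcal{H}\subseteq\mathcal{H}'\subseteq\Delta\mathcal{H}'$ and the universal property of $\Delta\mathcal{H}$) is precisely the intended justification. The elementwise alternative you sketch is also sound and is consistent with how the paper later uses Lemma~\ref{le-rev2.1} and Lemma~\ref{le-2.cc}.
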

    
    The next lemma follows from the above. 
    
   \begin{lemma}\label{le-2.cc}
  For any hypergraph $\mathcal{H}$,  its associated simplicial complex  $\Delta\mathcal{H}$ has its set of simplices as the union of the $\Delta\sigma$'s for all $\sigma\in\mathcal{H}$,  i.e. 
\begin{eqnarray}\label{e10}
\Delta \mathcal{H}=\{\tau\in\Delta\sigma\mid \sigma\in \mathcal{H}
\}.
\end{eqnarray} 
\end{lemma}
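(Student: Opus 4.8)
The plan is to establish the set equality by double inclusion, writing the right-hand side as $U=\bigcup_{\sigma\in\mathcal{H}}\Delta\sigma$ and exploiting the minimality that characterizes $\Delta\mathcal{H}$ in Definition~\ref{def-rev2}. The two inclusions $U\subseteq\Delta\mathcal{H}$ and $\Delta\mathcal{H}\subseteq U$ are proved by complementary arguments: the first by monotonicity (Lemma~\ref{le-rev2.2}), the second by the universal property of the smallest simplicial complex.

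For the inclusion $U\subseteq\Delta\mathcal{H}$, I would argue one hyperedge at a time. Fix $\sigma\in\mathcal{H}$ and regard the singleton collection $\{\sigma\}$ as a hypergraph. Then $\{\sigma\}\subseteq\mathcal{H}$ in the sense of Definition~\ref{def-rev1}, since the only hyperedge of $\{\sigma\}$ lies in $\mathcal{H}$. Noting that the smallest simplicial complex containing the single simplex $\sigma$ is exactly the downward closure described in Lemma~\ref{le-rev2.1}, we have $\Delta\{\sigma\}=\Delta\sigma$, and Lemma~\ref{le-rev2.2} yields $\Delta\sigma=\Delta\{\sigma\}\subseteq\Delta\mathcal{H}$. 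Taking the union over all $\sigma\in\mathcal{H}$ gives $U\subseteq\Delta\mathcal{H}$.

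For the reverse inclusion $\Delta\mathcal{H}\subseteq U$, I would invoke minimality: since $\Delta\mathcal{H}$ is by definition the smallest simplicial complex containing $\mathcal{H}$, it suffices to check that $U$ is itself a simplicial complex containing $\mathcal{H}$, for then $\Delta\mathcal{H}\subseteq U$ automatically. Containment of $\mathcal{H}$ is immediate, because each $\sigma\in\mathcal{H}$ is a nonempty subset of itself and hence lies in $\Delta\sigma\subseteq U$ by Lemma~\ref{le-rev2.1}.

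The one point that requires genuine verification—and thus the main, if modest, obstacle—is that $U$ is closed under passing to nonempty subsets. Given $\tau\in U$ and a nonempty $\tau'\subseteq\tau$, I would select $\sigma\in\mathcal{H}$ with $\tau\in\Delta\sigma$; by Lemma~\ref{le-rev2.1} this means $\tau\subseteq\sigma$, so $\tau'\subseteq\tau\subseteq\sigma$ exhibits $\tau'$ as a nonempty subset of $\sigma$, whence $\tau'\in\Delta\sigma\subseteq U$. This confirms that $U$ is a simplicial complex, and minimality then gives $\Delta\mathcal{H}\subseteq U$. Combining the two inclusions yields the claimed equality $\Delta\mathcal{H}=U$.
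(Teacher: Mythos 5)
Your proof is correct and follows essentially the same two-step argument as the paper: the inclusion $\bigcup_{\sigma\in\mathcal{H}}\Delta\sigma\subseteq\Delta\mathcal{H}$ via Lemma~\ref{le-rev2.2} applied to the singleton hypergraphs $\{\sigma\}$, and the reverse inclusion via the minimality in Definition~\ref{def-rev2}. The only difference is that you explicitly verify that the union is closed under nonempty subsets, a point the paper asserts without detail.
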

\begin{proof}
Firstly,  the right-hand side of (\ref{e10})  is a simplicial complex whose the set of simplices contains all the hyperedges of $\mathcal{H}$.  Thus  by Definition~\ref{def-rev2},  
\begin{eqnarray}\label{e10aa}
\Delta \mathcal{H}\subseteq\{\tau\in\Delta\sigma\mid \sigma\in \mathcal{H}
\}.
\end{eqnarray}

Secondly,  we take any  hyperedge $\sigma\in\mathcal{H}$  and let $\{\sigma\}$  be the  hypergraph with the single hyperedge $\sigma$.   Since $\{\sigma\}\subseteq\mathcal{H}$,  it follows from Lemma~\ref{le-rev2.2} that $\Delta\sigma\subseteq \Delta\mathcal{H}$.  Letting $\sigma$ run over all the hyperedges in $\mathcal{H}$,  it follows that 
\begin{eqnarray}\label{e10bb}
\{\tau\in\Delta\sigma\mid \sigma\in \mathcal{H}\}&=&\cup_{\sigma\in\mathcal{H}} \Delta\sigma \nonumber
\\&\subseteq&\Delta \mathcal{H}.
\end{eqnarray}  

Summarizing (\ref{e10aa})  and (\ref{e10bb}), we  obtain (\ref{e10}).  
\end{proof}

\begin{definition}\label{def-rev2.3}
The {\it lower-associated simplicial complex} $\delta \mathcal{H}$ of $\mathcal{H}$  is  the largest simplicial complex that can be embedded in $\mathcal{H}$.
\end{definition}

The next lemma follows from Definition~\ref{def-rev2.3}.  

\begin{lemma}\label{le-2.dd}
 Then the set of simplices of $\delta\mathcal{H}$ consists of the hyperedges  $\sigma\in\mathcal{H}$ whose associated simplicial complexes $\Delta\sigma$ are subsets of $\mathcal{H}$.  In other words,  
\begin{eqnarray}\label{e11}
\delta\mathcal{H}&=&\{\sigma\in\mathcal{H}\mid \Delta\sigma\subseteq \mathcal{H}\}\nonumber\\
&=&\{\tau\in\Delta\sigma\mid \Delta\sigma\subseteq \mathcal{H}\}. 
\end{eqnarray}
\end{lemma}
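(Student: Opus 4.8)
The plan is to identify the set $S=\{\sigma\in\mathcal{H}\mid \Delta\sigma\subseteq\mathcal{H}\}$ with $\delta\mathcal{H}$ by checking the three things that the defining property in Definition~\ref{def-rev2.3} demands: that $S$ is a simplicial complex, that $S$ embeds in $\mathcal{H}$, and that $S$ is the largest simplicial complex doing so. Once $S=\delta\mathcal{H}$ is established, the second equality in (\ref{e11}) will follow by unwinding notation.

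First I would check that $S$ is a simplicial complex. Take $\sigma\in S$ and a nonempty $\tau\subseteq\sigma$. By Lemma~\ref{le-rev2.1} every nonempty subset of $\tau$ is a nonempty subset of $\sigma$, so $\Delta\tau\subseteq\Delta\sigma$; since $\Delta\sigma\subseteq\mathcal{H}$ by $\sigma\in S$, this yields both $\tau\in\Delta\sigma\subseteq\mathcal{H}$ and $\Delta\tau\subseteq\mathcal{H}$, hence $\tau\in S$. Thus $S$ is closed under passing to nonempty subsets, so it is a simplicial complex; and $S\subseteq\mathcal{H}$ holds by construction, so $S$ embeds in $\mathcal{H}$ in the sense of Definition~\ref{def-rev1}.

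Next I would prove maximality. Let $\mathcal{K}$ be any simplicial complex with $\mathcal{K}\subseteq\mathcal{H}$, and take $\sigma\in\mathcal{K}$. Because $\mathcal{K}$ is a simplicial complex, all nonempty subsets of $\sigma$ lie in $\mathcal{K}$, i.e.\ $\Delta\sigma\subseteq\mathcal{K}$; combined with $\mathcal{K}\subseteq\mathcal{H}$ this gives $\Delta\sigma\subseteq\mathcal{H}$, so $\sigma\in S$. Hence $\mathcal{K}\subseteq S$. Since $S$ is itself a simplicial complex embedded in $\mathcal{H}$ that contains every such $\mathcal{K}$, it is the largest one, so $S=\delta\mathcal{H}$ and the first equality in (\ref{e11}) holds.

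Finally the second equality follows by rewriting: the set $\{\tau\in\Delta\sigma\mid \Delta\sigma\subseteq\mathcal{H}\}$ gathers the faces of exactly those $\sigma$ with $\Delta\sigma\subseteq\mathcal{H}$. Every $\sigma\in S$ lies in its own $\Delta\sigma$ with $\Delta\sigma\subseteq\mathcal{H}$, and conversely any $\tau\in\Delta\sigma$ with $\Delta\sigma\subseteq\mathcal{H}$ satisfies $\tau\in\mathcal{H}$ and $\Delta\tau\subseteq\Delta\sigma\subseteq\mathcal{H}$, so $\tau\in S$; the two sets therefore coincide. I expect the only point that needs care to be the maximality step: the crux is recognizing that membership of a single simplex $\sigma$ in a simplicial complex already forces its whole face collection $\Delta\sigma$ to be present, which is precisely the condition cut out by $S$; everything else is bookkeeping built on Lemma~\ref{le-rev2.1}.
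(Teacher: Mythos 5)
Your proof is correct. It differs from the paper's in organization: the paper proves a cycle of three inclusions, namely $\{\tau\in\Delta\sigma\mid\Delta\sigma\subseteq\mathcal{H}\}\subseteq\delta\mathcal{H}$ (because each such $\Delta\sigma$ is a simplicial complex embedded in $\mathcal{H}$, hence contained in the largest one), then $\delta\mathcal{H}\subseteq\{\sigma\in\mathcal{H}\mid\Delta\sigma\subseteq\mathcal{H}\}$ (because $\delta\mathcal{H}$ is a simplicial complex inside $\mathcal{H}$), and finally the trivial inclusion $\{\sigma\in\mathcal{H}\mid\Delta\sigma\subseteq\mathcal{H}\}\subseteq\{\tau\in\Delta\sigma\mid\Delta\sigma\subseteq\mathcal{H}\}$ since $\sigma\in\Delta\sigma$. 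You instead exhibit the candidate set $S=\{\sigma\in\mathcal{H}\mid\Delta\sigma\subseteq\mathcal{H}\}$ and verify the universal property of Definition~\ref{def-rev2.3} directly: $S$ is a simplicial complex, $S\subseteq\mathcal{H}$, and every simplicial complex embedded in $\mathcal{H}$ is contained in $S$. The two arguments use the same two underlying facts (a simplicial complex containing $\sigma$ contains all of $\Delta\sigma$, and $\Delta\sigma$ is itself a simplicial complex), but your route buys something the paper's does not: it proves that a largest simplicial complex embedded in $\mathcal{H}$ actually exists, i.e.\ that Definition~\ref{def-rev2.3} is well posed, whereas the paper's argument presupposes this when it invokes maximality of $\delta\mathcal{H}$ in its first step. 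Your handling of the second equality in (\ref{e11}) also matches the paper's in substance.
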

\begin{proof}
Firstly,   for any $\sigma \in \mathcal{H}$,   if $\Delta\sigma\subseteq\mathcal{H}$,  then by Definition~\ref{def-rev2.3},  we  have 
\begin{eqnarray*}
\Delta\sigma\subseteq \delta\mathcal{H}. 
\end{eqnarray*}  
Letting $\sigma$ run over all the hyperedges of $\mathcal{H}$,  it follows that 
\begin{eqnarray*}
\{\tau\in\Delta\sigma\mid \Delta\sigma\subseteq \mathcal{H}\}&=& \cup_{\Delta\sigma\subseteq\mathcal{H}} \Delta\sigma \\
&\subseteq& \delta\mathcal{H}. 
\end{eqnarray*}

Secondly,  let $\sigma$  be any simplex of $\delta\mathcal{H}$.  Then by Definition~\ref{def-rev2.3},  we have $\Delta\sigma\subseteq \mathcal{H}$.  Letting $\sigma$ run over all the hyperedges of $\mathcal{H}$,  it follows that 
\begin{eqnarray*}
 \delta\mathcal{H}\subseteq\{\sigma\in\mathcal{H}\mid \Delta\sigma\subseteq \mathcal{H}\}. 
\end{eqnarray*}

Thirdly,  since $\sigma\in\Delta\sigma$,  we have
\begin{eqnarray*}
\{\sigma\in\mathcal{H}\mid \Delta\sigma\subseteq \mathcal{H}\} 
\subseteq \{\tau\in\Delta\sigma\mid \Delta\sigma\subseteq \mathcal{H}\}. 
\end{eqnarray*}

Summarizing all the three points,  we obtain (\ref{e11}).  
\end{proof}

The next proposition follows  from Lemma~\ref{le-2.cc}  and Lemma~\ref{le-2.dd}.  

\begin{proposition}\label{pr-2.ee}
As hypergraphs,  
\begin{eqnarray}\label{e1}
\delta \mathcal{H}\subseteq \mathcal{H}\subseteq\Delta\mathcal{H}.
\end{eqnarray}
Moreover,  one of the equalities holds iff. both of the equalities hold iff. $\mathcal{H}$ is a simplicial complex.   
\end{proposition}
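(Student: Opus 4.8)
The plan is to read the two inclusions directly off the preceding lemmas, and then to settle the \emph{moreover} clause by proving the three conditions equivalent through a short cycle of implications.

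For the inclusions in (\ref{e1}), I would obtain $\mathcal{H}\subseteq\Delta\mathcal{H}$ from Lemma~\ref{le-2.cc}: every hyperedge $\sigma\in\mathcal{H}$ satisfies $\sigma\in\Delta\sigma$ (this is the term $k=n$, $i_j=j$ in the description (\ref{eq-md1})), so $\sigma$ lies in the union $\cup_{\sigma\in\mathcal{H}}\Delta\sigma=\Delta\mathcal{H}$. Dually, $\delta\mathcal{H}\subseteq\mathcal{H}$ is immediate from the first line of (\ref{e11}) in Lemma~\ref{le-2.dd}, since $\delta\mathcal{H}=\{\sigma\in\mathcal{H}\mid\Delta\sigma\subseteq\mathcal{H}\}$ is by construction a subcollection of $\mathcal{H}$. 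Chaining these gives (\ref{e1}).

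For the equivalences, abbreviate the three conditions as (A) at least one of $\delta\mathcal{H}=\mathcal{H}$, $\mathcal{H}=\Delta\mathcal{H}$ holds; (B) both hold; (C) $\mathcal{H}$ is a simplicial complex. Then I would close the cycle (A)$\Rightarrow$(C)$\Rightarrow$(B)$\Rightarrow$(A). The implication (B)$\Rightarrow$(A) is trivial. For (A)$\Rightarrow$(C): if $\mathcal{H}=\Delta\mathcal{H}$ then $\mathcal{H}$ coincides with the simplicial complex $\Delta\mathcal{H}$ of Definition~\ref{def-rev2}, hence is one; and if $\delta\mathcal{H}=\mathcal{H}$ then $\mathcal{H}$ coincides with the simplicial complex $\delta\mathcal{H}$ of Definition~\ref{def-rev2.3}, hence is one. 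So either disjunct already forces $\mathcal{H}$ to be a simplicial complex.

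The conceptually central step is (C)$\Rightarrow$(B), which I would derive from the extremal (smallest/largest) clauses of Definitions~\ref{def-rev2} and \ref{def-rev2.3}, applied with $\mathcal{H}$ itself as the competitor. If $\mathcal{H}$ is a simplicial complex, then $\mathcal{H}$ is a simplicial complex into which $\mathcal{H}$ embeds, so minimality of $\Delta\mathcal{H}$ yields $\Delta\mathcal{H}\subseteq\mathcal{H}$, and with (\ref{e1}) this gives $\mathcal{H}=\Delta\mathcal{H}$; symmetrically $\mathcal{H}$ is a simplicial complex embeddable in $\mathcal{H}$, so maximality of $\delta\mathcal{H}$ yields $\mathcal{H}\subseteq\delta\mathcal{H}$, whence $\delta\mathcal{H}=\mathcal{H}$. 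I do not anticipate a genuine obstacle: the whole argument is formal, and the only point demanding care is to apply the ``smallest'' and ``largest'' clauses of the two definitions with $\mathcal{H}$ playing the role of the test simplicial complex, which is legitimate precisely because (C) assumes $\mathcal{H}$ is one.
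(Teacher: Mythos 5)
Your proposal is correct and follows essentially the same route as the paper: the inclusions are read off Lemmas~\ref{le-2.cc} and~\ref{le-2.dd}, and the equivalences reduce to the definitions of $\Delta\mathcal{H}$ and $\delta\mathcal{H}$. The only cosmetic difference is that you close a cycle of implications (and observe directly that $\Delta\mathcal{H}$ and $\delta\mathcal{H}$ are themselves simplicial complexes, which is slightly slicker), whereas the paper proves each equality separately equivalent to $\mathcal{H}$ being a simplicial complex by unpacking the two lemmas.
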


\begin{proof}
The relations (\ref{e1}) follow  from Definition~\ref{def-rev2} and Definition~\ref{def-rev2.3}  directly. 
 Alternatively,  (\ref{e1}) also  follow  from Lemma~\ref{le-2.cc}  and Lemma~\ref{le-2.dd}. 
  In the following,  we will  prove the second assertion.  We divide the proof into two steps. 

{\sc Step~1}.  $\delta \mathcal{H}= \mathcal{H}$ $\Longleftrightarrow$  $\mathcal{H}$ is a simplicial complex.   

($\Longleftarrow$):  Obvious. 

($\Longrightarrow$): Suppose $\delta\mathcal{H}=\mathcal{H}$.  Then by Lemma~\ref{le-2.dd},   it follows  that for any $\sigma\in\mathcal{H}$,   we always have $\Delta\sigma\subseteq \mathcal{H}$.   Hence $\mathcal{H}$  is a simplicial complex.

{\sc Step~2}.  $\Delta \mathcal{H}= \mathcal{H}$ $\Longleftrightarrow$  $\mathcal{H}$ is a simplicial complex.

($\Longleftarrow$): Obvious.

($\Longrightarrow$): Suppose $\Delta\mathcal{H}=\mathcal{H}$.  Then by Lemma~\ref{le-2.cc},   it follows  that for any $\sigma\in\mathcal{H}$ and any non-empty subset $\tau\subseteq\sigma$,  we always have $\tau\in \mathcal{H}$.   Hence $\mathcal{H}$  is a simplicial complex.  

Summarizing both Step~1 and Step~2,  we finish the proof.  
\end{proof}

\smallskip

\subsection{Chain Complexes  and Homology Groups  for Hypergraphs}

Let $R$  be a commutative ring with unit.   Recall that any simplicial complex $\mathcal{K}$  gives a chain complex $C_*(\mathcal{K};R)$.  Let  $\mathcal{H}$  be   any hypergraph  on $V$.   The associated simplicial complex $\Delta\mathcal{H}$  gives a chain complex $C_*(\Delta\mathcal{H};R)$.  We use 
\begin{eqnarray*}
\partial_n:~~~ C_n(\Delta\mathcal{H};R)\longrightarrow C_{n-1}(\Delta\mathcal{H};R), \text{\ \ } n=0,1,2,\ldots,
\end{eqnarray*}
 to denote the boundary maps of this chain complex.  For simplicity, we sometimes denote $\partial_n$ as $\partial$ and omit the dimension $n$.

By Definition~\ref{def-rev2},  Definition~\ref{def-rev2.3}  and Proposition~\ref{pr-2.ee},  the lower-associated simplicial complex $\delta\mathcal{H}$  is a simplicial sub-complex of $\Delta\mathcal{H}$.   Consequently, $\delta\mathcal{H}$  gives a sub-chain complex $C_*(\delta\mathcal{H};R)$  of $C_*(\Delta\mathcal{H};R)$.  
The boundary map of $C_*(\delta\mathcal{H};R)$  is the restriction of $\partial_*$ to $C_*(\delta\mathcal{H};R)$,  i.e. 
\begin{eqnarray*}
\partial_n\mid_{C_n(\delta\mathcal{H};R)}: ~~~C_n(\delta\mathcal{H};R)\longrightarrow C_{n-1}(\delta\mathcal{H};R), \text{\ \ }n=0,1,2,\ldots. 
\end{eqnarray*}

Let $D_*$ be a graded sub-$R$-module of $C_*(\Delta\mathcal{H};R)$. 

\begin{definition}{\rm (cf. \cite[Section~2]{h1})}.
The {\it infimum chain complex} $\{\text{Inf}_n(D_*,C_*(\Delta\mathcal{H};R))\}_{n\geq 0}$    is  the largest sub-chain complex of $C_*(\Delta\mathcal{H};R)$ contained in $D_*$ as graded $R$-modules.  
\end{definition}

\begin{lemma} {\rm (cf. \cite[Section~2]{h1})}.
The infimum chain complex  can be expressed explicitly as 
\begin{eqnarray*}
{\rm  Inf}_n(D_*,C_*(\Delta\mathcal{H};R))= D_n\cap\partial_n^{-1}(D_{n-1}),  \text{\ \ \ } n\geq 0.  
\end{eqnarray*}
\qed 
\end{lemma}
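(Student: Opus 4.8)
The plan is to prove both inclusions $\mathrm{Inf}_n(D_*,C_*(\Delta\mathcal{H};R)) = D_n \cap \partial_n^{-1}(D_{n-1})$ by characterizing the largest sub-chain complex contained in $D_*$. First I would fix notation: write $I_n := D_n \cap \partial_n^{-1}(D_{n-1})$ for the candidate formula, and let $\{J_n\}_{n\geq 0}$ denote the infimum chain complex, i.e.\ the largest graded sub-$R$-module of $D_*$ that is closed under $\partial$.

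The first step is to verify that $\{I_n\}_{n\geq 0}$ is in fact a sub-chain complex of $C_*(\Delta\mathcal{H};R)$ contained in $D_*$. Containment in $D_*$ is immediate since $I_n \subseteq D_n$. For the chain-complex property I would take any $x \in I_n$; by definition $x \in D_n$ and $\partial_n(x) \in D_{n-1}$, so I must check that $\partial_n(x) \in I_{n-1} = D_{n-1} \cap \partial_{n-1}^{-1}(D_{n-2})$. The membership $\partial_n(x) \in D_{n-1}$ holds by construction, and $\partial_{n-1}(\partial_n(x)) = 0 \in D_{n-2}$ because $\partial_{n-1}\partial_n = 0$, so indeed $\partial_n(x) \in I_{n-1}$. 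Hence $\partial_n(I_n) \subseteq I_{n-1}$ and $\{I_n\}$ is a sub-chain complex of $D_*$.

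The second step is to show $I_n$ is the \emph{largest} such, which gives $\{J_n\} = \{I_n\}$ and hence the formula. Since $\{J_n\}$ is a sub-chain complex contained in $D_*$, I have $J_n \subseteq D_n$ and $\partial_n(J_n) \subseteq J_{n-1} \subseteq D_{n-1}$, so every $x \in J_n$ satisfies $x \in D_n$ and $\partial_n(x) \in D_{n-1}$, i.e.\ $x \in \partial_n^{-1}(D_{n-1})$; thus $J_n \subseteq I_n$. Combined with maximality of $J_n$ and the fact that $\{I_n\}$ is a sub-chain complex in $D_*$ (from Step~1), which forces $I_n \subseteq J_n$, I conclude $I_n = J_n$.

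I expect the only subtle point to be the bookkeeping in Step~1, namely confirming that an element of $I_n$ really maps \emph{into} $I_{n-1}$ rather than merely into $D_{n-1}$; the resolution is the observation that the extra condition $\partial_{n-1}(\partial_n(x)) \in D_{n-2}$ is automatic from $\partial^2 = 0$, so no genuine obstruction arises. The argument is otherwise a direct unwinding of the definition of ``largest sub-chain complex,'' and everything reduces to the two inclusions above together with the boundary identity $\partial\circ\partial = 0$.
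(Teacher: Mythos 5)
Your proof is correct and complete: you verify that $D_n\cap\partial_n^{-1}(D_{n-1})$ is itself a sub-chain complex of $C_*(\Delta\mathcal{H};R)$ contained in $D_*$ (the only point needing an argument being that $\partial_n x$ lands in $\partial_{n-1}^{-1}(D_{n-2})$, which is automatic from $\partial_{n-1}\partial_n=0$), and then that any sub-chain complex contained in $D_*$ sits inside it, which simultaneously establishes existence of the largest such sub-chain complex and identifies it with the stated formula. The paper supplies no proof of this lemma, only a citation to Section~2 of Bressan--Li--Ren--Wu, so there is nothing to compare against; your argument is the standard one that the cited source relies on.
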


\begin{definition}{\rm (cf. \cite[Section~2]{h1})}.
The {\it supremum chain complex} $\{\text{Sup}_n(D_*,C_*(\Delta\mathcal{H};R))\}_{n\geq 0}$   is the smallest sub-chain complex of $C_*(\Delta\mathcal{H};R)$ containing $D_*$ as graded $R$-modules.  \end{definition}

\begin{lemma}{\rm (cf.  \cite[Section~2]{h1})}.
  The supremum chain complex   can be expressed explicitly as 
\begin{eqnarray*}
{\rm Sup}_n(D_*,C_*(\Delta\mathcal{H};R))= D_n+\partial_{n+1}(D_{n+1}), \text{\ \ \ }n\geq 0,
\end{eqnarray*}
\qed
\end{lemma}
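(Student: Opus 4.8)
The plan is to verify directly that the explicitly-given graded module $S_n := D_n + \partial_{n+1}(D_{n+1})$ satisfies the three properties that characterize the supremum chain complex: it is a sub-chain complex of $C_*(\Delta\mathcal{H};R)$, it contains $D_*$ as graded $R$-modules, and it is the smallest such. Since the intersection of any family of sub-chain complexes containing $D_*$ is again a sub-chain complex containing $D_*$, the smallest one exists; so it suffices to exhibit $S_*$ as a sub-chain complex containing $D_*$ and then show it sits inside every competitor, which will force equality with the minimum.

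First I would check that $S_*$ is a genuine sub-chain complex. Because $D_{n+1}\subseteq C_{n+1}(\Delta\mathcal{H};R)$, its image $\partial_{n+1}(D_{n+1})$ lands in $C_n(\Delta\mathcal{H};R)$, so each $S_n$ is a sub-$R$-module of $C_n(\Delta\mathcal{H};R)$, and $S_*$ is graded by construction. The essential point is closure under the boundary map, where the identity $\partial_n\circ\partial_{n+1}=0$ does all the work:
\[
\partial_n(S_n)=\partial_n(D_n)+\partial_n\partial_{n+1}(D_{n+1})=\partial_n(D_n)\subseteq D_{n-1}+\partial_n(D_n)=S_{n-1}.
\]
Containment of $D_*$ is then immediate, since $D_n\subseteq D_n+\partial_{n+1}(D_{n+1})=S_n$ for every $n\geq 0$.

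Next I would establish minimality. Let $E_*$ be any sub-chain complex of $C_*(\Delta\mathcal{H};R)$ with $D_n\subseteq E_n$ for all $n$. From $D_{n+1}\subseteq E_{n+1}$ and the fact that $E_*$ is closed under $\partial$, I obtain
\[
\partial_{n+1}(D_{n+1})\subseteq\partial_{n+1}(E_{n+1})\subseteq E_n,
\]
and combining this with $D_n\subseteq E_n$ yields $S_n=D_n+\partial_{n+1}(D_{n+1})\subseteq E_n$. Hence $S_*$ is contained in every sub-chain complex containing $D_*$, so it coincides with the smallest one, namely ${\rm Sup}_*(D_*,C_*(\Delta\mathcal{H};R))$.

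Honestly, there is no serious obstacle here: the argument is a formal module-theoretic computation, entirely dual to the infimum case $D_n\cap\partial_n^{-1}(D_{n-1})$ already recorded above. The only point requiring the slightest care is the verification of closure under $\partial$, where one must invoke $\partial_n\partial_{n+1}=0$ to discard the extra term; once that is in place, the containment and minimality steps are routine. I would also remark that the computation does not use anything special about $C_*(\Delta\mathcal{H};R)$ beyond its being a chain complex of $R$-modules, so the same proof applies verbatim to an arbitrary ambient chain complex and its graded sub-module $D_*$.
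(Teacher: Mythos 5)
Your proof is correct, and it is the standard argument: the paper itself states this lemma without proof (it is quoted from \cite[Section~2]{h1} and marked \qed immediately), so there is nothing in the text to diverge from. Your three-step verification --- closure under $\partial$ via $\partial_n\circ\partial_{n+1}=0$, containment of $D_*$, and minimality by comparison with an arbitrary sub-chain complex $E_*$ containing $D_*$ --- is exactly the computation the citation is standing in for, and your closing remark that nothing particular to $C_*(\Delta\mathcal{H};R)$ is used matches the level of generality in \cite{h1}.
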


It is obvious that 
 as chain complexes,    we have  
\begin{eqnarray}
&&\{\text{Inf}_n(D_*,C_*(\Delta\mathcal{H};R)), \partial_n\mid_{\text{Inf}_n(D_*,C_*(\Delta\mathcal{H};R))}\}_{n\geq 0}\nonumber\\
&\subseteq& \{\text{Sup}_n(D_*,C_*(\Delta\mathcal{H};R)), \partial_n\mid_{\text{Sup}_n(D_*,C_*(\Delta\mathcal{H};R))}\}_{n\geq 0} \nonumber\\
&\subseteq& \{C_n(\Delta\mathcal{H};R), \partial_n\}_{n\geq 0}. 
\label{eq-2.99}
\end{eqnarray}
Consider  the canonical inclusion 
\begin{eqnarray*}
\iota: ~~~\text{Inf}_n(D_*,C_*(\Delta\mathcal{H};R))\longrightarrow \text{Sup}_n(D_*,C_*(\Delta\mathcal{H};R)), \text{\ \ \ }n\geq 0.  
\end{eqnarray*}
We  have the next lemma. 
\begin{lemma}\label{le-rev8}
The canonical inclusion  $\iota$ induces an isomorphism of homology groups
\begin{eqnarray}\label{eq-0.1}
\iota_*: &&H_*(\{{\rm Inf}_n(D_*,C_*(\Delta\mathcal{H};R)), \partial_n\mid_{{\rm Inf}_n(D_*)}\}_{n\geq 0})\nonumber\\
&&\overset{\cong}{\longrightarrow}H_*(\{{\rm Sup}_n(D_*,C_*(\Delta\mathcal{H};R)), \partial_n\mid_{{\rm Sup}_n(D_*)}\}_{n\geq 0}). 
\end{eqnarray}
\end{lemma}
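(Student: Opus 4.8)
The plan is to reduce both sides of the isomorphism (\ref{eq-0.1}) to one and the same explicit quotient of sub-$R$-modules of $C_n(\Delta\mathcal{H};R)$, and then to check that the identification is actually the map induced by $\iota$. Throughout, write $Z_n = \ker(\partial_n\colon C_n(\Delta\mathcal{H};R)\to C_{n-1}(\Delta\mathcal{H};R))$ for the cycles of the ambient complex, and recall the explicit descriptions $\text{Inf}_n = D_n\cap\partial_n^{-1}(D_{n-1})$ and $\text{Sup}_n = D_n + \partial_{n+1}(D_{n+1})$.

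First I would compute the cycles and boundaries of the infimum complex. A chain $x\in\text{Inf}_n$ is a cycle iff $\partial_n x = 0$, in which case the defining condition $\partial_n x\in D_{n-1}$ is automatic, so the cycles are exactly $D_n\cap Z_n$. For the boundaries, $z\in\partial_{n+1}(\text{Inf}_{n+1})$ means $z=\partial_{n+1}y$ with $y\in D_{n+1}$ and $\partial_{n+1}y\in D_n$; since $z=\partial_{n+1}y$ already forces $z\in\partial_{n+1}(D_{n+1})$, the boundaries are $D_n\cap\partial_{n+1}(D_{n+1})$. Hence
\begin{eqnarray*}
H_n(\text{Inf}) = \frac{D_n\cap Z_n}{D_n\cap\partial_{n+1}(D_{n+1})}.
\end{eqnarray*}

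Next I would do the same for the supremum complex. Since $\partial_{n+1}\partial_{n+2}=0$, the boundaries are $\partial_{n+1}(\text{Sup}_{n+1}) = \partial_{n+1}\big(D_{n+1}+\partial_{n+2}(D_{n+2})\big) = \partial_{n+1}(D_{n+1})$. For the cycles, the crucial observation is that $\partial_{n+1}(D_{n+1})\subseteq Z_n$ (again by $\partial^2=0$), so the modular (Dedekind) law applies and gives
\begin{eqnarray*}
\big(D_n+\partial_{n+1}(D_{n+1})\big)\cap Z_n = (D_n\cap Z_n)+\partial_{n+1}(D_{n+1}).
\end{eqnarray*}
The second isomorphism theorem, together with the identity $(D_n\cap Z_n)\cap\partial_{n+1}(D_{n+1}) = D_n\cap\partial_{n+1}(D_{n+1})$ (using once more $\partial_{n+1}(D_{n+1})\subseteq Z_n$), then yields
\begin{eqnarray*}
H_n(\text{Sup}) = \frac{(D_n\cap Z_n)+\partial_{n+1}(D_{n+1})}{\partial_{n+1}(D_{n+1})} \cong \frac{D_n\cap Z_n}{D_n\cap\partial_{n+1}(D_{n+1})}.
\end{eqnarray*}

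Finally I would verify that this abstract identification is genuinely realized by $\iota_*$. A cycle of $\text{Inf}_n$ is some $x\in D_n\cap Z_n$, and $\iota$ sends its class to the class of the same $x$ in $H_n(\text{Sup})$; surjectivity then follows because every supremum-cycle $x+\partial_{n+1}e$ is homologous in $\text{Sup}$ to $x\in D_n\cap Z_n$, and injectivity follows because $\iota_*[x]=0$ forces $x\in\partial_{n+1}(D_{n+1})\cap D_n$, which is precisely the infimum-boundary module. I expect the only genuine subtlety to be the cycle computation for $\text{Sup}$: one must first invoke $\partial^2=0$ to place $\partial_{n+1}(D_{n+1})$ inside $Z_n$ before the modular law can split the intersection, and it is the same containment that makes the two intersection terms coincide. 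Everything else is routine module bookkeeping, and in particular no acyclicity or contractibility hypothesis on $D_*$ is required.
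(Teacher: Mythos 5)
Your proposal is correct and follows essentially the same route as the paper's proof: both compute the cycles and boundaries of the infimum and supremum complexes explicitly (your four identities are exactly the ones the paper records, with $D_k\cap Z_k=\mathrm{Ker}(\partial_k|_{D_k})$), and both conclude via the second isomorphism theorem that the inclusion-induced map on the resulting quotients is an isomorphism. The only difference is presentational: you make the Dedekind modular law step explicit where the paper defers it to a citation of \cite[Proposition~2.4]{h1} or ``a direct calculation.''
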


\begin{proof}
Let $k\geq 0$  be an integer.  By the definition of homology groups, we have 
\begin{eqnarray*}
&& H_k(\text{Sup}_*(D_*,C_*(\Delta\mathcal{H};R)))\\
 &=&\text{Ker}(\partial_k|_{D_k+\partial_{k+1}D_{k+1}})/\text{Im}(\partial_{k+1}|_{D_{k+1}+\partial_{k+2} D_{k+2}})  
 \end{eqnarray*}
 and 
 \begin{eqnarray*}
 &&H_k(\text{Inf}_*(D_*,C_*(\Delta\mathcal{H};R)))\\
 &=&\text{Ker}(\partial_k |_{D_k\cap\partial_k^{-1}D_{k-1}}) /\text{Im} (\partial_{k+1} |_{D_{k+1}\cap\partial_{k+1}^{-1}D_{k}})
 \end{eqnarray*}
Moreover,   by the proof of \cite[Proposition~2.4]{h1} or by  a direct calculation,  
we  have 
 \begin{eqnarray*}
 \text{Ker}(\partial_k|_{D_k+\partial_{k+1}D_{k+1}})&=& \partial_{k+1}D_{k+1}+\text{Ker}(\partial_k|_{D_k}),\\
 \text{Im}(\partial_{k+1}|_{D_{k+1}+\partial_{k+2} D_{k+2}})&=&\partial_{k+1} D_{k+1}, \\
\text{Ker}(\partial_k |_{D_k\cap\partial_k^{-1}D_{k-1}})&=&\text{Ker}(\partial_k|_{D_k}),\\
\text{Im} (\partial_{k+1} |_{D_{k+1}\cap\partial_{k+1}^{-1}D_{k}}) &=&\text{Ker}(\partial_n|_{D_k})\cap \partial_{k+1} D_{k+1}. 
\end{eqnarray*}
For each $k\geq 0$,  the canonical inclusion $\iota$  induces a homomorphism 
\begin{eqnarray*} 
\iota_*:  &&H_k(\{\text{Inf}_n(D_*,C_*(\Delta\mathcal{H};R)), \partial_n\mid_{\text{Inf}_n(D_*)}\}_{n\geq 0})\nonumber\\
&&  \longrightarrow H_k(\{\text{Sup}_n(D_*,C_*(\Delta\mathcal{H};R)), \partial_n\mid_{\text{Sup}_n(D_*)}\}_{n\geq 0}) 
\end{eqnarray*}
 of homology groups.  Precisely,  $\iota_*$  sends an element 
 \begin{eqnarray*}
d_k +\text{Ker}(\partial_n|_{D_k})\cap \partial_{k+1} D_{k+1}, 
 \end{eqnarray*} 
 where  $d_k\in D_k$  such that  $\partial_k d_k=0$,  
 in  the quotient $R$-module 
 \begin{eqnarray*}
 \text{Ker}(\partial_k|_{D_k})/\big(\text{Ker}(\partial_n|_{D_k})\cap \partial_{k+1} D_{k+1}\big)
 \end{eqnarray*}
 to the element 
 \begin{eqnarray*}
 d_k +\partial_{k+1} D_{k+1}   
 \end{eqnarray*} 
in  the quotient $R$-module 
 \begin{eqnarray*}
 \big(\partial_{k+1}D_{k+1}+\text{Ker}(\partial_k|_{D_k})\big)/\partial_{k+1} D_{k+1}.  
 \end{eqnarray*}
 By the isomorphism theorem of modules,  we  have that $\iota_*$  is an isomorphism.  
\end{proof}

\begin{remark}
In \cite[Proposition~2.4]{h1},  it is proved that for each $k\geq 0$,  the homology groups  
\begin{eqnarray*}
H_k(\{\text{Inf}_n(D_*,C_*(\Delta\mathcal{H};R)), \partial_n\mid_{\text{Inf}_n(D_*)}\}_{n\geq 0})
\end{eqnarray*}
  and 
\begin{eqnarray*}
H_*(\{\text{Sup}_n(D_*,C_*(\Delta\mathcal{H};R)), \partial_n\mid_{\text{Sup}_n(D_*)}\}_{n\geq 0})
\end{eqnarray*}
  are isomorphic.  Here in Lemma\ref{le-rev8},   we strenthen  \cite[Proposition~2.4]{h1}  and prove that the canonical inclusion induces such an isomorphism.  
\end{remark}

With the help of Lemma~\ref{le-rev8},  the {\it embedded homology groups} of $D_*$  can be  defined:  

\begin{definition} {\rm (cf. \cite[Section~2]{h1})}.   
\label{def-rev321}
We call the homology groups in (\ref{eq-0.1}) the {\it embedded homology} of $D_*$   and denote the homology groups as $H_n(D_*,C_*(\Delta\mathcal{H};R))$, $n\geq 0$.  
\end{definition}

For each $n\geq 0$,  we use  $R(\mathcal{H})_n$  to denote the free $R$-module generated by all the $n$-hyperedges of $\mathcal{H}$.  We consider the specific graded  sub-$R$-module  
\begin{eqnarray*}
D_n= R(\mathcal{H})_n, \text{\ \ \ }n\geq 0  
\end{eqnarray*} 
of $C_*(\Delta\mathcal{H};R)$.   As  a special case  of  Definition~\ref{def-rev321},  the {\it embedded homology groups} of $\mathcal{H}$  can be  defined:
\begin{definition}
  {\rm (cf. \cite[Subsection~3.2]{h1})}.
  We call the homology groups 
  \begin{eqnarray*}
H_n(\mathcal{H};R)= H_n(R(\mathcal{H})_*,C_*(\Delta\mathcal{H};R)), \text{\ \ \ }n\geq 0 
\end{eqnarray*}
 the {\it embedded homology} of $\mathcal{H}$ with coefficients in $R$  and denote the homology groups as $H_n(\mathcal{H};R)$, $n\geq 0$.  
 \end{definition}
 
 \smallskip

 \subsection{Homomorphisms Among The Homology Groups for Hypergraphs}

 We  have the next lemma. 
 
 \begin{lemma}\label{le-rev-aza}
 The canonical inclusions 
 \begin{eqnarray*}
 \delta\mathcal{H}\overset{\iota^\delta}{\longrightarrow} \mathcal{H}\overset{\iota^\Delta}{\longrightarrow}\Delta\mathcal{H}
 \end{eqnarray*}
 induce homomorphisms 
  \begin{eqnarray*}
 H_*(\delta\mathcal{H};R)\overset{(\iota^\delta)_*}{\longrightarrow} H_*(\mathcal{H};R)\overset{(\iota^\Delta)_*}{\longrightarrow}H_*(\Delta\mathcal{H};R)
 \end{eqnarray*}
 of homology groups.  
 \end{lemma}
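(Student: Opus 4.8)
The plan is to realize all three homology groups as the homologies of sub-chain complexes of the single ambient chain complex $C_*(\Delta\mathcal{H};R)$, so that both homomorphisms arise from honest inclusions of chain complexes. I would fix the infimum model throughout: by Definition~\ref{def-rev321} the embedded homology $H_*(\mathcal{H};R)$ is the homology of $\mathrm{Inf}_*(R(\mathcal{H})_*,C_*(\Delta\mathcal{H};R))$, which is by construction a sub-chain complex of $C_*(\Delta\mathcal{H};R)$. The underlying observation I would record first is that for any simplicial complex $\mathcal{K}$ one has $\Delta\mathcal{K}=\mathcal{K}$ and $R(\mathcal{K})_n=C_n(\mathcal{K};R)$, with $\partial_n$ preserving $C_*(\mathcal{K};R)$; hence $\mathrm{Inf}_*(R(\mathcal{K})_*,C_*(\mathcal{K};R))=C_*(\mathcal{K};R)$ and the embedded homology of $\mathcal{K}$ is its ordinary simplicial homology. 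Applying this to the simplicial complexes $\delta\mathcal{H}$ and $\Delta\mathcal{H}$ identifies $H_*(\delta\mathcal{H};R)$ and $H_*(\Delta\mathcal{H};R)$ with the usual homologies of $C_*(\delta\mathcal{H};R)$ and $C_*(\Delta\mathcal{H};R)$.

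For $(\iota^\Delta)_*$ I would simply use that the inclusion $\mathrm{Inf}_*(R(\mathcal{H})_*,C_*(\Delta\mathcal{H};R))\hookrightarrow C_*(\Delta\mathcal{H};R)$ is an inclusion of chain complexes — this is precisely the first containment in (\ref{eq-2.99}) — and apply functoriality of homology to obtain $(\iota^\Delta)_*\colon H_*(\mathcal{H};R)\to H_*(\Delta\mathcal{H};R)$. For $(\iota^\delta)_*$ the key step is the chain-level containment $C_*(\delta\mathcal{H};R)\subseteq\mathrm{Inf}_*(R(\mathcal{H})_*,C_*(\Delta\mathcal{H};R))$. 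By Proposition~\ref{pr-2.ee} we have $\delta\mathcal{H}\subseteq\mathcal{H}$, so each $n$-simplex of $\delta\mathcal{H}$ is an $n$-hyperedge of $\mathcal{H}$ and thus $C_n(\delta\mathcal{H};R)\subseteq R(\mathcal{H})_n$. Since $\delta\mathcal{H}$ is a simplicial complex, $\partial_n$ carries $C_n(\delta\mathcal{H};R)$ into $C_{n-1}(\delta\mathcal{H};R)\subseteq R(\mathcal{H})_{n-1}$, so $C_n(\delta\mathcal{H};R)\subseteq\partial_n^{-1}(R(\mathcal{H})_{n-1})$; intersecting the two gives $C_n(\delta\mathcal{H};R)\subseteq\mathrm{Inf}_n(R(\mathcal{H})_*,C_*(\Delta\mathcal{H};R))$. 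As both boundary operators are restrictions of $\partial_n$, this is an inclusion of chain complexes, and functoriality yields $(\iota^\delta)_*\colon H_*(\delta\mathcal{H};R)\to H_*(\mathcal{H};R)$.

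I do not expect a serious obstacle here: once everything is placed inside $C_*(\Delta\mathcal{H};R)$, both maps are induced by chain-complex inclusions. The only point requiring care is the ambiguity in the embedded homology coming from the Inf/Sup description of Lemma~\ref{le-rev8}; I avoid it by committing to the infimum model for $H_*(\mathcal{H};R)$ from the outset, so that the two inclusions $C_*(\delta\mathcal{H};R)\subseteq\mathrm{Inf}_*(R(\mathcal{H})_*,C_*(\Delta\mathcal{H};R))\subseteq C_*(\Delta\mathcal{H};R)$ directly produce the composable pair of homomorphisms, with no need to reconcile different models.
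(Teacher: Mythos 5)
Your proposal is correct and follows essentially the same route as the paper: both realize $C_*(\delta\mathcal{H};R)$, the embedded chain model of $\mathcal{H}$, and $C_*(\Delta\mathcal{H};R)$ as a nested chain of sub-chain complexes of $C_*(\Delta\mathcal{H};R)$ and invoke functoriality of homology for chain inclusions. The only cosmetic difference is that the paper factors the second map through $\mathrm{Sup}_*(R(\mathcal{H})_*)$ while you use the inclusion $\mathrm{Inf}_*(R(\mathcal{H})_*)\subseteq C_*(\Delta\mathcal{H};R)$ directly; by Lemma~\ref{le-rev8} these give the same homomorphism, so the arguments coincide in substance.
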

 \begin{proof}
 As chain complexes,     we  have the inclusions 
\begin{eqnarray*}
&~&\{C_n(\delta\mathcal{H};R), \partial_n\mid_{C_n(\delta\mathcal{H};R)}\}_{n\geq 0}\\
&\subseteq&\{\text{Inf}_n(R(\mathcal{H})_*), \partial_n\mid_{\text{Inf}_n(R(\mathcal{H})_*)}\}_{n\geq 0}\\
&\subseteq& \{\text{Sup}_n(R(\mathcal{H})_*), \partial_n\mid_{\text{Sup}_n(R(\mathcal{H})_*)}\}_{n\geq 0}  \\
&\subseteq& \{C_n(\Delta\mathcal{H};R), \partial_n\}_{n\geq 0}. 
\end{eqnarray*}
The inclusion $\iota^\delta$  of hypergraphs induces an inclusion 
\begin{eqnarray*}
(\iota^\delta)_\#:~~~   C_n(\delta\mathcal{H};R)  \longrightarrow  \text{Inf}_n(R(\mathcal{H})_*),~~~~~~n\geq 0 
\end{eqnarray*}
of chain complexes,  which induces a homomorphism $(\iota^\delta)_*$   of  the  homology groups.  The inclusion $\iota^\Delta$  of hypergraphs induces an inclusion 
\begin{eqnarray*}
(\iota^\Delta)_\#:~~~    \text{Sup}_n(R(\mathcal{H})_*)  \longrightarrow C_n(\Delta\mathcal{H};R)   ~~~~~~n\geq 0 
\end{eqnarray*}
of chain complexes,  which induces a homomorphism $(\iota^\Delta)_*$   of the  homology groups.   
 \end{proof}

Let $V$  and $V'$ be two totally-ordered finite sets.  Let $\mathcal{H}$ be  a hypergraph on $V$  and $\mathcal{H}'$ be  a  hypergraph  on $V'$. 
\begin{definition}{\rm (cf. \cite[Subsection~3.1]{h1})}.
A {\it morphism} of hypergraphs from $\mathcal{H}$ to $\mathcal{H}'$ is  a map $\varphi: V\longrightarrow V' $ (i.e. $\varphi$  is a  map sending a vertex of $\mathcal{H}$ to a vertex of $\mathcal{H}'$) such that whenever $\sigma=\{ v_0, v_1,\ldots, v_k \}$ is a $k$-hyperedge of $\mathcal{H}$,   we  always have that 
\begin{eqnarray}\label{eq-zxs}
\varphi(\sigma) =  \{\varphi(v_0), \varphi(v_1), \ldots,  \varphi(v_k)\}
\end{eqnarray}
  is an $l$-hyperedge of $\mathcal{H}'$  for some $0\leq l\leq k$,  where $l$  is the number of distinct vertices among   $\varphi(v_0)$, $\varphi(v_1)$, $\ldots$,  $\varphi(v_k)$.  
\end{definition}

Let $\varphi:\mathcal{H}\longrightarrow\mathcal{H}'$ be  a morphism  of hypergraphs. 
 By an  argument similar to \cite[Section~3.1]{h1},  we  have the next lemma.  

\begin{lemma}\label{le-bvc}
A   morphism 
$\varphi:\mathcal{H}\longrightarrow\mathcal{H}'$  of hypergraphs  induces two simplicial maps 
\begin{eqnarray*}
\delta\varphi: ~~~ \delta \mathcal{H}\longrightarrow \delta\mathcal{H}' 
\end{eqnarray*}
and 
\begin{eqnarray*}
\Delta\varphi:  ~~~\Delta \mathcal{H}\longrightarrow \Delta\mathcal{H}'
\end{eqnarray*}
such that $\varphi=(\Delta\varphi)\mid_{\mathcal{H}}$ and $\delta\varphi=\varphi\mid_{\delta\mathcal{H}}$.  
\end{lemma}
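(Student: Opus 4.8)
The plan is to realize both induced maps through the \emph{same} underlying vertex map $\varphi\colon V\to V'$, and then to verify in each case that this vertex map carries simplices to simplices. Because a simplicial map is determined by its action on vertices, once well-definedness is established the two compatibility relations $\varphi=(\Delta\varphi)\mid_{\mathcal{H}}$ and $\delta\varphi=\varphi\mid_{\delta\mathcal{H}}$ follow for free: on any hyperedge $\sigma$ all three maps act by $\sigma\mapsto\varphi(\sigma)$. So the real content is in the two membership checks below.

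First I would construct $\Delta\varphi$. By Lemma~\ref{le-2.cc}, every simplex $\tau\in\Delta\mathcal{H}$ is a non-empty subset $\tau\subseteq\sigma$ of some hyperedge $\sigma\in\mathcal{H}$. Applying the defining property~(\ref{eq-zxs}) of a morphism, $\varphi(\sigma)$ is a hyperedge of $\mathcal{H}'$; and since $\tau\subseteq\sigma$ one has that $\varphi(\tau)$ is a non-empty subset of $\varphi(\sigma)$. By Lemma~\ref{le-rev2.1} this gives $\varphi(\tau)\in\Delta\varphi(\sigma)$, and by Lemma~\ref{le-2.cc} (applied to $\mathcal{H}'$) we get $\Delta\varphi(\sigma)\subseteq\Delta\mathcal{H}'$. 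Hence $\varphi$ sends simplices of $\Delta\mathcal{H}$ to simplices of $\Delta\mathcal{H}'$ and defines the desired simplicial map $\Delta\varphi$.

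The substantive step is the construction of $\delta\varphi$, and this is the part I expect to be the main obstacle. By Lemma~\ref{le-2.dd}, a simplex $\sigma\in\delta\mathcal{H}$ is precisely a hyperedge of $\mathcal{H}$ with $\Delta\sigma\subseteq\mathcal{H}$. To show $\varphi(\sigma)\in\delta\mathcal{H}'$, by Definition~\ref{def-rev2.3} together with Lemma~\ref{le-2.dd} it suffices to prove $\Delta\varphi(\sigma)\subseteq\mathcal{H}'$, i.e. that every non-empty $\rho\subseteq\varphi(\sigma)$ lies in $\mathcal{H}'$. The key idea is to pull $\rho$ back along $\varphi$: setting $\tau=\{v\in\sigma\mid\varphi(v)\in\rho\}$, one checks that $\tau$ is a non-empty subset of $\sigma$ with $\varphi(\tau)=\rho$, using that every element of $\rho\subseteq\varphi(\sigma)$ is the $\varphi$-image of some vertex of $\sigma$. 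Since $\tau\in\Delta\sigma\subseteq\mathcal{H}$ is itself a hyperedge of $\mathcal{H}$, the morphism property~(\ref{eq-zxs}) gives $\varphi(\tau)=\rho\in\mathcal{H}'$, as required. This shows $\varphi(\sigma)\in\delta\mathcal{H}'$, so $\varphi$ restricts to a simplicial map $\delta\varphi\colon\delta\mathcal{H}\to\delta\mathcal{H}'$.

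Finally I would record the compatibility relations. Both inclusions $\delta\mathcal{H}\subseteq\mathcal{H}$ and $\mathcal{H}\subseteq\Delta\mathcal{H}$ hold by Proposition~\ref{pr-2.ee}, and since $\Delta\varphi$, $\varphi$, and $\delta\varphi$ are all induced by the single vertex map $\varphi$, evaluating on a hyperedge of $\mathcal{H}$ (respectively $\delta\mathcal{H}$) yields $(\Delta\varphi)\mid_{\mathcal{H}}=\varphi$ (respectively $\varphi\mid_{\delta\mathcal{H}}=\delta\varphi$). The only genuine work lies in the two well-definedness checks, the $\delta$-case being the delicate one: it requires the downward-closure condition $\Delta\sigma\subseteq\mathcal{H}$ to be transported across $\varphi$, which is exactly what the preimage construction $\tau=\{v\in\sigma\mid\varphi(v)\in\rho\}$ accomplishes.
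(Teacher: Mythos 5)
Your proposal is correct and follows the same overall strategy as the paper: all three maps are induced by the single vertex map $\varphi$, so the compatibility relations are automatic and the work reduces to two membership checks. Your $\Delta$-case matches the paper's. Where you genuinely diverge is in the $\delta$-case, and your version is the stronger one. The paper only observes that the image of a simplex of $\delta\mathcal{H}$ sits inside the image of a hyperedge of $\mathcal{H}$, hence inside a hyperedge of $\mathcal{H}'$; as written, that argument only places $\varphi(\sigma)$ in $\Delta\mathcal{H}'$, not in $\delta\mathcal{H}'$, since a subset of a hyperedge of $\mathcal{H}'$ need not have all of \emph{its} subsets in $\mathcal{H}'$. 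Your pullback construction $\tau=\{v\in\sigma\mid\varphi(v)\in\rho\}$ supplies exactly the missing step: it uses the hypothesis $\Delta\sigma\subseteq\mathcal{H}$ (which is what membership in $\delta\mathcal{H}$ actually means, by Lemma~\ref{le-2.dd}) to realize every non-empty $\rho\subseteq\varphi(\sigma)$ as the $\varphi$-image of a hyperedge of $\mathcal{H}$, whence $\rho\in\mathcal{H}'$ by the morphism property and $\Delta(\varphi(\sigma))\subseteq\mathcal{H}'$ as required. In short, your argument buys a complete verification of downward closure that the paper's terser argument elides; there is no gap on your side.
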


\begin{proof}
Let $n\geq 0$.  Let $\{v_0,v_1,\ldots, v_n\}$ (these $n+1$ vertices are distinct) be an $n$-hyperedge of $\mathcal{H}$.  Since $\varphi$  is a morphism of hypergraphs,  we have that $\{\varphi(v_0),\varphi(v_1),\ldots, \varphi(v_n)\}$ (these $n+1$  vertices may not be distinct) is an $m$-hyperedge of $\mathcal{H}'$  for some $0\leq m\leq n$.

  For any $k\geq 0$ and any $k$-simplex  $\{u_0,u_1,\ldots, u_k\}$  of $\delta\mathcal{H}$,  we define  
  \begin{eqnarray}\label{eqa123}
  (\delta\varphi) (\{u_0,u_1,\ldots, u_k\})=\{\varphi(u_0),\varphi(u_1),\ldots, \varphi(u_k)\}.  
  \end{eqnarray}
By Definition~\ref{def-rev2.3},  there exists some $n\geq k$  and some $n$-hyperedge $\{v_0,v_1,\ldots, v_n\}$  of $\mathcal{H}$  such that $\{u_0,u_1,\ldots, u_k\}$  is a subset of $\{v_0,v_1,\ldots, v_n\}$.  Since  $\{\varphi(v_0),\varphi(v_1),\ldots, \varphi(v_n) \}$    is an  $m$-hyperedge  of $\mathcal{H}'$  and $\{\varphi(u_0),\varphi(u_1),\ldots, \varphi(u_k)\}$  is a subset of  $\{\varphi(v_0),\varphi(v_1),\ldots, \varphi(v_n)\} $,    it follows that $\{\varphi(u_0),\varphi(u_1),\ldots, \varphi(u_k)\} $  is an $l$-simplex of $\delta\mathcal{H}'$  for some $0\leq l\leq k$.   Therefore, $\delta\varphi$  is a simplicial map.

For any $k\geq 0$ and any $k$-simplex  $\{w_0,w_1,\ldots, w_k\}$  of $\Delta\mathcal{H}$,  we define  
  \begin{eqnarray}\label{eqb123}
  (\Delta\varphi) (\{w_0,w_1,\ldots, w_k\})=\{\varphi(w_0),\varphi(w_1),\ldots, \varphi(w_k)\}.  
  \end{eqnarray}
By Definition~\ref{def-rev2},   for any $0\leq l\leq k$  and any subset $\{v_0,v_1,\ldots, v_l\}$  of  $\{w_0,w_1,\ldots, w_k\}$,  we  always have that   $\{v_0,v_1,\ldots, v_l\}$   is an $l$-hyperedge of $\mathcal{H}$.  Hence  there exists some  $0\leq m\leq l$  such that  $\{\varphi(v_0),\varphi(v_1),\ldots, \varphi(v_l)\}$  is an $m$-hyperedge of $\mathcal{H}'$.   This implies that for any subset  $\{\varphi(v_0),\varphi(v_1),\ldots, \varphi(v_l)\}$  of  $\{\varphi(w_0),\varphi(w_1),\ldots, \varphi(w_k)\}$,   we always have that $\{\varphi(v_0),\varphi(v_1),\ldots, \varphi(v_l)\}$  is an $m$-hyperedge of $\mathcal{H}'$  for some $0\leq m\leq l$.  Consequently,  $\{\varphi(w_0),\varphi(w_1),\ldots, \varphi(w_k)\}$  is a simplex of  $\Delta\mathcal{H}'$.  Therefore,  $\Delta\varphi$  is a simplicial map.

Finally,  it is obvious that $\varphi=(\Delta\varphi)\mid_{\mathcal{H}}$ and $\delta\varphi=\varphi\mid_{\delta\mathcal{H}}$.  
\end{proof}

The next theorem follows from Proposition~\ref{pr-2.ee}, Lemma~\ref{le-rev-aza}, Lemma~\ref{le-bvc} and \cite[Proposition~3.7]{h1}.  

\begin{theorem}\label{pr-rev-axz}
A   morphism 
$\varphi:\mathcal{H}\longrightarrow\mathcal{H}'$  of hypergraphs  induces  homomorphisms between the homology groups 
\begin{eqnarray}
(\delta\varphi)_*:&& H_*(\delta\mathcal{H})\longrightarrow H_*(\delta\mathcal{H}'),\label{e88-1}\\
(\Delta\varphi)_*:&& H_*(\Delta\mathcal{H})\longrightarrow H_*(\Delta\mathcal{H}'),  \label{e88-2}\\
\varphi_*:&& H_*(\mathcal{H})\longrightarrow H_*(\mathcal{H}') 
\label{e88-3} 
\end{eqnarray}
such that the following diagram commutes
\begin{eqnarray*}
\xymatrix{
H_*(\delta\mathcal{H};R)\ar[r]^{(\iota^\delta)_*}\ar[d]_{(\delta\varphi)_*} & H_*(\mathcal{H};R)\ar[r]^{(\iota^\Delta)_*} \ar[d]_{\varphi_*}  &H_*(\Delta\mathcal{H};R)\ar[d]_{(\Delta\varphi)_*}   \\
H_*(\delta\mathcal{H}';R)\ar[r]^{(\iota'^\delta)_*}  & H_*(\mathcal{H}';R)\ar[r]^{(\iota'^\Delta)_*}  &H_*(\Delta\mathcal{H}';R).  
}
\end{eqnarray*}
In addition, if both $\mathcal{H}$ and $\mathcal{H}'$ are simplicial complexes, then $\varphi$ is a simplicial map     and  the three homomorphisms $(\delta\varphi)_*$, $(\Delta\varphi)_*$ and $\varphi_*$  in (\ref{e88-1}), (\ref{e88-2})  and (\ref{e88-3})  are the same. 
\end{theorem}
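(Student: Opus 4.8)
The plan is to build all three homomorphisms from the single simplicial chain map furnished by Lemma~\ref{le-bvc}, and then to observe that the whole diagram already commutes at the level of chain complexes, because every map in sight is a restriction of that one map.

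First I would construct the three vertical arrows. By Lemma~\ref{le-bvc} the morphism $\varphi$ yields simplicial maps $\delta\varphi\colon \delta\mathcal{H}\to\delta\mathcal{H}'$ and $\Delta\varphi\colon\Delta\mathcal{H}\to\Delta\mathcal{H}'$ with $\delta\varphi=\varphi\mid_{\delta\mathcal{H}}$ and $\varphi=(\Delta\varphi)\mid_{\mathcal{H}}$. Functoriality of simplicial homology then gives the induced homomorphisms $(\delta\varphi)_*$ and $(\Delta\varphi)_*$ in (\ref{e88-1}) and (\ref{e88-2}). For (\ref{e88-3}) I would use the chain map $(\Delta\varphi)_\#\colon C_*(\Delta\mathcal{H};R)\to C_*(\Delta\mathcal{H}';R)$: since $\varphi=(\Delta\varphi)\mid_{\mathcal{H}}$ sends hyperedges of $\mathcal{H}$ to hyperedges of $\mathcal{H}'$, the map $(\Delta\varphi)_\#$ carries $R(\mathcal{H})_*$ into $R(\mathcal{H}')_*$; since it also commutes with $\partial$, it restricts to chain maps $\mathrm{Inf}_*(R(\mathcal{H})_*)\to\mathrm{Inf}_*(R(\mathcal{H}')_*)$ and $\mathrm{Sup}_*(R(\mathcal{H})_*)\to\mathrm{Sup}_*(R(\mathcal{H}')_*)$, and hence induces $\varphi_*$ on embedded homology. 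This is exactly the content of \cite[Proposition~3.7]{h1}, which I would cite for the well-definedness of $\varphi_*$.

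The key structural observation, and the one that makes the square commute, is that every chain map above is a restriction of the ambient map $(\Delta\varphi)_\#$. Indeed $\delta\varphi=(\Delta\varphi)\mid_{\delta\mathcal{H}}$, and $\varphi_\#$ on $\mathrm{Inf}_*$ and on $\mathrm{Sup}_*$ are both restrictions of $(\Delta\varphi)_\#$, while the horizontal arrows supplied by Lemma~\ref{le-rev-aza} are inclusions of sub-chain complexes inside the chain $C_*(\delta\mathcal{H};R)\subseteq\mathrm{Inf}_*(R(\mathcal{H})_*)\subseteq\mathrm{Sup}_*(R(\mathcal{H})_*)\subseteq C_*(\Delta\mathcal{H};R)$ (and likewise for $\mathcal{H}'$). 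The restriction of one fixed map to nested subobjects commutes tautologically with the inclusions of those subobjects, so the entire ladder of chain complexes commutes. Passing to homology, and using Lemma~\ref{le-rev8} to identify the embedded homology computed via $\mathrm{Inf}_*$ with that computed via $\mathrm{Sup}_*$---an identification that is itself induced by inclusion and hence natural with respect to $(\Delta\varphi)_\#$---I obtain the commutative square in the statement.

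Finally I would dispose of the simplicial case. If $\mathcal{H}$ and $\mathcal{H}'$ are simplicial complexes, Proposition~\ref{pr-2.ee} gives $\delta\mathcal{H}=\mathcal{H}=\Delta\mathcal{H}$ and $\delta\mathcal{H}'=\mathcal{H}'=\Delta\mathcal{H}'$, so the horizontal inclusions $(\iota^\delta)_*$ and $(\iota^\Delta)_*$ are identities and the three vertical maps necessarily coincide; moreover $\varphi$ then sends simplices to simplices and is a simplicial map. The main obstacle is not the commutativity---which is formal once the reduction to $(\Delta\varphi)_\#$ is made---but checking that $(\Delta\varphi)_\#$ genuinely restricts to the infimum and supremum chain complexes and that the $\mathrm{Inf}/\mathrm{Sup}$ comparison of Lemma~\ref{le-rev8} is natural in $\varphi$; both are secured by \cite[Proposition~3.7]{h1} together with the fact that the comparison isomorphism is induced by a plain inclusion.
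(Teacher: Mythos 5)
Your proposal is correct and follows essentially the same route as the paper: both obtain $(\delta\varphi)_*$ and $(\Delta\varphi)_*$ from the simplicial maps of Lemma~\ref{le-bvc}, obtain $\varphi_*$ from \cite[Proposition~3.7]{h1}, and deduce the simplicial-complex case from Proposition~\ref{pr-2.ee}. Your explicit observation that every chain map in the ladder is a restriction of the single ambient chain map $(\Delta\varphi)_\#$, so that commutativity is tautological at the chain level and the $\mathrm{Inf}/\mathrm{Sup}$ comparison of Lemma~\ref{le-rev8} is natural, is a clean way of carrying out what the paper compresses into the phrase ``by a direct diagram chasing.''
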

\begin{proof}
The simplicial maps $\delta\varphi$ and $\Delta\varphi$   induce  homomorphisms between the homology groups 
(\ref{e88-1})  and (\ref{e88-2})  respectively.  
Moreover, by \cite[Proposition~3.7]{h1}, we have an induced homomorphism between the embedded homology 
(\ref{e88-3}).  
By a direct diagram chasing,  we can prove that the diagram  in Theorem~\ref{pr-rev-axz}commutes.

Suppose in addition that both $\mathcal{H}$ and $\mathcal{H}'$ are simplicial complexes.  Then by Proposition~\ref{pr-2.ee},  we  have $\Delta\mathcal{H}=\delta\mathcal{H}=\mathcal{H}$  and $\Delta\mathcal{H}'=\delta\mathcal{H}'=\mathcal{H}'$.    It follows from  the  commutative diagram that  all the homomorphisms $(\iota^\delta)_*$,  $(\iota^\Delta)_*$,  $(\iota'^\delta)_*$ and $(\iota'^\Delta)_*$  are isomorphisms.   
It also follows from  
(\ref{eq-zxs}), (\ref{eqa123}) and (\ref{eqb123}) that  the three homomorphisms $(\delta\varphi)_*$, $(\Delta\varphi)_*$ and $\varphi_*$  in (\ref{e88-1}), (\ref{e88-2})  and (\ref{e88-3})  are the same. 
\end{proof}

\smallskip

\subsection{Examples}

The associated simplicial complex,  the lower-associated simplicial complex and the embedded homology detect the topology of hypergraphs from different aspects.   
 Given two hypergraphs $\mathcal{H}$ and $\mathcal{H}'$, we consider the following  conditions:
 \begin{enumerate}[(1).]
\item
 $\delta \mathcal{H}= \delta \mathcal{H}'$;
\item
  $\Delta \mathcal{H}= \Delta \mathcal{H}'$;
\item
  $H_*( \mathcal{H})\cong H_*(\mathcal{H}')$.
  \end{enumerate}

  \smallskip

 The next example shows that (1) and (2) cannot imply (3). 

\begin{example}\label{ex82}
Let 
\begin{eqnarray*}
\mathcal{H}&=&\{ \{v_0,v_1,v_2,v_3\},\{v_0\}\},\\
\mathcal{H}'&=&\{ \{v_0,v_1,v_2,v_3\}, \{v_0,v_1\}, \{v_0,v_2\},\{v_0,v_3\},\{v_1,v_2\},\{v_1,v_3\},\{v_2,v_3\},\{v_0\}\}. 
\end{eqnarray*}
  Then  $\delta\mathcal{H}=\delta\mathcal{H}'=\{\{v_0\}\}$.  Moreover,  both $\Delta\mathcal{H}$ and $\Delta\mathcal{H}'$ are the tetrahedron.   Furthermore,  
$H_1(\mathcal{H})=0$ and $H_1(\mathcal{H}')=\mathbb{Z}^{\oplus 3}$. 
\end{example}

\noindent The next example shows that (1) and (3) cannot imply (2).  
 
\begin{example}(see Figure~1.)\label{ex81}
Let 
\begin{eqnarray*}
\mathcal{H}&=&\{\{v_0\},\{v_1\},\{v_2\}, \{v_3\},\{v_4\},\{v_5\},\\
&&\{v_0,v_1,v_3\}, \{v_1,v_2,v_4\},\{v_3,v_4,v_5\}\},\\
\mathcal{H}'&=&\{\{v_0\},\{v_1\},\{v_2\}, \{v_3\},\{v_4\},\{v_5\},\\
&&\{v_0,v_1,v_3\}, \{v_1,v_2,v_4\},\{v_1,v_3,v_4\},\{v_3,v_4,v_5\}\}. 
\end{eqnarray*}
  Then  $\delta\mathcal{H}=\delta\mathcal{H}'=\{\{v_0\},\{v_1\},\{v_2\}, \{v_3\},\{v_4\},\{v_5\}\}$.   Moreover,  
$H_n(\mathcal{H})=H_n(\mathcal{H}')=0$ for $n\geq 1$  and $H_0(\mathcal{H})=H_0(\mathcal{H}')=\mathbb{Z}^{\oplus 6}$.  
Furthermore, $H_1(\Delta\mathcal{H})=\mathbb{Z}$, and $H_1(\Delta{\mathcal{H}'})=0$.

\begin{figure}[!htbp]
 \begin{center}
\begin{tikzpicture}
\coordinate [label=below left:$v_0$]    (A) at (1,0); 
 \coordinate [label=below right:$v_1$]   (B) at (2.5,0); 
 \coordinate  [label=below right:$v_2$]   (C) at (4,0); 
\coordinate  [label=left:$v_3$]   (D) at (1.75,2/2); 
\coordinate  [label=right:$v_4$]   (E) at (6.5/2,2/2); 
\coordinate  [label=right:$v_5$]   (F) at (5/2,4/2);

\fill (1,0) circle (2.5pt);
\fill (2.5,0) circle (2.5pt);
\fill (4,0) circle (2.5pt);
\fill (1.75,2/2) circle (2.5pt); 
\fill (6.5/2,2/2) circle (2.5pt); 
\fill (5/2,4/2) circle (2.5pt);

 \coordinate[label=left:$\mathcal{H}$:] (G) at (0.5/2,2/2);
 \draw [dashed,thick] (A) -- (B);
 \draw [dashed,thick] (B) -- (C);
  \draw [dashed,thick] (D) -- (A);
\draw [dashed,thick] (D) -- (B);
\draw [dashed,thick] (E) -- (C);
\draw [dashed,thick] (E) -- (B);
\draw [dashed,thick] (E) -- (F);
\draw [dashed,thick] (D) -- (F);
\draw [dashed, thick] (D) -- (E);

\fill [fill opacity=0.25][gray!100!white] (A) -- (B) -- (D) -- cycle;
\fill [fill opacity=0.25][gray!100!white] (B) -- (C) -- (E) -- cycle;
\fill [fill opacity=0.25][gray!100!white] (D) -- (E) -- (F) -- cycle;

\coordinate [label=below left:$v_0$]    (A) at (1+6,0); 
 \coordinate [label=below right:$v_1$]   (B) at (2.5+6,0); 
 \coordinate  [label=below right:$v_2$]   (C) at (4+6,0); 
\coordinate  [label=left:$v_3$]   (D) at (1.75+6,2/2); 
\coordinate  [label=right:$v_4$]   (E) at (6.5/2+6,2/2); 
\coordinate  [label=right:$v_5$]   (F) at (5/2+6,4/2); 

 \coordinate[label=left:$\Delta{\mathcal{H}}$:] (G) at (0.5/2+6,2/2);
 \draw  [thick](A) -- (B);
 \draw [thick] (B) -- (C);
  \draw [thick] (D) -- (A);
\draw [thick] (D) -- (B);
\draw [thick] (E) -- (C);
\draw [thick] (E) -- (B);
\draw [thick] (E) -- (F);
\draw [thick] (D) -- (F);
\draw [thick] (D) -- (E);

\fill [fill opacity=0.25][gray!100!white] (A) -- (B) -- (D) -- cycle;
\fill [fill opacity=0.25][gray!100!white] (B) -- (C) -- (E) -- cycle;
\fill [fill opacity=0.25][gray!100!white] (D) -- (E) -- (F) -- cycle;

\fill (7,0) circle (2.5pt);
\fill (8.5,0) circle (2.5pt);
\fill (10,0) circle (2.5pt);
\fill (7.75,2/2) circle (2.5pt); 
\fill (6.5/2+6,2/2) circle (2.5pt); 
\fill (5/2+6,4/2) circle (2.5pt); 

 \end{tikzpicture}
\end{center}

 \begin{center}
\begin{tikzpicture}
\coordinate [label=below left:$v_0$]    (A) at (2/2,0); 
 \coordinate [label=below right:$v_1$]   (B) at (5/2,0); 
 \coordinate  [label=below right:$v_2$]   (C) at (8/2,0); 
\coordinate  [label=left:$v_3$]   (D) at (3.5/2,2/2); 
\coordinate  [label=right:$v_4$]   (E) at (6.5/2,2/2); 
\coordinate  [label=right:$v_5$]   (F) at (5/2,4/2);

\fill (1,0) circle (2.5pt);
\fill (2.5,0) circle (2.5pt);
\fill (4,0) circle (2.5pt);
\fill (1.75,2/2) circle (2.5pt); 
\fill (6.5/2,2/2) circle (2.5pt); 
\fill (5/2,4/2) circle (2.5pt);

 \coordinate[label=left:$\mathcal{H}'$:] (G) at (0.5/2,2/2);
 \draw [dashed,thick] (A) -- (B);
 \draw [dashed,thick] (B) -- (C);
  \draw [dashed,thick] (D) -- (A);
\draw [dashed,thick] (D) -- (B);
\draw [dashed,thick] (E) -- (C);
\draw [dashed,thick] (E) -- (B);
\draw [dashed,thick] (E) -- (F);
\draw [dashed,thick] (D) -- (F);
\draw [dashed,thick] (D) -- (E);

\fill [fill opacity=0.25][gray!100!white] (A) -- (B) -- (D) -- cycle;
\fill [fill opacity=0.25][gray!100!white] (B) -- (C) -- (E) -- cycle;
\fill [fill opacity=0.25][gray!100!white] (D) -- (E) -- (F) -- cycle;
\fill [fill opacity=0.25][gray!100!white] (D) -- (E) -- (B) -- cycle;

\coordinate [label=below left:$v_0$]    (A) at (1+6,0); 
 \coordinate [label=below right:$v_1$]   (B) at (2.5+6,0); 
 \coordinate  [label=below right:$v_2$]   (C) at (4+6,0); 
\coordinate  [label=left:$v_3$]   (D) at (1.75+6,2/2); 
\coordinate  [label=right:$v_4$]   (E) at (6.5/2+6,2/2); 
\coordinate  [label=right:$v_5$]   (F) at (5/2+6,4/2); 

 \coordinate[label=left:$\Delta{\mathcal{H}'}$:] (G) at (0.5/2+6,2/2);
 \draw [thick] (A) -- (B);
 \draw [thick] (B) -- (C);
  \draw [thick] (D) -- (A);
\draw [thick] (D) -- (B);
\draw [thick] (E) -- (C);
\draw [thick] (E) -- (B);
\draw [thick] (E) -- (F);
\draw [thick] (D) -- (F);
\draw [thick] (D) -- (E);

\fill [fill opacity=0.25][gray!100!white] (A) -- (B) -- (D) -- cycle;
\fill [fill opacity=0.25][gray!100!white] (B) -- (C) -- (E) -- cycle;
\fill [fill opacity=0.25][gray!100!white] (D) -- (E) -- (F) -- cycle;
\fill [fill opacity=0.25][gray!100!white] (D) -- (E) -- (B) -- cycle;

\fill (7,0) circle (2.5pt);
\fill (8.5,0) circle (2.5pt);
\fill (10,0) circle (2.5pt);
\fill (7.75,2/2) circle (2.5pt); 
\fill (6.5/2+6,2/2) circle (2.5pt); 
\fill (5/2+6,4/2) circle (2.5pt); 
 \end{tikzpicture}
\end{center}
\caption{Example~\ref{ex81}.}
\end{figure}
\end{example}


 For general hypergraphs $\mathcal{H}$ and $\mathcal{H}'$, 
the following example shows that  the homomorphisms (\ref{e88-1}), (\ref{e88-2}) and (\ref{e88-3})   of the homology groups  can be distinct. 

\begin{example}\label{ex1}
Let $\mathcal {H}=\{\{v_0,v_1\},\{v_1,v_2\},\{v_0,v_2\}\}$. Let $\sigma=\{v_0,v_1,v_2\}$. Let $\mathcal{H}'=\mathcal{H}\cup\{\sigma\}$.   Let $\varphi$ be the canonical inclusion of $\mathcal{H}$ into $\mathcal{H}'$. Then 
\begin{enumerate}[(a). ]
\item
$H_0(\mathcal{H})=H_0(\mathcal{H}')=0$, $H_1(\mathcal{H})=\mathbb{Z}$, and $H_1(\mathcal{H}')=0$. Moreover,  $\varphi_*$ is
\begin{itemize}
\item
 the identity map from zero to zero for the homology groups of  dimension $0$, 
 \item
  the zero map on $\mathbb{Z}$  for the homology  groups of  dimension $1$;
  \end{itemize}
\item
$\delta \mathcal{H}=\delta\mathcal{H}'=\emptyset$. Moreover, $(\delta\varphi)_*$ is
 the identity map from zero to zero  for the homology groups of  all   dimensions;
\item
$\Delta \mathcal{H}\simeq S^1$ and $\Delta\mathcal{H}'\simeq *$.  Moreover, $(\Delta\varphi)_*$ is
\begin{itemize}
\item
 the identity map on $\mathbb{Z}$ for the homology groups of   dimension $0$, 
 \item
 the zero map on $\mathbb{Z}$ for the homology groups of   dimension $1$. 
 \end{itemize}  
\end{enumerate} 
\end{example}



    \section{Discrete Morse Functions  on Hypergraphs and Critical Hyperedges}\label{s3}
    
    Let $\mathcal{H}$  be a hypergraph on $V$.     In this section, we define the discrete  Morse functions on $\mathcal{H}$ and their critical hyperedges.  We study the extensions  of discrete Morse functions on hypergraphs to discrete Morse functions on the associated simplicial complexes. 
    
    \smallskip

\subsection{ Discrete Morse Functions on Hypergraphs }

    \begin{definition}\label{def1}
   A function $f: \mathcal{H}\longrightarrow \mathbb{R}$ is called a {\it discrete Morse function} on $\mathcal{H}$ if for every $n\geq 0$ and every  $\alpha^{(n)}\in \mathcal{H}$,  both of the two conditions hold:
    \begin{enumerate}[(i).]
    \item
    $\#\{\beta^{(n+1)}>\alpha^{(n)}\mid f(\beta)\leq f(\alpha), \beta\in \mathcal{H}\}\leq 1$;
    \item
    $\#\{\gamma^{(n-1)}<\alpha^{(n)}\mid f(\gamma)\geq f(\alpha), \gamma\in \mathcal{H}\}\leq 1$. 
    \end{enumerate}
    \end{definition}
    
    \begin{remark}
   Note that Definition~\ref{def1}  is a generalization of \cite[Definition~2.1]{forman1}.   In particular, we  take  $\mathcal{H}$ to be  a simplicial complex  in Definition~\ref{def1}.  Then the discrete Morse functions defined in Definition~\ref{def1} are the same as the discrete Morse functions defined in \cite[Definition~2.1]{forman1}.  
   \end{remark}
   
   The next lemma follows from Definition~\ref{def1}.

\begin{lemma}\label{lem1}
Let $\mathcal{H}$ and $\mathcal{H}'$ be two hypergraphs such that $\mathcal{H}'\subseteq \mathcal{H}$. Suppose $f:\mathcal{H}\longrightarrow\mathbb{R}$ is a discrete Morse function on $\mathcal{H}$. Let $f'=f\mid_{\mathcal{H}'}$  be  the restriction of $f$ to $\mathcal{H}'$.  Then $f'$ is a discrete Morse function on $\mathcal{H}'$.   
\end{lemma}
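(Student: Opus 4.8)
The plan is to verify directly that the restriction $f' = f|_{\mathcal{H}'}$ satisfies the two counting conditions (i) and (ii) of Definition~\ref{def1}. The essential observation is that the defining conditions are stated as upper bounds (at most one) on the cardinality of certain sets of hyperedges, and passing to a sub-hypergraph $\mathcal{H}' \subseteq \mathcal{H}$ can only remove elements from those sets, never add any. Thus each bound is preserved. No property of the real numbers $f(\alpha)$, $f(\beta)$, $f(\gamma)$ beyond the inequalities already recorded by $f$ is needed, because $f'$ agrees with $f$ pointwise on $\mathcal{H}'$.

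First I would fix $n \geq 0$ and an arbitrary $n$-hyperedge $\alpha^{(n)} \in \mathcal{H}'$. Since $\mathcal{H}' \subseteq \mathcal{H}$ by hypothesis, we also have $\alpha^{(n)} \in \mathcal{H}$, so the conditions (i) and (ii) for $f$ apply to $\alpha$. To handle condition (i), I would compare the two sets
\begin{eqnarray*}
A' &=& \{\beta^{(n+1)} > \alpha^{(n)} \mid f'(\beta) \leq f'(\alpha),\ \beta \in \mathcal{H}'\}, \\
A &=& \{\beta^{(n+1)} > \alpha^{(n)} \mid f(\beta) \leq f(\alpha),\ \beta \in \mathcal{H}\}.
\end{eqnarray*}
Because $f'(\beta) = f(\beta)$ for every $\beta \in \mathcal{H}'$ and $f'(\alpha) = f(\alpha)$, and because $\mathcal{H}' \subseteq \mathcal{H}$, any $\beta \in A'$ lies in $\mathcal{H}$ and satisfies $f(\beta) \leq f(\alpha)$; hence $A' \subseteq A$. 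Therefore $\#A' \leq \#A \leq 1$, where the last inequality is condition (i) for $f$ at $\alpha$. This verifies (i) for $f'$.

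The argument for condition (ii) is entirely symmetric: setting $B' = \{\gamma^{(n-1)} < \alpha^{(n)} \mid f'(\gamma) \geq f'(\alpha),\ \gamma \in \mathcal{H}'\}$ and the analogous set $B$ over $\mathcal{H}$, the same inclusion $B' \subseteq B$ gives $\#B' \leq \#B \leq 1$. Since $n$ and $\alpha$ were arbitrary, both conditions of Definition~\ref{def1} hold for $f'$ on $\mathcal{H}'$, so $f'$ is a discrete Morse function on $\mathcal{H}'$. I do not anticipate any genuine obstacle here; the only point requiring a moment's care is confirming that the face/coface relations $\beta > \alpha$ and $\gamma < \alpha$ are purely combinatorial (inclusion of vertex sets) and hence unchanged when we restrict attention to $\mathcal{H}'$, so that membership in the restricted counting sets is governed solely by whether the relevant hyperedge belongs to $\mathcal{H}'$.
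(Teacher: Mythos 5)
Your proposal is correct and is essentially the same argument as the paper's: both establish the inclusion of the restricted counting sets into the corresponding sets over $\mathcal{H}$ (using $f'=f$ on $\mathcal{H}'$ and $\mathcal{H}'\subseteq\mathcal{H}$) and then conclude $\#A'\leq\#A\leq 1$ and $\#B'\leq\#B\leq 1$. No differences worth noting.
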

\begin{proof}
For every $\alpha^{(n)}\in \mathcal{H}$, since $\mathcal{H}'\subseteq \mathcal{H}$, 
\begin{eqnarray*}
\{\beta^{(n+1)}>\alpha^{(n)} \mid f'(\beta)\leq f'(\alpha), \beta\in\mathcal{H}'\}\subseteq \{\beta^{(n+1)}>\alpha^{(n)} \mid f(\beta)\leq f(\alpha), \beta\in\mathcal{H}\}. 
\end{eqnarray*}
Since $f$ is a discrete Morse function on $\mathcal{H}$,
\begin{eqnarray*}
\#\{\beta^{(n+1)}>\alpha^{(n)} \mid f'(\beta)\leq f'(\alpha), \beta\in\mathcal{H}'\}\leq \# \{\beta^{(n+1)}>\alpha^{(n)} \mid f(\beta)\leq f(\alpha), \beta\in\mathcal{H}\}\leq 1. 
\end{eqnarray*}
Similarly, 
\begin{eqnarray*}
\#\{\gamma^{(n-1)}<\alpha^{(n)} \mid f'(\gamma)\geq f'(\alpha), \gamma\in\mathcal{H}'\}\leq \# \{\gamma^{(n-1)}<\alpha^{(n)} \mid f(\gamma)\geq f(\alpha), \gamma\in\mathcal{H}\}\leq 
1. 
\end{eqnarray*}
Hence $f'$ is a discrete Morse function on $\mathcal{H}'$. 
\end{proof}

\begin{remark}
In particular, if $\mathcal{H}$ is a simplicial complex, then Lemma~\ref{lem1} is reduced to \cite[Lemma~2.1]{forman1}.  
\end{remark}

The next proposition follows from Lemma~\ref{lem1} immediately. 

\begin{proposition}\label{pr1}
\begin{enumerate}[(i).]
\item
Let $\overline f: \Delta\mathcal{H}\longrightarrow\mathbb{R}$ be a discrete Morse function on $\Delta\mathcal{H}$. Then $f=\overline f\mid_{\mathcal{H}}$ is a discrete Morse function on $\mathcal{H}$;
\item
Let $f:\mathcal{H}\longrightarrow \mathbb{R}$ be a discrete Morse function on $\mathcal{H}$. Then ${\underline   f}=f\mid _{\delta\mathcal{H}}$ is a discrete Morse function on $\delta\mathcal{H}$.  
\end{enumerate}
\end{proposition}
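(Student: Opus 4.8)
The plan is to derive both statements directly from Lemma~\ref{lem1}, using only the containment $\delta\mathcal{H}\subseteq\mathcal{H}\subseteq\Delta\mathcal{H}$ established in Proposition~\ref{pr-2.ee}. The guiding observation is that each part is literally an instance of ``restricting a discrete Morse function to a sub-hypergraph,'' which is precisely the content of Lemma~\ref{lem1}. In particular, no fresh verification of the counting conditions (i) and (ii) of Definition~\ref{def1} is required, since Lemma~\ref{lem1} already shows that passing to a sub-hypergraph can only shrink the two sets being counted.

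First I would handle (i). By Proposition~\ref{pr-2.ee} we have the embedding $\mathcal{H}\subseteq\Delta\mathcal{H}$. I then apply Lemma~\ref{lem1} with the ambient hypergraph taken to be $\Delta\mathcal{H}$ and the sub-hypergraph taken to be $\mathcal{H}$, starting from the given discrete Morse function $\overline f$ on $\Delta\mathcal{H}$. The lemma yields immediately that the restriction $f=\overline f\mid_{\mathcal{H}}$ is a discrete Morse function on $\mathcal{H}$.

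Next I would handle (ii) by the same device. Proposition~\ref{pr-2.ee} also gives the embedding $\delta\mathcal{H}\subseteq\mathcal{H}$. Applying Lemma~\ref{lem1} with the ambient hypergraph now taken to be $\mathcal{H}$ and the sub-hypergraph taken to be $\delta\mathcal{H}$, starting from the given discrete Morse function $f$ on $\mathcal{H}$, I conclude that $\underline f=f\mid_{\delta\mathcal{H}}$ is a discrete Morse function on $\delta\mathcal{H}$.

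Since both parts are specializations of Lemma~\ref{lem1} to the two containments of Proposition~\ref{pr-2.ee}, there is no substantive obstacle. The only point requiring care is correctly matching the roles of the larger and smaller hypergraph in the hypothesis $\mathcal{H}'\subseteq\mathcal{H}$ of Lemma~\ref{lem1}: in each case the larger complex is the one carrying the given discrete Morse function and the smaller complex is the one receiving the restriction, so that in (i) the larger complex is $\Delta\mathcal{H}$ while in (ii) it is $\mathcal{H}$.
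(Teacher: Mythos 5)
Your proposal is correct and is essentially identical to the paper's own proof: the paper likewise obtains (i) by applying Lemma~\ref{lem1} to the pair $\mathcal{H}\subseteq\Delta\mathcal{H}$ and (ii) by applying it to the pair $\delta\mathcal{H}\subseteq\mathcal{H}$. Your added care about which hypergraph plays the role of the ambient one matches the paper's (implicit) usage, so there is nothing to fix.
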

\begin{proof}
We  apply Lemma~\ref{lem1}  to the pair $\mathcal{H}\subseteq \Delta\mathcal{H}$.  We  obtain (i).  We  apply Lemma~\ref{lem1}  to the pair $\delta\mathcal{H}\subseteq  \mathcal{H}$.  We  obtain (ii). 
\end{proof}

  The next corollary is a consequence of \cite{forman1} and Proposition~\ref{pr1}. 

\begin{corollary}\label{cor1}
For any hypergraph $\mathcal{H}$, there exist discrete Morse functions $\overline f$ on $\Delta\mathcal{H}$, $f$ on $\mathcal{H}$, and ${\underline   f}$ on $\delta\mathcal{H}$ such that $f=\overline f\mid_{\mathcal{H}}$  and ${\underline   f}= f\mid_{\delta\mathcal{H}}=\overline f\mid_{\delta\mathcal{H}}$.   
\end{corollary}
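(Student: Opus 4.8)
The plan is to build the three functions from the top down along the inclusion chain $\delta\mathcal{H}\subseteq\mathcal{H}\subseteq\Delta\mathcal{H}$ established in Proposition~\ref{pr-2.ee}, starting from a discrete Morse function on the associated simplicial complex and then passing to successive restrictions. First I would invoke Forman's theory: since $\Delta\mathcal{H}$ is a simplicial complex, \cite{forman1} guarantees the existence of a discrete Morse function $\overline{f}$ on $\Delta\mathcal{H}$. In fact this existence is entirely unproblematic, since the dimension function $\overline{f}(\sigma)=\dim\sigma$ is always a discrete Morse function on a simplicial complex: for any $\alpha^{(n)}$ every $\beta^{(n+1)}>\alpha^{(n)}$ has $\overline{f}(\beta)=n+1>n=\overline{f}(\alpha)$ and every $\gamma^{(n-1)}<\alpha^{(n)}$ has $\overline{f}(\gamma)=n-1<n=\overline{f}(\alpha)$, so both counts in Definition~\ref{def1} are zero. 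One may take this as $\overline{f}$ if a concrete choice is wanted.

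Next I would cascade downward by restriction. Applying Proposition~\ref{pr1}(i) to $\overline{f}$ yields that $f=\overline{f}\mid_{\mathcal{H}}$ is a discrete Morse function on $\mathcal{H}$. Applying Proposition~\ref{pr1}(ii) to the resulting $f$ yields that $\underline{f}=f\mid_{\delta\mathcal{H}}$ is a discrete Morse function on $\delta\mathcal{H}$. Two of the three required identities, namely $f=\overline{f}\mid_{\mathcal{H}}$ and $\underline{f}=f\mid_{\delta\mathcal{H}}$, then hold by the very construction of $f$ and $\underline{f}$.

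It remains only to verify the final identity $\underline{f}=\overline{f}\mid_{\delta\mathcal{H}}$, which follows from the transitivity of restriction along the inclusion chain. Since $\delta\mathcal{H}\subseteq\mathcal{H}$, restricting $f=\overline{f}\mid_{\mathcal{H}}$ further to $\delta\mathcal{H}$ gives
\begin{eqnarray*}
\underline{f}=f\mid_{\delta\mathcal{H}}=(\overline{f}\mid_{\mathcal{H}})\mid_{\delta\mathcal{H}}=\overline{f}\mid_{\delta\mathcal{H}},
\end{eqnarray*}
as desired. There is no genuine obstacle in this argument: the entire content is the observation that Proposition~\ref{pr1} lets one restrict a discrete Morse function step by step down the chain $\delta\mathcal{H}\subseteq\mathcal{H}\subseteq\Delta\mathcal{H}$, together with the elementary fact that an iterated restriction coincides with a single restriction. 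The only input that does real work is the existence statement from \cite{forman1}, and it is needed only to start the process on $\Delta\mathcal{H}$; everything below is then obtained for free by restriction.
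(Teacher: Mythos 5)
Your proof is correct and follows essentially the same route as the paper: invoke Forman's existence result to get $\overline f$ on $\Delta\mathcal{H}$, then cascade down via Proposition~\ref{pr1} to obtain $f$ and $\underline f$ by restriction. Your explicit dimension-function example and the remark on transitivity of restriction are nice additions but do not change the argument.
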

\begin{proof}
By \cite[Section~4]{forman1},  there  exists a discrete Morse function $\overline f$  on $\Delta\mathcal{H}$ (in the sense of \cite[Definition~2.1]{forman1}).  By Proposition~\ref{pr1},  we  obtain a discrete Morse function $f$  on $\mathcal{H}$ (in the sense of Definition~\ref{def1})  and a discrete Morse function ${\underline   f}$ on $\delta\mathcal{H}$ (in the sense of \cite[Definition~2.1]{forman1}) such that  $f=\overline f\mid_{\mathcal{H}}$  and ${\underline   f}= f\mid_{\delta\mathcal{H}}=\overline f\mid_{\delta\mathcal{H}}$.  
 \end{proof}


\smallskip

\subsection{Critical Hyperedges}\label{subs-3.2}


Let $f$ be a discrete Morse function on $\mathcal{H}$. 

\begin{definition}\label{def2}
A hyperedge $\alpha^{(n)}\in\mathcal{H}$ is called {\it critical} if  
both of the following two conditions hold:
\begin{enumerate}[(i).]
\item
$\#\{\beta^{(n+1)}>\alpha^{(n)}\mid f(\beta)\leq f(\alpha),\beta\in\mathcal{H}\}=0$;
\item
$\#\{\gamma^{(n-1)}<\alpha^{(n)}\mid f(\gamma)\geq f(\alpha),\gamma\in\mathcal{H}\}=0$. 
\end{enumerate}
\end{definition}

\begin{remark}
    In particular, if $\mathcal{H}$ is a simplicial complex, then the critical hyperedges defined in Definition~\ref{def2} are the same as the critical simplices defined in \cite[Definition~2.2]{forman1}. 
\end{remark}

\begin{definition}\label{def222}
 We use $M(f,\mathcal{H})$ to denote the set of all critical hyperedges.   
\end{definition}

The next lemma is equivalent to  Definition~\ref{def2}.  

\begin{lemma}\label{le-ab}
For any $n\geq 0$ and any $\alpha^{(n)}\in\mathcal{H}$,  we  have that $\alpha\notin M(f,\mathcal{H})$  if at least one of the following conditions hold:

\begin{enumerate}[(A).]
\item
there exists $\beta^{(n+1)}>\alpha^{(n)}$, $\beta\in\mathcal{H}$, such that $f(\beta)\leq f(\alpha)$;
\item
there exists $\gamma^{(n-1)}<\alpha^{(n)}$, $\gamma\in\mathcal{H}$, such that $f(\gamma)\geq f(\alpha)$. 
\end{enumerate}
\end{lemma}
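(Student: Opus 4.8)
The plan is simply to negate the defining conditions of criticality and invoke De Morgan's law; this is exactly why the statement is \emph{equivalent} to Definition~\ref{def2}. First I would recall that, by Definition~\ref{def222}, a hyperedge $\alpha^{(n)}$ lies in $M(f,\mathcal{H})$ precisely when it is critical, which by Definition~\ref{def2} happens exactly when both conditions (i) and (ii) hold. Consequently $\alpha\notin M(f,\mathcal{H})$ holds if and only if at least one of (i), (ii) fails.

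The key (and only) observation is the elementary fact that a finite set has cardinality zero if and only if it is empty, i.e.\ if and only if there is no element satisfying its defining property. Applying this to condition (i), its negation
\[
\#\{\beta^{(n+1)}>\alpha^{(n)}\mid f(\beta)\leq f(\alpha),\ \beta\in\mathcal{H}\}\neq 0
\]
asserts exactly that the displayed set is nonempty, which is precisely the existence statement (A). In the same way, the negation of condition (ii) is precisely the existence statement (B).

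Combining these, $\alpha\notin M(f,\mathcal{H})$ holds iff (i) fails or (ii) fails, iff (A) holds or (B) holds. In particular, whenever at least one of (A), (B) holds we conclude $\alpha\notin M(f,\mathcal{H})$, which is the assertion of the lemma (and the converse direction, giving the full equivalence with Definition~\ref{def2}, follows from the same chain of iff's). I do not anticipate any genuine obstacle: the entire content is the translation of ``the count equals zero'' into ``no such hyperedge exists,'' together with De Morgan's law, so the argument is a direct unwinding of the definitions.
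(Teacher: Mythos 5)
Your proof is correct and matches the paper's own argument, which simply observes that the lemma is the contrapositive of Definition~\ref{def2}. Your unwinding of ``cardinality zero'' into ``no such hyperedge exists'' together with De Morgan's law is exactly this contrapositive, spelled out in more detail.
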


\begin{proof}
The lemma is the contrapositive statement of Definition~\ref{def2}.  
\end{proof}

\begin{lemma}\label{lem2}
Let $\mathcal{H}$ and $\mathcal{H}'$ be two hyperedges such that $\mathcal{H}'\subseteq\mathcal{H}$. Let $f:\mathcal{H}\longrightarrow\mathbb{R}$ be a discrete Morse function on $\mathcal{H}$ and let $f'=f\mid_{\mathcal{H}'}$.  Then 
\begin{eqnarray*}
M(f,\mathcal{H})\cap\mathcal{H}'\subseteq M(f',\mathcal{H}').
\end{eqnarray*}  
\end{lemma}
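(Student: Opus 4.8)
The plan is to take an arbitrary critical hyperedge $\alpha^{(n)}\in M(f,\mathcal{H})\cap\mathcal{H}'$ and verify that it remains critical for the restricted function $f'$ on $\mathcal{H}'$, i.e.\ that $\alpha\in M(f',\mathcal{H}')$. The argument is entirely parallel to the one used in the proof of Lemma~\ref{lem1}: I exploit that $\mathcal{H}'\subseteq\mathcal{H}$ and that $f'$ is literally the restriction of $f$, so that the two sets appearing in Definition~\ref{def2} for the pair $(f',\mathcal{H}')$ are subsets of the corresponding sets for $(f,\mathcal{H})$.

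First I would fix $\alpha\in M(f,\mathcal{H})\cap\mathcal{H}'$, say of dimension $n$. Because $\alpha\in\mathcal{H}'$ and $f'=f\mid_{\mathcal{H}'}$, the value $f'(\alpha)=f(\alpha)$ is well defined, so Definition~\ref{def2} makes sense for $\alpha$ inside $\mathcal{H}'$. Next I would record the key inclusion of sets: for any $\beta^{(n+1)}>\alpha^{(n)}$ with $\beta\in\mathcal{H}'$ we also have $\beta\in\mathcal{H}$, and $f'(\beta)=f(\beta)$, so the condition $f'(\beta)\leq f'(\alpha)$ coincides with $f(\beta)\leq f(\alpha)$. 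Hence
\begin{eqnarray*}
\{\beta^{(n+1)}>\alpha^{(n)}\mid f'(\beta)\leq f'(\alpha),\beta\in\mathcal{H}'\}\subseteq\{\beta^{(n+1)}>\alpha^{(n)}\mid f(\beta)\leq f(\alpha),\beta\in\mathcal{H}\},
\end{eqnarray*}
and symmetrically
\begin{eqnarray*}
\{\gamma^{(n-1)}<\alpha^{(n)}\mid f'(\gamma)\geq f'(\alpha),\gamma\in\mathcal{H}'\}\subseteq\{\gamma^{(n-1)}<\alpha^{(n)}\mid f(\gamma)\geq f(\alpha),\gamma\in\mathcal{H}\}.
\end{eqnarray*}

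Since $\alpha\in M(f,\mathcal{H})$, both sets on the right-hand sides above are empty by Definition~\ref{def2}, so both sets on the left-hand sides are empty as well. Applying Definition~\ref{def2} to the pair $(f',\mathcal{H}')$, this says precisely that $\alpha\in M(f',\mathcal{H}')$. Letting $\alpha$ range over $M(f,\mathcal{H})\cap\mathcal{H}'$ then yields the stated inclusion. There is no genuine obstacle here: the only point needing care is the bookkeeping that $f'$ agrees with $f$ on $\mathcal{H}'$, which is exactly what guarantees that the defining inequalities of Definition~\ref{def2} transfer verbatim between the two hypergraphs. One could equivalently argue through the contrapositive formulation in Lemma~\ref{le-ab}, by noting that any hyperedge $\beta$ or $\gamma$ witnessing the non-criticality of $\alpha$ in $\mathcal{H}'$ (condition (A) or (B) there) would also lie in $\mathcal{H}$ and witness its non-criticality in $\mathcal{H}$, contradicting $\alpha\in M(f,\mathcal{H})$.
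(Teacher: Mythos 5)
Your proof is correct and follows essentially the same route as the paper's: fix $\alpha\in M(f,\mathcal{H})\cap\mathcal{H}'$, note that the two sets in Definition~\ref{def2} for $(f',\mathcal{H}')$ are contained in the corresponding (empty) sets for $(f,\mathcal{H})$, and conclude $\alpha\in M(f',\mathcal{H}')$. No gaps.
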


\begin{proof}
The lemma follows from a  straight-forward verification by using Definition~\ref{def2}. 
Let $\alpha^{(n)}\in M(f,\mathcal{H})\cap\mathcal{H}'$.  Then since $\alpha^{(n)}\in M(f,\mathcal{H})$, by Definition~\ref{def2}, 
\begin{eqnarray*}
\#\{\beta^{(n+1)}>\alpha^{(n)}\mid  f(\beta)\leq f(\alpha), \beta\in\mathcal{H}\}=0
\end{eqnarray*}
and 
\begin{eqnarray*}
\#\{\gamma^{(n-1)}<\alpha^{(n)}\mid  f(\gamma)\geq f(\alpha), \gamma\in\mathcal{H}\}=0. 
\end{eqnarray*}
Since $\mathcal{H}'\subseteq \mathcal{H}$, it follows that
\begin{eqnarray*}
\#\{\beta^{(n+1)}>\alpha^{(n)}\mid  f'(\beta)\leq f'(\alpha), \beta\in\mathcal{H}'\}=0
\end{eqnarray*}
and 
\begin{eqnarray*}
\#\{\gamma^{(n-1)}<\alpha^{(n)}\mid  f'(\gamma)\geq f'(\alpha), \gamma\in\mathcal{H}'\}=0. 
\end{eqnarray*}
Therefore, we obtain $\alpha\in M(f',\mathcal{H}')$. The lemma is proved. 
\end{proof}

The next proposition follows from Lemma~\ref{lem2} immediately. 

\begin{proposition}\label{pr-1.88}
Let $\overline f: \Delta\mathcal{H}\longrightarrow \mathbb{R}$ be a discrete Morse function on $\Delta\mathcal{H}$.  Let $f: \mathcal{H}\longrightarrow \mathbb{R}$  and $\underline  f: \delta\mathcal{H}\longrightarrow\mathbb{R}$ be the discrete Morse functions induced from $\overline f$.  Then 
\begin{eqnarray}
M(\overline f,\Delta\mathcal{H})\cap \mathcal{H}&\subseteq& M(f,\mathcal{H}), \label{eq-1.z}\\
M(\overline f,\Delta\mathcal{H})\cap\delta\mathcal{H}&\subseteq& M({\underline   f},\delta\mathcal{H}),\nonumber\\
M(f,\mathcal{H})\cap \delta\mathcal{H}&\subseteq& M({\underline   f},\delta\mathcal{H}). \nonumber 
\end{eqnarray}
\end{proposition}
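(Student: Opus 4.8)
The plan is to derive Proposition~\ref{pr-1.88} as a direct application of Lemma~\ref{lem2} to the three nested inclusions of hypergraphs supplied by Proposition~\ref{pr-2.ee}, namely $\delta\mathcal{H}\subseteq\mathcal{H}\subseteq\Delta\mathcal{H}$. The key observation is that Lemma~\ref{lem2} is stated for an arbitrary pair $\mathcal{H}'\subseteq\mathcal{H}$ with a discrete Morse function $f$ on the larger hypergraph restricted to $f'=f\mid_{\mathcal{H}'}$ on the smaller one, and it yields exactly the containment $M(f,\mathcal{H})\cap\mathcal{H}'\subseteq M(f',\mathcal{H}')$. So each of the three asserted inclusions is an instance of that single lemma, once we match up the larger/smaller pair and verify that the relevant Morse functions are genuinely restrictions of one another.

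First I would record that by Proposition~\ref{pr1}, the functions $f=\overline f\mid_{\mathcal{H}}$ and $\underline f=f\mid_{\delta\mathcal{H}}$ are discrete Morse functions on $\mathcal{H}$ and $\delta\mathcal{H}$ respectively, and moreover $\underline f=\overline f\mid_{\delta\mathcal{H}}$ since restriction is transitive along $\delta\mathcal{H}\subseteq\mathcal{H}\subseteq\Delta\mathcal{H}$. This transitivity is the only compatibility point that needs a remark, and it is immediate. Then I would apply Lemma~\ref{lem2} three times:
\begin{enumerate}[(1).]
\item
to the pair $\mathcal{H}\subseteq\Delta\mathcal{H}$ with the function $\overline f$ and its restriction $f$, giving $M(\overline f,\Delta\mathcal{H})\cap\mathcal{H}\subseteq M(f,\mathcal{H})$, which is (\ref{eq-1.z});
\item
to the pair $\delta\mathcal{H}\subseteq\Delta\mathcal{H}$ with the function $\overline f$ and its restriction $\underline f=\overline f\mid_{\delta\mathcal{H}}$, giving $M(\overline f,\Delta\mathcal{H})\cap\delta\mathcal{H}\subseteq M(\underline f,\delta\mathcal{H})$;
\item
to the pair $\delta\mathcal{H}\subseteq\mathcal{H}$ with the function $f$ and its restriction $\underline f=f\mid_{\delta\mathcal{H}}$, giving $M(f,\mathcal{H})\cap\delta\mathcal{H}\subseteq M(\underline f,\delta\mathcal{H})$.
\end{enumerate}

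There is no real obstacle here; the proposition is a formal corollary and the content is entirely carried by Lemma~\ref{lem2}. The only thing to be careful about is bookkeeping: for inclusion (2) one uses the identity $\underline f=\overline f\mid_{\delta\mathcal{H}}$ rather than treating $\underline f$ as a restriction of $f$, and for inclusion (3) one uses $\underline f=f\mid_{\delta\mathcal{H}}$. Because restriction along the chain is consistent, the same object $\underline f$ serves as the restriction in both (2) and (3), so the three conclusions are mutually compatible. I would therefore present the proof as a single sentence invoking Lemma~\ref{lem2} on each of the three pairs, noting the transitivity of restriction once, which is exactly what the phrase ``follows from Lemma~\ref{lem2} immediately'' in the statement anticipates.
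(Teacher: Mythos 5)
Your proposal is correct and follows exactly the paper's own argument: the paper likewise obtains all three inclusions by applying Lemma~\ref{lem2} to the pairs $\mathcal{H}\subseteq\Delta\mathcal{H}$, $\delta\mathcal{H}\subseteq\Delta\mathcal{H}$ and $\delta\mathcal{H}\subseteq\mathcal{H}$. Your added remark on the transitivity of restriction ($\underline f=\overline f\mid_{\delta\mathcal{H}}=f\mid_{\delta\mathcal{H}}$) is a harmless bit of extra bookkeeping that the paper leaves implicit.
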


\begin{proof}
We  apply Lemma~\ref{lem2}  to the pair $\mathcal{H}\subseteq \Delta\mathcal{H}$.  Then we obtain (\ref{eq-1.z}).  Similarly,  we apply Lemma~\ref{lem2}  to the pairs $\delta\mathcal{H}\subseteq  \Delta\mathcal{H}$  and    $\delta\mathcal{H}\subseteq \mathcal{H}$.  Then we obtain the other two subset relations.  
\end{proof}

\smallskip

\subsection{Extensions of   Discrete Morse Functions}

The next lemma is proved in \cite{forman1}.  

\begin{lemma}{\rm (cf.   \cite[Lemma~2.5]{forman1})}.
\label{le-rev999}
Let $\mathcal{K}$  be a simplicial complex.  Let $g$  be  a  discrete Morse function on $\mathcal{K}$.  Let $\alpha$  be a simplex of $\mathcal{K}$.  Then the conditions \begin{enumerate}[(A).]
\item
there exists $\beta^{(n+1)}>\alpha^{(n)}$, $\beta\in\mathcal{K}$, such that $g(\beta)\leq g(\alpha)$;
\item
there exists $\gamma^{(n-1)}<\alpha^{(n)}$, $\gamma\in\mathcal{K}$, such that $g(\gamma)\geq g(\alpha)$ 
\end{enumerate} 
cannot both be true.   \qed
\end{lemma}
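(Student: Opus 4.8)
The plan is to argue by contradiction, along the lines of Forman's original argument, crucially exploiting the fact that $\mathcal{K}$ is combinatorially closed. Suppose both (A) and (B) hold. Then there exist $\beta^{(n+1)}>\alpha^{(n)}$ and $\gamma^{(n-1)}<\alpha^{(n)}$ in $\mathcal{K}$ with $g(\beta)\leq g(\alpha)$ and $g(\gamma)\geq g(\alpha)$, so that $\gamma<\alpha<\beta$ and hence $g(\gamma)\geq g(\alpha)\geq g(\beta)$.

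The key combinatorial input I would isolate first is the existence of a single ``competitor'' $n$-simplex. Since $\gamma$ has dimension $n-1$ and $\beta$ has dimension $n+1$ with $\gamma<\beta$, the simplex $\beta$ is obtained from $\gamma$ by adjoining exactly two vertices; discarding either one of these two vertices from $\beta$ produces an $n$-simplex containing $\gamma$, and these are the only $n$-simplices lying strictly between $\gamma$ and $\beta$. Hence there is exactly one $n$-simplex $\alpha'\neq\alpha$ with $\gamma<\alpha'<\beta$. Because $\mathcal{K}$ is a simplicial complex, every nonempty subset of $\beta$ lies in $\mathcal{K}$, so $\alpha'\in\mathcal{K}$ and the defining conditions of a discrete Morse function apply to it.

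With $\alpha'$ in hand I would pin down its value from two directions. Applying condition (i) of Definition~\ref{def1} to $\gamma^{(n-1)}$: both $\alpha$ and $\alpha'$ are $n$-cofaces of $\gamma$, and $g(\alpha)\leq g(\gamma)$, so if in addition $g(\alpha')\leq g(\gamma)$ then the cardinality in (i) would be at least $2$, contradicting that $g$ is a discrete Morse function; therefore $g(\alpha')>g(\gamma)$. Symmetrically, applying condition (ii) to $\beta^{(n+1)}$: both $\alpha$ and $\alpha'$ are $n$-faces of $\beta$, and $g(\alpha)\geq g(\beta)$, so if in addition $g(\alpha')\geq g(\beta)$ the cardinality in (ii) would be at least $2$; therefore $g(\alpha')<g(\beta)$. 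Chaining these two strict inequalities with $g(\gamma)\geq g(\alpha)\geq g(\beta)$ yields $g(\alpha')>g(\gamma)\geq g(\alpha)\geq g(\beta)>g(\alpha')$, the desired contradiction.

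I expect the only genuine subtlety to be the existence and, above all, the membership of the competitor $\alpha'$ in $\mathcal{K}$: this is precisely where the closedness of $\mathcal{K}$ is indispensable, since in a general hypergraph the second $n$-simplex between $\gamma$ and $\beta$ need not be present, and the statement may then fail. The remaining steps are routine bookkeeping of the inequality directions in conditions (i) and (ii).
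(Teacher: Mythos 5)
Your proof is correct and is essentially the argument the paper relies on: the paper does not reprove this lemma but cites \cite[Lemma~2.5]{forman1}, and your reasoning (producing the unique second $n$-face $\alpha'$ of $\beta$ containing $\gamma$, whose membership in $\mathcal{K}$ is guaranteed by closedness, and chaining $g(\alpha')>g(\gamma)\geq g(\alpha)\geq g(\beta)>g(\alpha')$) is exactly Forman's proof. It also matches the inequality scheme the paper itself uses in the proof of Proposition~\ref{le-2.a}, where condition (C) plays the role that the existence of $\alpha'$ plays here.
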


The next  lemma is a consequence of Lemma~\ref{le-rev999}.  

\begin{lemma}\label{le-rev731}
Let $\mathcal{H}$  be a hypergraph.  Let $\overline f$  be a discrete Morse function on $\Delta\mathcal{H}$.    Let $\alpha\in \Delta\mathcal{H}$.   Then the conditions
\begin{enumerate}[(A).]
\item
there exists $\beta^{(n+1)}>\alpha^{(n)}$, $\beta\in\Delta\mathcal{H}$, such that $\overline f(\beta)\leq \overline f(\alpha)$;
\item
there exists $\gamma^{(n-1)}<\alpha^{(n)}$, $\gamma\in\Delta\mathcal{H}$, such that $\overline f(\gamma)\geq \overline f(\alpha)$. 
\end{enumerate}
 cannot both be true.   
\end{lemma}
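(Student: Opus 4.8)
The plan is to recognize that this lemma is nothing more than the specialization of the already-established simplicial result Lemma~\ref{le-rev999} to the particular simplicial complex $\mathcal{K}=\Delta\mathcal{H}$. The crucial structural fact is that $\Delta\mathcal{H}$, the associated simplicial complex of $\mathcal{H}$ (Definition~\ref{def-rev2}), is genuinely a simplicial complex and not merely a hypergraph; this is precisely what makes Lemma~\ref{le-rev999} applicable, whereas the analogous dichotomy for a general hypergraph $\mathcal{H}$ need not hold.

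First I would invoke Lemma~\ref{le-rev999} under the substitution $\mathcal{K}:=\Delta\mathcal{H}$ and $g:=\overline f$. Since $\overline f$ is by hypothesis a discrete Morse function on the simplicial complex $\Delta\mathcal{H}$, and $\alpha$ is a simplex of $\Delta\mathcal{H}$, every hypothesis of Lemma~\ref{le-rev999} is satisfied verbatim. Conditions (A) and (B) in the present statement are word-for-word the conditions (A) and (B) of Lemma~\ref{le-rev999} after this substitution, so the conclusion that they cannot both be true transfers immediately.

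There is essentially no obstacle to overcome here; the only point worth a moment's attention is to confirm that one may legitimately feed $\Delta\mathcal{H}$ into a lemma about simplicial complexes, which is justified by Definition~\ref{def-rev2} together with Lemma~\ref{le-2.cc}. Accordingly, the present lemma is best understood as the bridge carrying Forman's simplicial dichotomy into the hypergraph setting by routing it through the associated simplicial complex, and its proof reduces to a single appeal to Lemma~\ref{le-rev999}.
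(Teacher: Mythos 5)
Your proposal is correct and coincides exactly with the paper's own proof, which likewise applies Lemma~\ref{le-rev999} with $\mathcal{K}=\Delta\mathcal{H}$ and $g=\overline f$. The one point you pause on --- that $\Delta\mathcal{H}$ is genuinely a simplicial complex so the simplicial lemma applies --- is the right one and is immediate from Definition~\ref{def-rev2}.
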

\begin{proof}
In Lemma~\ref{le-rev999},  we let $\mathcal{K}$ be $\Delta\mathcal{H}$  and   let $g$  be $\overline f$.   The lemma follows.  
\end{proof}

The next proposition follows from Lemma~\ref{le-rev731}. 

\begin{proposition}[Obstructions for The Extensions of   Discrete Morse Functions]  
Let $\mathcal{H}$  be a hypergraph.  Let $f$  be a discrete Morse function on $\mathcal{H}$.  If there exists $\alpha\in \mathcal{H}$  such that both (A) and (B)  in Lemma~\ref{le-ab}   hold for $\alpha$,  then $f$ cannot be extended to be a discrete Morse function $\overline f$  on  $\Delta\mathcal{H}$.  
\label{pr-rev012}
\end{proposition}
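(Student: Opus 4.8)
The plan is to argue by contradiction, reducing the statement to a direct application of Lemma~\ref{le-rev731}. Suppose, for contradiction, that $f$ did extend to a discrete Morse function $\overline f$ on $\Delta\mathcal{H}$, so that $\overline f\mid_{\mathcal{H}}=f$. By Proposition~\ref{pr-2.ee} we have $\mathcal{H}\subseteq\Delta\mathcal{H}$, and hence the distinguished hyperedge $\alpha$ together with the witnesses $\beta$ and $\gamma$ supplied by conditions (A) and (B) of Lemma~\ref{le-ab} all lie in $\Delta\mathcal{H}$, where $\overline f$ takes on each of them the same value as $f$.

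First I would translate condition (A) for $f$ on $\mathcal{H}$ into the corresponding condition for $\overline f$ on $\Delta\mathcal{H}$: since $\beta^{(n+1)}>\alpha^{(n)}$ with $\beta\in\mathcal{H}\subseteq\Delta\mathcal{H}$ and $\overline f(\beta)=f(\beta)\leq f(\alpha)=\overline f(\alpha)$, condition (A) of Lemma~\ref{le-rev731} holds for $\overline f$ at $\alpha$. Next I would do the same for condition (B): since $\gamma^{(n-1)}<\alpha^{(n)}$ with $\gamma\in\Delta\mathcal{H}$ and $\overline f(\gamma)=f(\gamma)\geq f(\alpha)=\overline f(\alpha)$, condition (B) of Lemma~\ref{le-rev731} also holds for $\overline f$ at $\alpha$. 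Thus both (A) and (B) of Lemma~\ref{le-rev731} hold simultaneously for $\overline f$ at the single simplex $\alpha$ of $\Delta\mathcal{H}$.

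This is precisely the situation forbidden by Lemma~\ref{le-rev731}, which asserts that for a discrete Morse function on a simplicial complex these two conditions cannot both be true at one simplex; the contradiction completes the argument. The only point requiring (routine) care is checking that the witnesses $\beta,\gamma\in\mathcal{H}$ survive as elements of $\Delta\mathcal{H}$ with unchanged function values, which is immediate from the inclusion $\mathcal{H}\subseteq\Delta\mathcal{H}$ and from $\overline f$ extending $f$. There is no genuine obstacle: the whole content of the proposition is that the simplicial-complex prohibition of Lemma~\ref{le-rev731} pulls back along the inclusion $\mathcal{H}\hookrightarrow\Delta\mathcal{H}$ to an obstruction to extendability on $\mathcal{H}$.
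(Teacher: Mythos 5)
Your proposal is correct and takes essentially the same route as the paper: argue by contradiction, transfer the witnesses $\beta,\gamma\in\mathcal{H}\subseteq\Delta\mathcal{H}$ for conditions (A) and (B) to the extension $\overline f$ (which agrees with $f$ on $\mathcal{H}$), and invoke Lemma~\ref{le-rev731} to derive the contradiction. The paper's write-up phrases the final step as a universal statement over all $\alpha\in\mathcal{H}$ rather than pointing at the single offending $\alpha$, but the content is identical.
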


\begin{proof}
Suppose to the contrary  that $f$ can  be extended to be a discrete Morse function $\overline f$  on  $\Delta\mathcal{H}$.  Then  by Lemma~\ref{le-rev731},  for any $\alpha\in \Delta\mathcal{H}$,  the conditions 
(A) and (B)  in Lemma~\ref{le-rev731} 
  cannot both be true.  Note that for any $\alpha\in \mathcal{H}$,   we  have $f(\alpha)=\overline f(\alpha)$.    Thus for any $\alpha\in  \mathcal{H}$,   the conditions (A) and (B)  in Lemma~\ref{le-ab}  cannot both be true.  This contradicts with our assumption that  there exists $\alpha\in \mathcal{H}$  such that both (A) and (B)  in Lemma~\ref{le-ab}   hold for $\alpha$.  Therefore,  $f$ cannot be extended to be a discrete Morse function $\overline f$  on  $\Delta\mathcal{H}$. 
\end{proof}

The next is an example for Proposition~\ref{pr-rev012}. 

\begin{example}\label{ex-2.a}
Let $\mathcal{H}=\{\{v_0\},\{v_0,v_1\},\{v_0,v_1,v_2\}\}$. Let 
\begin{eqnarray*}
f(\{v_0\})=2,  \text{\ \ \ }
f(\{v_0,v_1\})=1,   \text{\ \ \ }
f(\{v_0,v_1,v_2 \})=0.  
\end{eqnarray*}
Then $f$ is a discrete Morse function on $\mathcal{H}$, and $\{v_0,v_1\}$ is not critical. 
\begin{enumerate}[(i).]
\item
The hyperedge $\{v_0,v_1\}$ satisfies both of the conditions (A) and (B); 
\item
The discrete Morse function
$f$ cannot be extended to a discrete Morse function on $\Delta\mathcal{H}$. 
\end{enumerate}
\end{example}

 The next proposition proves that  under certain conditions,  the obstruction for the extensions of   discrete Morse functions
given in Proposition~\ref{pr-rev012} would fail.  

\begin{proposition}\label{le-2.a}
Suppose $\mathcal{H}$ satisfies the following condition 
\begin{quote}
{\bf Condition (C)}. for any $n\geq 1$ and any hyperedges $\beta^{(n+1)}>\alpha^{(n)}>\gamma^{(n-1)}$ of $\mathcal{H}$, there exists $\hat   \alpha^{(n)}\in\mathcal{H}$, $\hat   \alpha\neq \alpha$,  such that $\beta>\hat  \alpha>\gamma$. 
\end{quote}
Then the conditions (A) and (B)  in Lemma~\ref{le-ab}    cannot both be true. 
\end{proposition}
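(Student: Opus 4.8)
The plan is to argue by contradiction, mimicking Forman's proof of Lemma~\ref{le-rev999} (\cite[Lemma~2.5]{forman1}) but using Condition~(C) to supply the extra $n$-hyperedge that is automatic in a genuine simplicial complex. Note first that condition~(B) can only hold when $n\geq 1$ (otherwise there is no $\gamma^{(n-1)}$), so the range $n\geq 1$ of Condition~(C) is exactly the relevant one. So suppose, for some $n\geq 1$ and some $\alpha^{(n)}\in\mathcal{H}$, that both (A) and (B) of Lemma~\ref{le-ab} hold. Then there are $\beta^{(n+1)}>\alpha^{(n)}$ and $\gamma^{(n-1)}<\alpha^{(n)}$ in $\mathcal{H}$ with $f(\beta)\leq f(\alpha)$ and $f(\gamma)\geq f(\alpha)$; in particular I obtain the chain $\gamma<\alpha<\beta$ together with the value inequalities $f(\gamma)\geq f(\alpha)\geq f(\beta)$, so that $f(\gamma)\geq f(\beta)$.

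Since $\beta^{(n+1)}>\alpha^{(n)}>\gamma^{(n-1)}$, Condition~(C) furnishes a second hyperedge $\hat\alpha^{(n)}\in\mathcal{H}$ with $\hat\alpha\neq\alpha$ and $\beta>\hat\alpha>\gamma$. The next step is to extract information about $f(\hat\alpha)$ by applying the two defining inequalities of Definition~\ref{def1} not to $\alpha$ but to its neighbours $\gamma$ and $\beta$, treating $\alpha$ and $\hat\alpha$ as the two competing $n$-hyperedges. Applying condition~(i) of Definition~\ref{def1} to $\gamma^{(n-1)}$: the $n$-hyperedge $\alpha$ already lies above $\gamma$ with $f(\alpha)\leq f(\gamma)$, so by the bound $\leq 1$ the other coface $\hat\alpha$ must fail this, forcing $f(\hat\alpha)>f(\gamma)$. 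Symmetrically, applying condition~(ii) of Definition~\ref{def1} to $\beta^{(n+1)}$: the $n$-hyperedge $\alpha$ already lies below $\beta$ with $f(\alpha)\geq f(\beta)$, so by the bound $\leq 1$ the other face $\hat\alpha$ must fail this, forcing $f(\hat\alpha)<f(\beta)$. Combining the two, $f(\gamma)<f(\hat\alpha)<f(\beta)$, whence $f(\gamma)<f(\beta)$, contradicting $f(\gamma)\geq f(\beta)$ from the first step. This contradiction shows that (A) and (B) cannot both hold.

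The routine parts are just checking that $\alpha$ genuinely lands inside each of the two counted sets and that $\hat\alpha\neq\alpha$ keeps the two cardinalities honest; these are immediate from the hypotheses. The conceptual crux — and the only place Condition~(C) enters — is the production of the second $n$-hyperedge $\hat\alpha$: in an honest simplicial complex the interval between $\gamma^{(n-1)}$ and $\beta^{(n+1)}$ always contains exactly two $n$-simplices, and Forman's Lemma~\ref{le-rev999} silently leans on this; for a general hypergraph the missing non-maximal faces can destroy it, so Condition~(C) is precisely the hypothesis restoring the one ingredient the argument needs. The main thing to get right in the write-up is therefore the bookkeeping of the inequality directions when transferring the Morse conditions from $\alpha$ to $\gamma$ and to $\beta$.
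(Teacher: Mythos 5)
Your proof is correct and follows essentially the same route as the paper: both use Condition~(C) to produce the second hyperedge $\hat\alpha$ and then apply the two defining inequalities of Definition~\ref{def1} to $\gamma$ and to $\beta$ to pin down $f(\hat\alpha)$, reaching a contradiction. The only cosmetic difference is that the paper contradicts via $f(\hat\alpha)<f(\alpha)$ versus $f(\hat\alpha)>f(\alpha)$, while you contradict via $f(\gamma)<f(\beta)$ versus $f(\gamma)\geq f(\beta)$; these are the same inequalities rearranged.
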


\begin{proof}
The proof is similar with the proof of    \cite[Lemma~2.5]{forman1}.  Let $\mathcal{H}$  be a hypergraph satisfying the  condition (C).  Let $f$  be a discrete Morse function on $\mathcal{H}$.  Suppose $(A)$  in Lemma~\ref{le-ab}  is true.    Then  by Definition~\ref{def1},  we  have 
\begin{eqnarray}\label{eq-rev-obs1}
f(\beta)>f(\hat\alpha)
\end{eqnarray}
  for any $\hat   \alpha^{(n)}\in\mathcal{H}$  with  $\hat   \alpha\neq \alpha$  and  $\beta>\hat  \alpha>\gamma$.  By (\ref{eq-rev-obs1})  and $(A)$  in Lemma~\ref{le-ab},  we  have 
  \begin{eqnarray}\label{eq-rev-cont1}
  f(\hat\alpha)< f(\alpha).  
  \end{eqnarray}
Now suppose $(B)$  in Lemma~\ref{le-ab}  is true.  Then  by Definition~\ref{def1},  we  have 
\begin{eqnarray}\label{eq-rev-obs2}
f(\gamma)<f(\hat\alpha)
\end{eqnarray}
 for any $\hat   \alpha^{(n)}\in\mathcal{H}$  with  $\hat   \alpha\neq \alpha$  and  $\beta>\hat  \alpha>\gamma$.  By (\ref{eq-rev-obs2})  and $(B)$  in Lemma~\ref{le-ab},  we  have 
  \begin{eqnarray}\label{eq-rev-cont2}
  f(\hat\alpha)> f(\alpha).  
  \end{eqnarray}
Note that (\ref{eq-rev-cont1}) and (\ref{eq-rev-cont2})  contradict with each other.  Therefore,   the conditions (A) and (B) cannot both be true.   
\end{proof}

There do exist hypergraphs $\mathcal{H}$  and discrete Morse functions $f$  on $\mathcal{H}$ such that the condition (C)  in Proposition~\ref{le-2.a} does not hold while   
the conditions (A) and (B)  in Lemma~\ref{le-ab}    cannot both be true.
We  consider the next example.  

\begin{example}\label{ex-a.a}
Let $\mathcal{H}=\{\{v_0\},\{v_1\},\{v_2\}, \{v_0,v_1,v_2\}\}$. Let $f(\{v_0\})=f(\{v_1\})=f(\{v_2\})=2$,  $f(\{v_0,v_1,v_2\})=0$.   Then $f$ is a discrete Morse function on $\mathcal{H}$.  All the hyperedges are critical.  The condition (C)  in Proposition~\ref{le-2.a} does not hold while   for any $\alpha\in\mathcal{H}$,  
the conditions (A) and (B)  in Lemma~\ref{le-ab}    cannot both be true.
\end{example}

\begin{proposition}
In Example~\ref{ex-a.a},   $f$ cannot be extended to be a discrete Morse function on $\Delta\mathcal{H}$. 
\end{proposition}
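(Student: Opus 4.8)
The plan is to argue by contradiction, assuming $f$ admits an extension to a discrete Morse function $\overline{f}$ on $\Delta\mathcal{H}$. Here $\Delta\mathcal{H}$ is the full closed $2$-simplex on $\{v_0,v_1,v_2\}$, so the only data not already fixed by $f$ are the values on the three edges; I would write
\[
a=\overline{f}(\{v_0,v_1\}),\qquad b=\overline{f}(\{v_0,v_2\}),\qquad c=\overline{f}(\{v_1,v_2\}).
\]
The strategy is to extract two incompatible counting constraints on $a,b,c$: one from the top cell, using the lower condition (ii) of Definition~\ref{def1}, and one from the three vertices, using the upper condition (i).

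First I would apply condition (ii) of Definition~\ref{def1} at $\alpha=\{v_0,v_1,v_2\}$, where $\overline{f}(\alpha)=0$. Its three codimension-one faces are precisely the edges, and the condition permits at most one edge $\gamma$ to satisfy $\overline{f}(\gamma)\geq 0$. Consequently at least two of $a,b,c$ are strictly negative.

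Next I would apply condition (i) of Definition~\ref{def1} at each vertex, where $\overline{f}$ takes the value $2$. The two edges above $\{v_0\}$ are $\{v_0,v_1\}$ and $\{v_0,v_2\}$, so at most one of $a,b$ may be $\leq 2$; hence at least one of $a,b$ exceeds $2$, and likewise at least one of $\{a,c\}$ and at least one of $\{b,c\}$ exceeds $2$. Because each single edge lies in only two of the three incident pairs $\{a,b\}$, $\{a,c\}$, $\{b,c\}$, one edge exceeding $2$ cannot cover all three pairs; therefore at least two of $a,b,c$ exceed $2$.

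The contradiction is then immediate as a pigeonhole count. The edges with value $>2$ and the edges with value $<0$ form disjoint subsets of the three-element set $\{a,b,c\}$, yet the two preceding steps force each subset to have size at least $2$, demanding at least four edges where only three exist. Hence no such $\overline{f}$ can exist. I do not anticipate a serious obstacle; the only step meriting care is the ``vertex-cover'' deduction that covering all three incident pairs forces two large edge-values rather than one, which I would justify exactly as above.
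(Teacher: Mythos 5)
Your proof is correct. You and the paper share the same opening move: applying condition (ii) of Definition~\ref{def1} at the top cell $\{v_0,v_1,v_2\}$ (where $\overline f=0$) to conclude that at least two of the three edge values are strictly negative. After that the two arguments diverge. The paper propagates \emph{downward}: it applies condition (ii) again at each of those two low edges to produce a vertex whose value is below the edge value, hence below $0$, contradicting $f(\{v_i\})=2$ --- a short transitive chain of inequalities. You instead propagate \emph{upward} from the vertices: condition (i) at each vertex (value $2$) forces at least one of its two incident edges to exceed $2$, and since a vertex cover of the triangle on $\{a,b,c\}$ needs two elements, at least two edges exceed $2$; together with the two edges below $0$ this over-counts the three available edges. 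Both routes are sound and equally elementary. Yours is symmetric in the three edges and avoids the paper's ``say $\{v_0,v_1\}$ and $\{v_1,v_2\}$'' case selection, at the cost of the small vertex-cover/pigeonhole step, which you identify as the delicate point and justify correctly.
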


\begin{proof} 
Suppose to the contrary, $\overline f: \Delta\mathcal{H}\longrightarrow \mathbb{R}$ is a discrete Morse function such that $f=\overline f\mid_{\mathcal{H}}$.  Then there exists at least two edges among $\{v_0,v_1\}$, $\{v_1,v_2\}$ and $\{v_0,v_2\}$, say $\{v_0,v_1\}$ and $\{v_1,v_2\}$,  such that 
\begin{eqnarray*}
\overline f( \{v_0,v_1\})<\overline f (\{v_0,v_1,v_2\}),\text{\ \ \ } \overline f( \{v_1,v_2\}) < \overline f (\{v_0,v_1,v_2\}).
\end{eqnarray*} 
Moreover,  we  have both of the followings
\begin{itemize}
\item
$\overline f(u)<  \overline f( \{v_0,v_1\})$ for $u=v_0$ or $v_1$; 
\item
$\overline f(w)<  \overline f( \{v_1,v_2\})$ for $w=v_1$ or $v_2$. 
\end{itemize}
 Hence there exists at least one $0$-hyperedge  $\{x\}$  among $\{v_0\}$, $\{v_1\}$ and $\{v_2\}$ where $\overline f (\{x\})$ is smaller than $\overline f (\{v_0,v_1,v_2\})$.  This contradicts that $\overline f$ is the extension of $f$.  
\end{proof}


\smallskip

\section{Discrete Gradient Vector Fields  on Hypergraphs}\label{s4}

 Let $\mathcal{H}$  be a hypergraph.  In this section, we study the discrete gradient vector fields on $\mathcal{H}$. 

\smallskip

\subsection{Abstract Discrete Gradient Vector Fields on Hypergraphs}\label{ss4.1-r}

\begin{definition}\label{def-a9}
 An {\it (abstract)  discrete gradient vector field} $\mathbb{V}$ on $\mathcal{H}$ is a collection of pairs $\{\alpha^{(n)}<\beta^{(n+1)}\}$   of hyperedges of $\mathcal{H}$,  $n\geq 0$,  such that  there is no non-trivial closed paths of the form
\begin{eqnarray*}
\alpha_0^{(n)}, \beta_0^{(n+1)}, \alpha_1^{(n)}, \beta_1^{(n+1)}, \alpha_2^{(n)}, \cdots, \alpha_r^{(n)}, \beta_r^{(n+1)}, \alpha_{r+1}^{(n)}=\alpha_0^{(n)},
\end{eqnarray*}
where for each $0\leq i\leq r$,   we  have  $\{\alpha_i^{(n)}<\beta_i^{(n+1)}\}\in \mathbb{V}$,  $\{\alpha_{i+1}^{(n)} <\beta_i^{(n+1)}\}\in \mathbb{V}$  and $\alpha_i\neq \alpha_{i+1}$. 
\end{definition}

\begin{definition}\label{def-18rev}
Let $\mathbb{V}$   be a discrete gradient vector field on $\mathcal{H}$.  If 
there does not exist any $n\geq 1$  and any triple $(\gamma^{(n-1)},\alpha^{(n)},  \beta^{(n+1)})$  of hyperedges in  $\mathcal{H}$  such that 
\begin{eqnarray*}
\gamma^{(n-1)}<\alpha^{(n)}<\beta^{(n+1)}
\end{eqnarray*}
and 
\begin{eqnarray*}
\{\gamma<\alpha\} \in \mathbb{V}~~~{\rm and }~~~  \{\alpha<\beta\}\in \mathbb{V},  
\end{eqnarray*}
then we call $\mathbb{V}$ a {\it semi-proper} discrete gradient vector field. 
\end{definition}

\begin{definition}\label{def-17rev}
Let $\mathbb{V}$   be a discrete gradient vector field on $\mathcal{H}$.  If 
each hyperedge of $\mathcal{H}$ is in at most one pair in $\mathbb{V}$, then we call $\mathbb{V}$ a {\it proper} discrete gradient vector field. 
\end{definition}

\begin{remark}
By Definition~\ref{def-18rev}  and  Definition~\ref{def-17rev},  it is direct to see that  a  proper discrete gradient vector field is semi-proper while  semi-proper discrete gradient vector field may not be proper.  
\end{remark}

Let $\mathbb{V}$   be a discrete gradient vector field on $\mathcal{H}$.  

\begin{definition}\label{def-lm}
 We define an $R$-linear map \begin{eqnarray}\label{eq-3.a}
R(\mathbb{V}):~~~ R( \mathcal{H})_n\longrightarrow R(\mathcal{H})_{n+1}  
\end{eqnarray} 
induced from $\mathbb{V}$  where for each $\alpha^{(n)}\in \mathcal{H}$,  the $R$-linear map $R(\mathbb{V})$ sends $\alpha$ to 
\begin{itemize}
\item
  $-\langle\partial\beta,\alpha\rangle\beta$ if $\{\alpha^{(n)}<\beta^{(n+1)}\}\in \mathbb{V}$, 
\item
 $0$ if $\{\alpha^{(n)}<\beta^{(n+1)}\}\notin V$. 
\end{itemize}
\end{definition}
\begin{remark}
 In Definition~\ref{def-lm},  the notion  $\langle\partial\beta,\alpha\rangle$ is the incidence number  of $\beta$ and $\alpha$ in the chain complex $C_*(\Delta\mathcal{H};R)$ (cf. \cite[p. 98]{forman1}).  Note  that  $\langle\partial\beta,\alpha\rangle$ takes the values $\pm 1$ where $1$ refers to the multiplicative unity of $R$. 
\end{remark}

\begin{lemma}\label{le-rev-xxx}
Let $\mathbb{V}$   be a discrete gradient vector field on $\mathcal{H}$.   Then  $\mathbb{V}$ is semi-proper if  and only if  the induced $R$-linear map $R(\mathbb{V})$ satisfies
\begin{eqnarray}\label{eq-rev-b2}
R(\mathbb{V})\circ R(\mathbb{V})=0.
\end{eqnarray}
\end{lemma}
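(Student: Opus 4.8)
The plan is to verify the equivalence by evaluating $R(\mathbb{V})\circ R(\mathbb{V})$ on each free generator of $R(\mathcal{H})_*$ and reading off both implications from the resulting single-term expression. Since $R(\mathbb{V})$ is $R$-linear, it suffices to compute $R(\mathbb{V})(R(\mathbb{V})(\gamma))$ for an arbitrary hyperedge $\gamma^{(n-1)}\in\mathcal{H}$. By Definition~\ref{def-lm}, the image $R(\mathbb{V})(\gamma)$ is either $0$, when $\gamma$ is not the lower hyperedge of any pair in $\mathbb{V}$, or equals $-\langle\partial\alpha,\gamma\rangle\alpha$ for the $\alpha^{(n)}$ with $\{\gamma<\alpha\}\in\mathbb{V}$. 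Applying $R(\mathbb{V})$ a second time and using $R$-linearity, I would record that
\begin{eqnarray*}
R(\mathbb{V})(R(\mathbb{V})(\gamma)) = \langle\partial\alpha,\gamma\rangle\langle\partial\beta,\alpha\rangle\,\beta
\end{eqnarray*}
precisely when both $\{\gamma<\alpha\}\in\mathbb{V}$ and $\{\alpha<\beta\}\in\mathbb{V}$ for suitable $\alpha^{(n)}$ and $\beta^{(n+1)}$, and that the composite is $0$ otherwise. The key structural observation is that $R(\mathbb{V})$ sends each generator to a scalar multiple of a single generator, so this composite is a single monomial with no opportunity for cancellation.

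For the forward implication I would assume $\mathbb{V}$ is semi-proper and show the composite vanishes on every generator $\gamma$. If $\gamma$ is not the lower hyperedge of a pair, the composite already vanishes. Otherwise $\{\gamma<\alpha\}\in\mathbb{V}$ for some $\alpha^{(n)}$; then Definition~\ref{def-18rev} forbids any $\beta^{(n+1)}$ with $\gamma<\alpha<\beta$ and $\{\alpha<\beta\}\in\mathbb{V}$, since such a $\beta$ would produce exactly a forbidden triple $(\gamma,\alpha,\beta)$. Hence $R(\mathbb{V})(\alpha)=0$ and the composite vanishes again, covering all generators.

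For the reverse implication I would argue by contraposition: if $\mathbb{V}$ is not semi-proper, Definition~\ref{def-18rev} supplies a triple $\gamma^{(n-1)}<\alpha^{(n)}<\beta^{(n+1)}$ with $\{\gamma<\alpha\}\in\mathbb{V}$ and $\{\alpha<\beta\}\in\mathbb{V}$. Substituting $\gamma$ into the displayed formula yields $R(\mathbb{V})(R(\mathbb{V})(\gamma))=\pm\beta$, because each incidence number $\langle\partial\alpha,\gamma\rangle$ and $\langle\partial\beta,\alpha\rangle$ is $\pm 1$. As $1\neq 0$ in $R$ and $\beta$ is a free generator, this is nonzero, so $R(\mathbb{V})\circ R(\mathbb{V})\neq 0$.

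The point demanding the most care is the well-definedness underlying the single-monomial computation: I must confirm that each hyperedge is the lower member of at most one pair of $\mathbb{V}$, which is exactly what makes $R(\mathbb{V})$ a genuine map in Definition~\ref{def-lm} and what guarantees that the composite on a generator is a single term rather than a sum in which the surviving term might be annihilated. Once this is secured, both directions follow immediately from the displayed evaluation, and the signs arising from the incidence numbers play no role beyond being units in $R$.
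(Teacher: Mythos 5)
Your argument rests on the premise that each hyperedge of $\mathcal{H}$ is the lower member of at most one pair of $\mathbb{V}$, so that $R(\mathbb{V})$ sends every generator to a single monomial. You flag this as ``the point demanding the most care'' but never establish it, and in fact it cannot be established: Definition~\ref{def-a9} only forbids non-trivial closed paths, and the condition that each hyperedge lies in at most one pair is precisely the \emph{proper} condition of Definition~\ref{def-17rev}, which the paper treats as a strictly stronger hypothesis than the semi-properness this lemma characterizes. The intended reading of Definition~\ref{def-lm} (made explicit in the paper's own proof, which writes $R(\mathbb{V})(\gamma)=\sum_{\{\gamma<\alpha\}\in\mathbb{V}}-\langle\partial\alpha,\gamma\rangle\alpha$) is that the map sums over all pairs having $\gamma$ as lower member. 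Your forward direction survives this with only a cosmetic fix: if $\mathbb{V}$ is semi-proper then $R(\mathbb{V})(\alpha_i)=0$ for every $\alpha_i$ appearing in that sum, so the composite vanishes by linearity.

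The reverse direction is where the gap genuinely bites, because your ``single monomial, no opportunity for cancellation'' claim is exactly the step doing the work, and it fails once $R(\mathbb{V})(\gamma)$ is a sum. Given a bad triple $\gamma<\alpha<\beta$, the coefficient of $\beta$ in $R(\mathbb{V})\circ R(\mathbb{V})(\gamma)$ is $\sum_i\langle\partial\alpha_i,\gamma\rangle\langle\partial\beta,\alpha_i\rangle$ taken over all $\alpha_i$ with both $\{\gamma<\alpha_i\}\in\mathbb{V}$ and $\{\alpha_i<\beta\}\in\mathbb{V}$; if two such $\alpha_1,\alpha_2$ existed, these two contributions would cancel exactly, by the same identity that gives $\partial\circ\partial=0$. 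To rule this out you need the observation that two pairs $\{\alpha_1<\beta\}$ and $\{\alpha_2<\beta\}$ in $\mathbb{V}$ with the same upper hyperedge $\beta$ already form a non-trivial closed path $\alpha_1,\beta,\alpha_2,\beta,\alpha_1$ forbidden by Definition~\ref{def-a9}, so only the one $\alpha$ from the bad triple contributes and the coefficient is $\pm1\neq0$. Supplying this closed-path argument (or restricting the lemma to proper $\mathbb{V}$, which is not what is stated) is what your proof is missing.
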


\begin{proof}
Let $\mathbb{V}$  be a discrete  gradient vector field on $\mathcal{H}$.  Let  $ R(\mathbb{V})$  be the induced $R$-linear map.

($\Longrightarrow$):   Suppose $\mathbb{V}$  is semi-proper.  In order to prove  (\ref{eq-rev-b2}),  we take    an  arbitrary  $\alpha^{(n)} \in \mathcal{H}$.  By the linear property of $R(\mathbb{V})\circ R(\mathbb{V})$,  it suffices to prove 
\begin{eqnarray}\label{eq-rev-b1}
 R(\mathbb{V})\circ R(\mathbb{V})(\gamma^{(n)})=0.  
\end{eqnarray}

{\sc Case~1}.  $R(\mathbb{V})(\gamma^{(n)})=0$.  

Then  (\ref{eq-rev-b1}) follows immediately.

{\sc Case~2}.  $R(\mathbb{V})(\gamma^{(n)})\neq 0$.

Suppose  $\alpha^{(n+1)}\in \mathcal{H}$  such that $\{\gamma^{(n)}<\alpha^{(n+1)}\}\in \mathbb{V}$ 
  and  $\langle\partial\alpha,\gamma\rangle\neq 0$.  
Since $\mathbb{V}$  is semi-proper   and    $\{\gamma^{(n)}<\alpha^{(n+1)}\}\in \mathbb{V}$,  we  have that $\alpha$  is not in any other pairs of the form $\{\alpha^{(n+1)}<\beta^{(n+2)}\}$ in $V$.  Thus we have   $R(\mathbb{V})(\alpha)=0$.   Consequently, 
\begin{eqnarray*}
R(\mathbb{V})\circ R(\mathbb{V})(\gamma^{(n)})&=& R(\mathbb{V})\Big(\sum_{\{\gamma^{(n)}<\alpha^{(n+1)}\}\in \mathbb{V},\atop \langle\partial\alpha,\gamma\rangle\neq 0} -\langle\partial\alpha,\gamma\rangle\alpha\Big)\\
&=&\sum_{\{\gamma^{(n)}<\alpha^{(n+1)}\}\in \mathbb{V},\atop \langle\partial\alpha,\gamma\rangle\neq 0} -\langle\partial\alpha,\gamma\rangle R(\mathbb{V})(\alpha)\\
&=&0.  
\end{eqnarray*}

Summarizing both Case~1 and Case~2, we  obtain (\ref{eq-rev-b1}).   This  implies  (\ref{eq-rev-b2}).

($\Longleftarrow$):   Suppose $R(\mathbb{V})$ satisfies  (\ref{eq-rev-b2}).  In order to prove that $\mathbb{V}$  is semi-proper,  we suppose to the contrary that $\mathbb{V}$  is not semi-proper.  Then by Definition~\ref{def-18rev},  there exist  $n\geq 1$ and a triple   triple $(\gamma^{(n-1)},\alpha^{(n)},  \beta^{(n+1)})$  of hyperedges in  $\mathcal{H}$  such that 
\begin{eqnarray*}
\gamma^{(n-1)}<\alpha^{(n)}<\beta^{(n+1)}
\end{eqnarray*}
and 
\begin{eqnarray*}
\{\gamma<\alpha\} \in \mathbb{V}~~~{\rm and }~~~  \{\alpha<\beta\}\in \mathbb{V}.    
\end{eqnarray*}
Consequently,  
\begin{eqnarray*}
\langle R(\mathbb{V})\circ R(\mathbb{V})(\gamma^{(n-1)}),\beta^{(n+1)}\rangle&=&-\langle\partial\alpha,\gamma\rangle  \langle  R(\mathbb{V})(\alpha^{(n)}),\beta^{(n+1)}\rangle\\
&=&\langle\partial\alpha,\gamma\rangle\langle\partial\beta,\alpha\rangle \langle  \beta^{(n+1)},\beta^{(n+1)}\rangle\\
&=&\langle\partial\alpha,\gamma\rangle\langle\partial\beta,\alpha\rangle\\
& =& \pm 1.  
\end{eqnarray*}
This contradicts with our assumption that $R(\mathbb{V})$ satisfies  (\ref{eq-rev-b2}).  
Therefore,  $\mathbb{V}$  is semi-proper.  
\end{proof}
 \begin{remark}
 In  particular,  if $\mathcal{H}$ is a simplicial complex, then  the $R$-linear map  (\ref{eq-3.a}) is defined in   \cite[Definition~6.1]{forman1}. 
\end{remark}

\smallskip

\subsection{Discrete Gradient Vector Fields of Discrete Morse Functions on Hypergraphs}

Let $f$ be a discrete Morse function on $\mathcal{H}$.  

\begin{definition}\label{def33333}
The {\it discrete gradient vector field of $f$},  denoted as  $\grad   f$,    is the  collection of all  the pairs $\{\alpha^{(n)}<\beta^{(n+1)}\}$ ($n\geq 0$)   of the hyperedges in $\mathcal{H}$  such that    $f(\beta)\leq f(\alpha)$.  
\end{definition}

\begin{lemma}
Let $f$  be a discrete Morse function on $\mathcal{H}$.  Then  $\grad   f$ is a discrete gradient vector field on $\mathcal{H}$.  
\end{lemma}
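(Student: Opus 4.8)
The plan is to verify directly that $\grad f$ satisfies Definition~\ref{def-a9}; that is, I would show that the collection $\grad f$ of pairs $\{\alpha^{(n)}<\beta^{(n+1)}\}$ with $f(\beta)\leq f(\alpha)$ (Definition~\ref{def33333}) admits no non-trivial closed path of the form displayed in Definition~\ref{def-a9}. The argument is by contradiction, and the whole point will be to extract a single step of such a path and confront it with condition (ii) of Definition~\ref{def1}.

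So suppose, toward a contradiction, that $\grad f$ contains a non-trivial closed path
$$\alpha_0^{(n)},\beta_0^{(n+1)},\alpha_1^{(n)},\ldots,\alpha_r^{(n)},\beta_r^{(n+1)},\alpha_{r+1}^{(n)}=\alpha_0^{(n)}.$$
I would fix any single index $i$ (say $i=0$) and read off the two membership conditions imposed by Definition~\ref{def-a9}: both $\{\alpha_i<\beta_i\}\in\grad f$ and $\{\alpha_{i+1}<\beta_i\}\in\grad f$, together with $\alpha_i\neq\alpha_{i+1}$. Unwinding Definition~\ref{def33333}, these two memberships say precisely that $f(\alpha_i)\geq f(\beta_i)$ and $f(\alpha_{i+1})\geq f(\beta_i)$.

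The key step is then immediate: $\alpha_i^{(n)}$ and $\alpha_{i+1}^{(n)}$ are two \emph{distinct} $n$-dimensional faces of the single hyperedge $\beta_i^{(n+1)}$, each satisfying $f(\cdot)\geq f(\beta_i)$. Hence
$$\#\{\gamma^{(n)}<\beta_i^{(n+1)}\mid f(\gamma)\geq f(\beta_i),\ \gamma\in\mathcal{H}\}\geq 2,$$
which contradicts condition (ii) of Definition~\ref{def1} applied to the hyperedge $\beta_i$ (with $\beta_i$ playing the role of $\alpha^{(n+1)}$ there). This contradiction shows that no such closed path can exist, so $\grad f$ is a discrete gradient vector field.

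I do not expect a genuine obstacle here: the only subtlety is parsing Definition~\ref{def-a9} correctly and noticing that its closed-path condition forces, at every step, two distinct codimension-one faces of $\beta_i$ to be simultaneously paired upward with $\beta_i$ by $\grad f$ --- a configuration that the second defining inequality of a discrete Morse function already rules out at a single $\beta_i$, let alone along a cycle. In particular the conclusion obtained is stronger than strictly needed, and it is the exact hypergraph analogue of Forman's corresponding fact for simplicial complexes (cf.\ \cite{forman1}).
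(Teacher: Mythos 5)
Your proposal is correct and takes essentially the same route as the paper: both argue by contradiction from a hypothetical non-trivial closed path, extract the two memberships $\{\alpha_i<\beta_i\},\{\alpha_{i+1}<\beta_i\}\in\grad f$ with $\alpha_i\neq\alpha_{i+1}$, and derive $f(\beta_i)\leq f(\alpha_i)$ and $f(\beta_i)\leq f(\alpha_{i+1})$, contradicting Definition~\ref{def1}. Your version is slightly more explicit than the paper's in pinpointing that it is condition (ii) of Definition~\ref{def1}, applied to $\beta_i$, that is violated by the two distinct codimension-one faces.
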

\begin{proof}
By Definition~\ref{def-a9},  to prove that  $\grad   f$ is a discrete gradient vector field on $\mathcal{H}$,  we only need to prove that there is no nontrivial closed paths of the form
\begin{eqnarray*}
\alpha_0^{(n)}, \beta_0^{(n+1)}, \alpha_1^{(n)}, \beta_1^{(n+1)}, \alpha_2^{(n)}, \cdots, \alpha_r^{(n)}, \beta_r^{(n+1)}, \alpha_{r+1}^{(n)}=\alpha_0^{(n)},
\end{eqnarray*}
where for each $0\leq i\leq r$,  we  have  $\{\alpha_i^{(n)}<\beta_i^{(n+1)}\}\in \grad   f$,  $\{\alpha_{i+1}^{(n)} <\beta_i^{(n+1)}\}\in \grad   f$  and $\alpha_i\neq \alpha_{i+1}$.  Suppose to the contrary,  there exists such a closed path.  Then  by  Definition~\ref{def33333},  we  have 
\begin{eqnarray*}
f(\beta_i)\leq f(\alpha_i) ~~~~~~{\rm and}~~~~~~ f(\beta_i)\leq f(\alpha_{i+1}).  
\end{eqnarray*} 
This contradicts with Definition~\ref{def1}.  Therefore,  there is no such closed paths,  which implies that  $\grad   f$ is a discrete gradient vector field on $\mathcal{H}$.  
\end{proof}

\begin{lemma}\label{def3}
Let $f$  be a discrete Morse function on $\mathcal{H}$.  Then for any $n\geq 0$ and any  $\alpha^{(n)}\in\mathcal{H}$,  the induced $R$-linear map $R(\grad   f)$  of $\grad   f$  is given as follows: 
\begin{enumerate}[(i).]
\item
If there exists $\beta^{(n+1)}>\alpha^{(n)}$, $\beta\in \mathcal{H}$,  such that $f(\beta)\leq f(\alpha)$, then  
\begin{eqnarray*}
R(\grad   f)(\alpha)=-\langle\partial\beta,\alpha\rangle\beta; 
\end{eqnarray*}
\item
If there does not exist any $\beta^{(n+1)}>\alpha^{(n)}$, $\beta\in \mathcal{H}$,  such that $f(\beta)\leq f(\alpha)$, then 
  \begin{eqnarray*}
  R(\grad   f)(\alpha)=0.
  \end{eqnarray*}
\end{enumerate}
\end{lemma}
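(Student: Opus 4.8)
The plan is to unwind the two relevant definitions and verify that the specialization of Definition~\ref{def-lm} to the gradient field $\mathbb{V}=\grad f$ produces exactly the stated formula. By Definition~\ref{def33333}, a pair $\{\alpha^{(n)}<\beta^{(n+1)}\}$ belongs to $\grad f$ precisely when $f(\beta)\leq f(\alpha)$; thus the two cases (i) and (ii) of the lemma correspond exactly to the two branches of Definition~\ref{def-lm}, namely whether the pair $\{\alpha^{(n)}<\beta^{(n+1)}\}$ lies in $\grad f$ or not.

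First I would treat case (i). Suppose there exists $\beta^{(n+1)}>\alpha^{(n)}$ in $\mathcal{H}$ with $f(\beta)\leq f(\alpha)$. Then $\{\alpha^{(n)}<\beta^{(n+1)}\}\in\grad f$, so the first branch of Definition~\ref{def-lm} applies and $R(\grad f)(\alpha)=-\langle\partial\beta,\alpha\rangle\beta$. The step that needs care here is well-definedness: the formula refers to \emph{the} hyperedge $\beta$, so I must check that $\beta$ is unique. This is where the discrete Morse hypothesis enters, rather than the mere definition of a gradient field: condition (i) of Definition~\ref{def1} guarantees $\#\{\beta^{(n+1)}>\alpha^{(n)}\mid f(\beta)\leq f(\alpha),\ \beta\in\mathcal{H}\}\leq 1$, so at most one such $\beta$ exists and the value $-\langle\partial\beta,\alpha\rangle\beta$ is unambiguous.

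Next I would handle case (ii). If no $\beta^{(n+1)}>\alpha^{(n)}$ with $f(\beta)\leq f(\alpha)$ exists, then by Definition~\ref{def33333} there is no pair of the form $\{\alpha^{(n)}<\beta^{(n+1)}\}$ in $\grad f$; hence the second branch of Definition~\ref{def-lm} applies and $R(\grad f)(\alpha)=0$.

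The argument is essentially a bookkeeping exercise translating between the $f$-inequality description of $\grad f$ in Definition~\ref{def33333} and the branch structure of Definition~\ref{def-lm}. The only genuine content, and the one point I expect to require the Morse hypothesis rather than just the abstract gradient-field axioms, is the uniqueness of $\beta$ in case (i), which is exactly condition (i) of Definition~\ref{def1}; everything else follows by direct substitution.
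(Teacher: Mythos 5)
Your proposal is correct and follows essentially the same route as the paper, which simply states that the lemma follows directly from Definition~\ref{def-lm}; you carry out the same unwinding of Definition~\ref{def33333} into the two branches of Definition~\ref{def-lm}. Your additional remark that the uniqueness of $\beta$ (hence well-definedness of the formula in case (i)) is exactly condition (i) of Definition~\ref{def1} is a worthwhile point that the paper's one-line proof leaves implicit.
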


\begin{proof}
The lemma follows directly from Definition~\ref{def-lm}.  
\end{proof}
 

The next example shows that the discrete gradient vector fields of  discrete Morse functions on hypergraphs may not be semi-proper.

\begin{example}\label{ex-3.1}
Consider the hypergraph $\mathcal{H}$  and the discrete Morse function $f$ given in Example~\ref{ex-2.a}. 
Let $\grad   f$ be the discrete gradient vector field of $f$. Then 
\begin{eqnarray*}
(\grad   f)(\{v_0\})&=&\{v_0,v_1\},  \\
(\grad   f)(\{v_0,v_1\})&=& -\{v_0,v_1,v_2\}. 
\end{eqnarray*}
Hence $(\grad   f)\circ (\grad   f) (\{v_0\})= -\{v_0,v_1,v_2\}\neq 0$. 
\end{example}

Nevertheless,  the next proposition shows that by imposing the  condition (C) in Proposition~\ref{le-2.a}) on the hypergraphs,  the discrete gradient vector fields of  discrete Morse functions   must be proper.

\begin{proposition} 
Let $\mathcal{H}$ be a hypergraph satisfying the  condition (C) in  Proposition~\ref{le-2.a}.  Let $f$  be a discrete Morse function on $\mathcal{H}$.    Then $\grad   f$ is a proper discrete gradient vector field.   
\end{proposition}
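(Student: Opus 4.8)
The plan is to fix an arbitrary hyperedge $\alpha^{(n)}\in\mathcal{H}$ and to show that it belongs to at most one pair of $\grad f$. Recall from Definition~\ref{def33333} that the pairs of $\grad f$ containing $\alpha$ are of exactly two kinds: those in which $\alpha$ is the lower element, of the form $\{\alpha^{(n)}<\beta^{(n+1)}\}$ with $f(\beta)\leq f(\alpha)$, and those in which $\alpha$ is the upper element, of the form $\{\gamma^{(n-1)}<\alpha^{(n)}\}$ with $f(\gamma)\geq f(\alpha)$. Thus the total number of pairs of $\grad f$ through $\alpha$ is the number of admissible $\beta$ plus the number of admissible $\gamma$, and proving properness amounts to bounding this sum by $1$.

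First I would bound each of the two counts separately. By condition (i) of Definition~\ref{def1} there is at most one $\beta^{(n+1)}>\alpha^{(n)}$ in $\mathcal{H}$ with $f(\beta)\leq f(\alpha)$, so $\alpha$ is the lower element of at most one pair. By condition (ii) of Definition~\ref{def1} there is at most one $\gamma^{(n-1)}<\alpha^{(n)}$ in $\mathcal{H}$ with $f(\gamma)\geq f(\alpha)$, so $\alpha$ is the upper element of at most one pair. A priori, then, $\alpha$ could lie in as many as two pairs, and it remains only to exclude the possibility that $\alpha$ is simultaneously the lower element of one pair and the upper element of another.

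This is exactly where Condition~(C) enters. Being the lower element of a pair of $\grad f$ means that condition (A) of Lemma~\ref{le-ab} holds for $\alpha$, and being the upper element of a pair means that condition (B) of Lemma~\ref{le-ab} holds for $\alpha$. By Proposition~\ref{le-2.a}, under Condition~(C) the conditions (A) and (B) cannot both be true. Hence at least one of the two counts above is zero, so $\alpha$ lies in at most one pair of $\grad f$. Since $\alpha$ was arbitrary, every hyperedge of $\mathcal{H}$ is contained in at most one pair, and by Definition~\ref{def-17rev} the field $\grad f$ is proper.

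The argument is essentially a bookkeeping assembly of the earlier results, so I do not anticipate a genuine obstacle. The only point requiring care is the clean translation of the two types of pairs through $\alpha$ into conditions (A) and (B) of Lemma~\ref{le-ab}, since it is precisely this identification that licenses the decisive appeal to Proposition~\ref{le-2.a}; everything else follows directly from Definition~\ref{def1} and Definition~\ref{def-17rev}.
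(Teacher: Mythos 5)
Your proof is correct. It is also somewhat more direct than the paper's: the paper first establishes that $\grad f$ is \emph{semi-proper} by showing $R(\grad f)\circ R(\grad f)=0$ for the induced $R$-linear map (via Lemma~\ref{le-rev-xxx} and an argument modeled on \cite[Theorem~6.3~(1)]{forman1}), and only then deduces properness by a contradiction argument that splits into the two cases of a repeated head or a repeated tail, disposing of one case by the single-valuedness of $R(\grad f)$ and the other by Definition~\ref{def1}~(ii). You bypass the algebraic formalism of $R(\grad f)$ entirely and argue purely combinatorially about pair membership: Definition~\ref{def1}~(i) and (ii) each bound one of the two roles a hyperedge $\alpha$ can play in a pair by one occurrence, and Proposition~\ref{le-2.a} (the same decisive input the paper uses, in the guise of semi-properness) forbids $\alpha$ from playing both roles at once. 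The two proofs rest on identical ingredients, but yours makes the logical structure more transparent and avoids invoking the map $R(\grad f)$, whose well-definedness itself quietly depends on Definition~\ref{def1}~(i); the paper's version has the advantage of explicitly recording the intermediate fact $R(\grad f)\circ R(\grad f)=0$, which is reused later (e.g.\ in Lemma~\ref{pr-5}).
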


\begin{proof}
  Firstly,  we prove that  $\grad   f$  is semi-proper.   By Lemma~\ref{le-rev-xxx}, it is sufficient to prove that 
  \begin{eqnarray} \label{eq-yyyrev}
R(\grad   f)\circ R(\grad   f)=0.
\end{eqnarray}
  The proof is similar with the proof of    \cite[Theorem~6.3~(1)]{forman1}.  Suppose $\gamma^{(n-1)},\alpha^{(n)}\in \mathcal{H}$  and 
  \begin{eqnarray*}
  R(\grad   f)(\gamma^{(n-1)})=\pm \alpha^{(n)}. 
  \end{eqnarray*}
  Then $\gamma^{(n-1)}<\alpha^{(n)}$  and $f(\gamma^{(n-1)})\geq f(\alpha^{(n)})$.  Since $\mathcal{H}$  satisfies the condition (C),  by Proposition~\ref{le-2.a},  
  we see that   there does not exist any $\beta^{(n+1)}\in\mathcal{H}$  such that $\alpha^{(n)}<\beta^{(n+1)}$  and $f(\beta^{(n+1)})\leq f(\alpha^{(n)})$.  Thus 
   \begin{eqnarray*}
  R(\grad   f)(\alpha^{(n)})=0, 
  \end{eqnarray*}
  which implies 
    \begin{eqnarray*} 
R(\grad   f)\circ R(\grad   f)(\gamma^{(n-1)})=0.
\end{eqnarray*}
Therefore, we obtain (\ref{eq-yyyrev}).

Secondly,  we prove that $\grad   f$  is  proper.  Suppose to the contrary,  $\grad   f$  is not proper.   Then since $\grad   f$  is  semi-proper,  it follows from Definition~\ref{def-17rev} and Definition~\ref{def-18rev}  that   at least one of the followings is satisfied:
\begin{enumerate}[(a).]
\item
There exist  $\gamma_1^{(n-1)}, \gamma_2^{(n-1)},  \alpha^{(n)}\in \mathcal{H}$  with  $\gamma_1\neq\gamma_2$    such that 
\begin{eqnarray*}
\{\gamma_1<\alpha\}\in \grad   f ~~~~~~{\rm and }~~~~~~ \{\gamma_2<\alpha\}\in \grad   f;  
\end{eqnarray*}
\item
There exist  $\beta_1^{(n+1)}, \beta_2^{(n+1)},  \alpha^{(n)}\in \mathcal{H}$ with $\beta_1\neq\beta_2$  such that 
\begin{eqnarray*}
\{\alpha<\beta_1\}\in \grad   f ~~~~~~{\rm and }~~~~~~ \{\alpha<\beta_2\}\in \grad   f. 
\end{eqnarray*}
\end{enumerate}
Equivalently,  we can re-state (a) and (b) respectively as (a)' and (b)':  
\begin{enumerate}[(a)'.]
\item
There exist  $\gamma_1^{(n-1)}, \gamma_2^{(n-1)},  \alpha^{(n)}\in \mathcal{H}$ with $\gamma_1\neq\gamma_2$   such that 
\begin{eqnarray*}
  R(\grad   f)(\gamma_1^{(n-1)})=\pm \alpha^{(n)}~~~~~~{\rm and }  ~~~~~~  R(\grad   f)(\gamma_2^{(n-1)})=\pm \alpha^{(n)}.   
\end{eqnarray*}
\item
There exist  $\beta_1^{(n+1)}, \beta_2^{(n+1)},  \alpha^{(n)}\in \mathcal{H}$ with $\beta_1\neq\beta_2$ such that 
\begin{eqnarray*}
  R(\grad   f)(\alpha^{(n)})=\pm \beta_1^{(n+1)}~~~~~~{\rm and }  ~~~~~~  R(\grad   f)(\alpha^{(n)})=\pm \beta_2^{(n+1)}.   
\end{eqnarray*}
\end{enumerate}
 Since $R(\grad   f)$  is an $R$-linear map,  we see that (b)'  is impossible.  On the other hand,  (a)'  implies that $\gamma_i<\alpha$  and $f(\alpha)\leq f(\gamma_i)$  for $i=1,2$.  
 This contradicts that $f$  is a discrete Morse function on $\mathcal{H}$.  Therefore,  $\grad   f$  must be proper.  
\end{proof}

\smallskip



\subsection{Restrictions and Extensions of Discrete Gradient Vector Fields}



Let $\mathcal{H}$ and $\mathcal{H}'$ be two hypergraphs such that $\mathcal{H}'\subseteq\mathcal{H}$.

\begin{lemma}\label{le-9.1}
 Let $\mathbb{V}$ be a discrete gradient vector field on $\mathcal{H}'$. Then $\mathbb{V}$ is also a discrete gradient vector field on $\mathcal{H}$. 
\end{lemma}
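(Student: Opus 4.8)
The plan is to unwind Definition~\ref{def-a9} and exploit the fact that the defining data of a discrete gradient vector field---the collection $\mathbb{V}$ of pairs together with the absence of non-trivial closed paths---is intrinsic to $\mathbb{V}$ itself, and refers to the ambient hypergraph only through the requirement that each paired hyperedge belong to it. Accordingly, I would verify separately the two clauses of Definition~\ref{def-a9} for $\mathbb{V}$ viewed as a collection of pairs on $\mathcal{H}$.

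First I would check that $\mathbb{V}$ is a collection of admissible pairs of hyperedges of $\mathcal{H}$. Each pair $\{\alpha^{(n)}<\beta^{(n+1)}\}\in\mathbb{V}$ consists of hyperedges of $\mathcal{H}'$; since $\mathcal{H}'\subseteq\mathcal{H}$ in the sense of Definition~\ref{def-rev1}, both $\alpha$ and $\beta$ are hyperedges of $\mathcal{H}$. Hence every pair of $\mathbb{V}$ is a pair of hyperedges of $\mathcal{H}$, so the first requirement of Definition~\ref{def-a9} is met without further work.

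Next I would verify the no-closed-path condition. The key observation is that any closed path of the form appearing in Definition~\ref{def-a9} is assembled entirely from hyperedges $\alpha_i,\beta_i$ that occur in pairs of $\mathbb{V}$, and all such hyperedges necessarily lie in $\mathcal{H}'$ because $\mathbb{V}$ is a gradient vector field on $\mathcal{H}'$. Consequently, a non-trivial closed path for $\mathbb{V}$ regarded as a vector field on $\mathcal{H}$ would be, verbatim, a non-trivial closed path for $\mathbb{V}$ regarded as a vector field on $\mathcal{H}'$. Since $\mathbb{V}$ is by hypothesis a discrete gradient vector field on $\mathcal{H}'$, no such path exists on $\mathcal{H}'$, and therefore none exists on $\mathcal{H}$. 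Combining the two clauses shows that $\mathbb{V}$ is a discrete gradient vector field on $\mathcal{H}$.

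I do not expect a genuine obstacle here: once one notes that the closed-path condition quantifies only over hyperedges already appearing in $\mathbb{V}$, the statement is essentially tautological. The single point worth articulating carefully is that \emph{enlarging} the ambient hypergraph cannot manufacture new closed paths, precisely because any candidate path is forced to use hyperedges drawn from $\mathbb{V}\subseteq\mathcal{H}'$ and hence cannot exploit the extra hyperedges of $\mathcal{H}\setminus\mathcal{H}'$.
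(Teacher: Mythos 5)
Your proof is correct, and it is leaner than the one in the paper. The paper establishes the same lemma by building a generalized Hasse diagram: digraphs $D_{\mathcal{H}}$, $D_{\mathcal{H}'}$ whose vertices are the hyperedges and whose edges encode the codimension-one face relation, then modified digraphs $D_{\mathcal{H}',\mathbb{V}}$ and $D_{\mathcal{H},\mathbb{V}}$ obtained by reversing the arrows corresponding to pairs of $\mathbb{V}$, with $E(D_{\mathcal{H},\mathbb{V}})$ defined as the disjoint union of $E(D_{\mathcal{H}',\mathbb{V}})$ and the edges of $D_{\mathcal{H}}$ not already present in $D_{\mathcal{H}'}$. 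It then argues that the absence of the forbidden closed paths in $D_{\mathcal{H}',\mathbb{V}}$ propagates to $D_{\mathcal{H},\mathbb{V}}$, following Forman's treatment in \cite[Theorem~6.2]{forman2}. Your argument bypasses this construction entirely by observing that the no-closed-path condition in Definition~\ref{def-a9} quantifies only over hyperedges that already occur in pairs of $\mathbb{V}$, all of which lie in $\mathcal{H}'$, so that a candidate closed path on $\mathcal{H}$ is verbatim a closed path on $\mathcal{H}'$; the hyperedges of $\mathcal{H}\setminus\mathcal{H}'$ simply cannot enter. This is exactly the substantive point hidden in the paper's last paragraph, and your version isolates it cleanly. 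What the paper's longer route buys is a reusable digraph formalism that parallels the classical correspondence between acyclic matchings and Hasse diagrams; what your route buys is brevity and the explicit recognition that the lemma is essentially tautological once one notes that the defining condition is intrinsic to $\mathbb{V}$. One small point worth keeping as you state it: you correctly check the preliminary clause that every pair of $\mathbb{V}$ is a pair of hyperedges of $\mathcal{H}$, which relies on $\mathcal{H}'\subseteq\mathcal{H}$ in the sense of Definition~\ref{def-rev1}; that is the only place the ambient hypergraph enters at all.
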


\begin{proof}
Let $\mathbb{V}$ be a   discrete gradient vector field on $\mathcal{H}'$.  We generalize the Hasse diagram construction (cf. \cite[Section~6]{forman2}) of simplicial complexes and apply a similar argument to hypergraphs.  Specifically,  
\begin{itemize}
\item
We construct a digraph $D_\mathcal{H}$ (resp. $D_{\mathcal{H}'}$) as follows:
\begin{itemize}
\item
{\sc The Vertices of $D_\mathcal{H}$ (resp. $D_{\mathcal{H}'}$)}:  the  vertices of $D_\mathcal{H}$ (resp. $D_{\mathcal{H}'}$) are in 1-1 correspondence with the hyperedges of $\mathcal{H}$ (resp. $\mathcal{H}'$);
\item
{\sc The Directed Edges  of $D_\mathcal{H}$ (resp. $D_{\mathcal{H}'}$)}:   for any hyperedges $\alpha$ and $\beta$  of $\mathcal{H}$ (resp. of $\mathcal{H}'$), we assign   a directed edge in $D_\mathcal{H}$  (resp. in $D_{\mathcal{H}'}$) from $\beta$ to $\alpha$, denoted as $\beta\to\alpha$, iff. $\alpha^{(n)}<\beta^{(n+1)}$ for some $n\geq 0$.  
  \end{itemize}
  \item
  We construct a  digraph $D_{\mathcal{H}',\mathbb{V}}$  as follows:
  
  \begin{itemize}
  
  \item
  {\sc The Vertices of $D_{\mathcal{H}',\mathbb{V}}$}:   the  vertices of $D_{\mathcal{H}',\mathbb{V}}$ are in 1-1 correspondence with the hyperedges of $\mathcal{H}'$;
  
  \item
    {\sc The Directed Edges of $D_{\mathcal{H}',\mathbb{V}}$}:   for any directed edge $\beta^{(n+1)}\to \alpha^{(n)}$ in $D_{\mathcal{H}'}$, 
    \begin{itemize}
\item
    if  $\{\alpha^{(n)},\beta^{(n+1)}\}\in \mathbb{V}$, then we reverse the direction of $\beta\to\alpha$ in $D_{\mathcal{H}'}$ and obtain a  new directed edge $\alpha\to\beta$. We assign $\alpha\to \beta$ as a directed edge of $D_{\mathcal{H}',\mathbb{V}}$;  
     
\item
if  $\{\alpha^{(n)},\beta^{(n+1)}\}\notin \mathbb{V}$,  then we assign $\beta\to \alpha$   as a directed edge of $D_{\mathcal{H}',\mathbb{V}}$. 
\end{itemize}  
  \end{itemize}
  \item
  We construct a digraph $D_{\mathcal{H},\mathbb{V}}$  as follows:
  \begin{itemize}
  \item
   {\sc The Vertices of $D_{\mathcal{H},\mathbb{V}}$}:   the  vertices of $D_{\mathcal{H},\mathbb{V}}$ are in 1-1 correspondence with the hyperedges of $\mathcal{H}$;
  \item
    {\sc The Directed Edges of $D_{\mathcal{H},\mathbb{V}}$}:  Let $E(-)$ denotes the set of directed edges of a digraph and let
\begin{eqnarray}\label{eq-888}
E(D_{\mathcal{H},\mathbb{V}})= E(D_{\mathcal{H}',\mathbb{V}})\cup  ( E(D_{\mathcal{H}})\setminus E(D_{\mathcal{H}'})).  
\end{eqnarray}
 We notice that for any directed edge $\beta\to\alpha$ in  $E(D_{\mathcal{H}})\setminus E(D_{\mathcal{H}'})$,  at least one of $\alpha$ and $\beta$ is not a vertex of $D_{\mathcal{H}'}$  (otherwise the directed edge $\beta\to\alpha$ would be in $E(D_{\mathcal{H}'})$).  Hence the union in (\ref{eq-888}) is a disjoint union. 
  \end{itemize}
  \end{itemize}

We  note that the following two statements are equivalent: 
\begin{enumerate}[(I).]
\item
there is no non-trivial  closed paths on $\mathcal{H}$  (resp. $\mathcal{H}'$) of the form in Definition~\ref{def-a9};
\item
 there is no non-trivial closed paths in the underlying graph of $D_{\mathcal{H},\mathbb{V}}$    (resp. $D_{\mathcal{H}',\mathbb{V}}$)  
 of the form
\begin{eqnarray*}
\alpha_0^{(n)}, \beta_0^{(n+1)}, \alpha_1^{(n)}, \beta_1^{(n+1)}, \alpha_2^{(n)}, \cdots, \alpha_r^{(n)}, \beta_r^{(n+1)}, \alpha_{r+1}^{(n)}=\alpha_0^{(n)},
\end{eqnarray*}
such that  for each $0\leq i\leq r$,   all of the three conditions are satisfied:
\begin{enumerate}[(a).]
\item
  $\{\alpha_i^{(n)}<\beta_i^{(n+1)}\}\in \mathbb{V}$,  
\item
   $\{\alpha_{i+1}^{(n)} <\beta_i^{(n+1)}\}\in \mathbb{V}$,  
\item    
   $\alpha_i\neq \alpha_{i+1}$.  
   \end{enumerate}
\end{enumerate}

Since $\mathbb{V}$ is  a discrete gradient vector field on $\mathcal{H}'$,  it follows from 
the definition  that there is no non-trivial  closed paths on  $\mathcal{H}'$  of the form in Definition~\ref{def-a9}.  
Thus   there is no   non-trivial closed   paths  in the underlying graph of $D_{{\mathcal{H}'},\mathbb{V}}$  satisfying the conditions (a), (b)  and (c)  in  (II).   By the construction of $D_{\mathcal{H},\mathbb{V}}$,   it follows that there is no nontrivial closed     paths  in the underlying graph of $D_{\mathcal{H},\mathbb{V}}$ satisfying the conditions (a), (b)  and (c)  in  (II) as well.  Consequently,  there is no non-trivial  closed paths on  $\mathcal{H}$  of the form in Definition~\ref{def-a9}.  
Therefore,    we  have that $\mathbb{V}$ is a discrete gradient vector field on $\mathcal{H}$. 
 \end{proof}
 
 \begin{remark}
 The last paragraph  in  proof of   Lemma~\ref{le-9.1}   is  a similar argument of \cite[Theorem~6.2]{forman2}. 
 \end{remark}
 
 The next corollary follows from Lemma~\ref{le-9.1}.  
 
 \begin{corollary}\label{co-rev-543}
  Let $\mathbb{V}$ be a proper discrete gradient vector field on $\mathcal{H}'$. Then $\mathbb{V}$ is also a proper discrete gradient vector field on $\mathcal{H}$. 
 \end{corollary}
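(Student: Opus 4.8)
The plan is to reduce everything to Lemma~\ref{le-9.1}, which already does the substantive work, and then to check that the defining property of a \emph{proper} vector field (Definition~\ref{def-17rev}) transfers for free. First I would apply Lemma~\ref{le-9.1} to the pair $\mathcal{H}'\subseteq\mathcal{H}$: since $\mathbb{V}$ is a discrete gradient vector field on $\mathcal{H}'$, it is automatically a discrete gradient vector field on $\mathcal{H}$, i.e. the acyclicity condition of Definition~\ref{def-a9} holds on the larger hypergraph. This discharges the ``gradient vector field'' part of the conclusion and leaves only properness to verify.

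Second, I would observe that $\mathbb{V}$ is literally the same collection of pairs $\{\alpha^{(n)}<\beta^{(n+1)}\}$ whether we regard it as living on $\mathcal{H}'$ or on $\mathcal{H}$; enlarging the ambient hypergraph does not add, remove, or alter any pair. In particular every pair of $\mathbb{V}$ consists of two hyperedges of $\mathcal{H}'$, so the only hyperedges of $\mathcal{H}$ that can occur in a pair of $\mathbb{V}$ are those already lying in $\mathcal{H}'$, while each hyperedge of $\mathcal{H}\setminus\mathcal{H}'$ occurs in no pair at all.

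Finally, I would conclude by checking Definition~\ref{def-17rev} on $\mathcal{H}$. Take any hyperedge of $\mathcal{H}$. If it lies in $\mathcal{H}'$, then properness of $\mathbb{V}$ on $\mathcal{H}'$ already guarantees that it is contained in at most one pair of $\mathbb{V}$; if it lies in $\mathcal{H}\setminus\mathcal{H}'$, then by the previous step it is contained in zero pairs. Either way it is in at most one pair, so $\mathbb{V}$ is proper on $\mathcal{H}$. There is no real obstacle here: all the combinatorial difficulty (the absence of non-trivial closed paths) was absorbed into Lemma~\ref{le-9.1}, and properness passes to $\mathcal{H}$ simply because $\mathbb{V}$ as a set of pairs is unchanged and introduces no new incidences among the extra hyperedges.
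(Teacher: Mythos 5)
Your proposal is correct and follows essentially the same route as the paper: invoke Lemma~\ref{le-9.1} to get that $\mathbb{V}$ is a discrete gradient vector field on $\mathcal{H}$, then verify properness by the same two-case split on whether a hyperedge of $\mathcal{H}$ lies in $\mathcal{H}'$ (at most one pair by hypothesis) or in $\mathcal{H}\setminus\mathcal{H}'$ (no pair at all). No gaps.
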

 
 \begin{proof}
 Let $\mathbb{V}$ be a proper discrete gradient vector field on $\mathcal{H}'$.  By Lemma~\ref{le-9.1},  $\mathbb{V}$  is a discrete gradient vector field on $\mathcal{H}$.  Since $\mathbb{V}$  is proper on $\mathcal{H}'$,  each hyperedge of $\mathcal{H}'$ appears in at most one pair  in $\mathbb{V}$.      Let $\alpha\in \mathcal{H}$.  Then we  have the following cases:
 
 {\sc Case~1}:  $\alpha\notin \mathcal{H}'$. 
 
 Then   $\alpha$  does not appear  in any  pair  in $\mathbb{V}$.  
 
  {\sc Case~2}:  $\alpha\in \mathcal{H}'$.

 Then $\alpha$  appears in at most one pair  in $\mathbb{V}$. 
 
Summarizing both cases,  it follows that  each hyperedge of $\mathcal{H}$   appears in at most one pair  in $\mathbb{V}$.   Hence $\mathbb{V}$  is  proper   on $\mathcal{H}$  as well. 
 \end{proof}

The next corollary is a consequence of Lemma~\ref{le-9.1}.

\begin{corollary}\label{co-9.1}
Let $\mathbb{V}$ be a proper discrete gradient vector field on $\mathcal{H}$. Then $\mathbb{V}$ extends to   a proper discrete gradient vector field $\overline {\mathbb{V}}$ on $\Delta\mathcal{H}$ such that $\overline {\mathbb{V}}\mid _\mathcal{H}=\mathbb{V}$ and  $\overline {\mathbb{V}}\mid_{\Delta\mathcal{H}\setminus\mathcal{H}}=0$.  
\end{corollary}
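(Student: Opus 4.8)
The plan is to define the extension $\overline{\mathbb{V}}$ in the most economical way possible: I would take $\overline{\mathbb{V}}$ to be exactly the same collection of pairs as $\mathbb{V}$, only now read inside the larger complex $\Delta\mathcal{H}$. This makes sense because by Proposition~\ref{pr-2.ee} we have $\mathcal{H}\subseteq\Delta\mathcal{H}$, so every hyperedge of $\mathcal{H}$ is in particular a simplex of $\Delta\mathcal{H}$; hence each pair $\{\alpha^{(n)}<\beta^{(n+1)}\}\in\mathbb{V}$ is a legitimate pair of simplices of $\Delta\mathcal{H}$, and $\overline{\mathbb{V}}:=\mathbb{V}$ is a well-defined collection of such pairs. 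With this choice the two displayed conditions are immediate: since no pair of $\overline{\mathbb{V}}$ involves a simplex of $\Delta\mathcal{H}\setminus\mathcal{H}$, we have $\overline{\mathbb{V}}\mid_{\Delta\mathcal{H}\setminus\mathcal{H}}=0$, while restricting the pairs back to those lying in $\mathcal{H}$ recovers $\mathbb{V}$ itself, i.e. $\overline{\mathbb{V}}\mid_{\mathcal{H}}=\mathbb{V}$.

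What remains is to check that $\overline{\mathbb{V}}$ really is a \emph{proper} discrete gradient vector field on $\Delta\mathcal{H}$, and this is exactly what Corollary~\ref{co-rev-543}, together with Lemma~\ref{le-9.1}, is designed to deliver. I would apply these results to the inclusion $\mathcal{H}\subseteq\Delta\mathcal{H}$, taking the role of $\mathcal{H}'$ to be $\mathcal{H}$ and the ambient hypergraph to be $\Delta\mathcal{H}$. Lemma~\ref{le-9.1} then guarantees that the absence of nontrivial closed $\mathbb{V}$-paths on $\mathcal{H}$ persists on $\Delta\mathcal{H}$, so $\overline{\mathbb{V}}$ is a discrete gradient vector field on $\Delta\mathcal{H}$; and Corollary~\ref{co-rev-543} upgrades this to the statement that $\overline{\mathbb{V}}$ is proper on $\Delta\mathcal{H}$, using that every simplex of $\Delta\mathcal{H}$ either lies outside $\mathcal{H}$, hence in no pair of $\overline{\mathbb{V}}$, or lies in $\mathcal{H}$, hence in at most one pair by the properness of $\mathbb{V}$.

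I do not expect any serious obstacle here, since the substantive content has already been isolated in Lemma~\ref{le-9.1}. The only point that warrants a moment of care is conceptual rather than computational: one should make explicit that a closed $\overline{\mathbb{V}}$-path in $\Delta\mathcal{H}$ uses only pairs of $\overline{\mathbb{V}}=\mathbb{V}$, all of whose simplices lie in $\mathcal{H}$, so any such path would already be a closed $\mathbb{V}$-path in $\mathcal{H}$, contradicting that $\mathbb{V}$ is a gradient vector field on $\mathcal{H}$. This observation is precisely the mechanism behind Lemma~\ref{le-9.1}, so invoking that lemma discharges it without repeating the Hasse-diagram argument.
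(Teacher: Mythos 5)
Your proposal is correct and follows essentially the same route as the paper: the paper's own proof simply applies Corollary~\ref{co-rev-543} (which rests on Lemma~\ref{le-9.1}) to the pair $\mathcal{H}\subseteq\Delta\mathcal{H}$, taking $\overline{\mathbb{V}}$ to be the same collection of pairs read inside $\Delta\mathcal{H}$. Your additional remarks on why the restriction conditions hold are a welcome elaboration but do not change the argument.
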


\begin{proof}
We  substitute the pair $\mathcal{H}'\subseteq \mathcal{H}$ of hypergraphs in Corollary~\ref{co-rev-543}  with the pair $\mathcal{H}\subseteq \Delta\mathcal{H}$.  Then from the proper discrete gradient vector field $\mathbb{V}$ on $\mathcal{H}$  we obtain  a proper discrete gradient vector field $\overline {\mathbb{V}}$ on $\Delta\mathcal{H}$ such that $\overline {\mathbb{V}}\mid _\mathcal{H}=\mathbb{V}$ and  $\overline {\mathbb{V}}\mid_{\Delta\mathcal{H}\setminus\mathcal{H}}=0$.   
\end{proof}

\begin{remark}
In  Corollary~\ref{co-9.1},  we  use  the following  notations
\begin{itemize}
\item
  $\overline {\mathbb{V}}\mid _\mathcal{H}=\mathbb{V}$  for $R(\overline {\mathbb{V}})\mid _{R_*(\mathcal{H})}=R(\mathbb{V})$;
  \item
  $\overline {\mathbb{V}}\mid_{\Delta\mathcal{H}\setminus\mathcal{H}}=0$  for $R(\overline {\mathbb{V}})\mid_{R_*(\Delta\mathcal{H}\setminus\mathcal{H})}=0$. 
  \end{itemize}
\end{remark}

\smallskip

\section{Discrete Morse Functions and Their  Gradients on Hypergraphs
 }\label{s5}

 Let $\mathcal{H}$  be a hypergraph.  Let $\Delta\mathcal{H}$  be the associated simplicial complex and $\delta\mathcal{H}$  be the lower-associated simplicial complex.  Let $\overline f$  be a discrete Morse function on $\Delta\mathcal{H}$.   Let $f$  be the restriction of $\overline f$  on $\mathcal{H}$  and $\underline f$  be the restriction  of $\overline f$  on $\delta\mathcal{H}$.  Then $f$  is a discrete Morse function on $\mathcal{H}$  and $\underline f$  is a discrete Morse function on $\delta\mathcal{H}$.   In this section, we study the discrete gradient vector field of $\overline f$  on $\Delta\mathcal{H}$,   the discrete gradient vector field of $  f$  on $ \mathcal{H}$,   and  the discrete gradient vector field of $\underline f$  on $\delta\mathcal{H}$. 
 
 \smallskip

\subsection{Restrictions and Extensions of The  Discrete Gradient Vector Fields} 

Let $\mathcal{H}$  and $\mathcal{H}'$  be hypergraphs  such  that  $\mathcal{H}'\subseteq\mathcal{H}$.  Let $f$ and $f'$  be discrete Morse functions on $\mathcal{H}$ and $\mathcal{H}'$ respectively such that $f'=f\mid_{\mathcal{H}'}$.  We consider the  discrete gradient vector fields  $\grad  f'$  on $\mathcal{H}'$  and  $\grad  f$  on $\mathcal{H}$.   

\begin{lemma}\label{le-4.x}
Let $\pi(\mathcal{H},\mathcal{H}')$ be  the canonical projection from $R(\mathcal{H})_*$ to $R(\mathcal{H}')_*$ sending an  $R$-linear combination of hyperedges
\begin{eqnarray*}
\sum_{\sigma_i\in\mathcal{H}', x_i\in R} x_i\sigma_i + \sum _{\tau_j\in\mathcal{H}\setminus \mathcal{H}', y_j\in R}y_j \tau_j
\end{eqnarray*}
to the $R$-linear combination of hyperedges
\begin{eqnarray*}
\sum _{\sigma_i\in \mathcal{H}', x_i\in R} x_i\sigma_i.
\end{eqnarray*}
Then 
\begin{eqnarray}\label{eq-4.81}
R(\grad  f')=\pi(\mathcal{H},\mathcal{H}')\circ R(\grad  f). 
\end{eqnarray}
\end{lemma}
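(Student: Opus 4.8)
The plan is to verify the identity on generators and reduce everything to the explicit formula for the induced linear map of a gradient field. Since both $R(\grad f')$ and $\pi(\mathcal{H},\mathcal{H}')\circ R(\grad f)$ are $R$-linear and $R(\mathcal{H}')_*$ is a free submodule of $R(\mathcal{H})_*$ spanned by the hyperedges of $\mathcal{H}'$, it suffices to check that the two maps agree on each generator $\alpha^{(n)}\in\mathcal{H}'$. For such an $\alpha$ I would invoke Lemma~\ref{def3}, which computes $R(\grad f)(\alpha)$ (resp. $R(\grad f')(\alpha)$) as $-\langle\partial\beta,\alpha\rangle\beta$ when there is a $\beta^{(n+1)}>\alpha^{(n)}$ in $\mathcal{H}$ (resp. in $\mathcal{H}'$) with $f(\beta)\le f(\alpha)$, and as $0$ otherwise. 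Because $f'=f\mid_{\mathcal{H}'}$, the inequality $f'(\beta)\le f'(\alpha)$ is literally the inequality $f(\beta)\le f(\alpha)$ whenever $\beta\in\mathcal{H}'$, so the two selection criteria differ only in the ambient hypergraph over which $\beta$ is allowed to range.

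Next I would run a short case analysis on the generator $\alpha\in\mathcal{H}'$, using that Definition~\ref{def1}(i) forces \emph{at most one} qualifying $\beta^{(n+1)}>\alpha^{(n)}$ in $\mathcal{H}$. Let $\beta$ denote this unique $\beta^{(n+1)}>\alpha^{(n)}$ in $\mathcal{H}$ with $f(\beta)\le f(\alpha)$, if it exists. If $\beta$ exists and $\beta\in\mathcal{H}'$, then $\beta$ also qualifies inside $\mathcal{H}'$, and by the same uniqueness it is precisely the hyperedge selected for $R(\grad f')(\alpha)$; both sides then equal $-\langle\partial\beta,\alpha\rangle\beta$, and $\pi(\mathcal{H},\mathcal{H}')$ fixes $\beta$ because $\beta\in\mathcal{H}'$. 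If no such $\beta$ exists in $\mathcal{H}$, then \emph{a fortiori} none exists in $\mathcal{H}'$, so both sides vanish.

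The one case that uses the uniqueness in an essential way is when $\beta$ exists but lies in $\mathcal{H}\setminus\mathcal{H}'$; this is the crux of the argument. Here the right-hand side is $\pi(\mathcal{H},\mathcal{H}')\big(-\langle\partial\beta,\alpha\rangle\beta\big)=0$, since $\pi(\mathcal{H},\mathcal{H}')$ kills $\beta$. For the left-hand side I must show there is no $\tilde\beta^{(n+1)}>\alpha^{(n)}$ in $\mathcal{H}'$ with $f'(\tilde\beta)\le f'(\alpha)$: if there were, then $\tilde\beta\in\mathcal{H}$ would also be a qualifying hyperedge for $\alpha$ in $\mathcal{H}$, so Definition~\ref{def1}(i) would force $\tilde\beta=\beta$, contradicting $\tilde\beta\in\mathcal{H}'$ and $\beta\notin\mathcal{H}'$. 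Hence $R(\grad f')(\alpha)=0$ as well, and the two sides agree; the remaining bookkeeping is routine linearity. Finally, I would remark that the asserted equality is to be read on the common domain $R(\mathcal{H}')_*\subseteq R(\mathcal{H})_*$, which is exactly the subspace on which $R(\grad f')$ and the restriction of $\pi(\mathcal{H},\mathcal{H}')\circ R(\grad f)$ are being compared.
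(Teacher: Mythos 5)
Your proof is correct and follows essentially the same route as the paper's: both reduce to generators $\alpha\in\mathcal{H}'$ by linearity, use the explicit formula of Lemma~\ref{def3}, and resolve the only delicate case (the unique qualifying coface $\beta$ lying in $\mathcal{H}\setminus\mathcal{H}'$) via the at-most-one condition of Definition~\ref{def1}(i), so that both sides vanish. The paper organizes the case split by whether $R(\grad f')(\alpha)$ is zero while you organize it by where $\beta$ lives, but this is only a cosmetic difference; your version is in fact more explicit about where the uniqueness is actually needed.
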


\begin{proof}
Let $\alpha^{(n)}, \beta^{(n+1)}\in \mathcal{H}$.  We  divide the proof into the following  steps.  

\smallskip

{\bf  Step~1}.  Suppose $\alpha,\beta\in\mathcal{H}' $.   Then 
\begin{eqnarray*}
R(\grad  f')(\alpha)=-\langle\partial\beta,\alpha\rangle\beta
\end{eqnarray*}
if  and only if both of the followings are satisfied:
\begin{itemize}
\item
$\alpha < \beta$; 
\item
$  f'(\beta)\leq f'(\alpha)$
\end{itemize}
if  and only if both of the followings are satisfied:
\begin{itemize}
\item
$\alpha < \beta$; 
\item
$  f(\beta)\leq f(\alpha)$
\end{itemize}
if  and only  if 
\begin{eqnarray*}
R(\grad  f)(\alpha)=-\langle\partial\beta,\alpha\rangle\beta.  
\end{eqnarray*}

\smallskip

{\bf  Step~2}.  Suppose $\alpha \in\mathcal{H}' $.   Then 
\begin{eqnarray*}
R(\grad  f')(\alpha)=0 
\end{eqnarray*}
if  and  only  if  there does not  exist any  $\beta\in\mathcal{H}'$   such that  both of  the followings are satisfied:
\begin{itemize}
\item
$\alpha<\beta$;
\item
$f'(\beta)\leq f'(\alpha)$
\end{itemize}
if  and  only  if  there does not  exist any  $\beta\in\mathcal{H}'$   such that  both of  the followings are satisfied:
\begin{itemize}
\item
$\alpha<\beta$;
\item
$f(\beta)\leq f(\alpha)$
\end{itemize}
if   and  only  if  either  of the followings  is   satisfied:
\begin{itemize}
\item
there does not  exist any  $\gamma\in\mathcal{H}$   such that  both of  the followings are satisfied:
\begin{itemize}
\item
$\alpha<\gamma$;
\item
$f(\gamma)\leq f(\alpha)$
\end{itemize}
\item
there exists    $\gamma\in\mathcal{H}\setminus \mathcal{H}'$   such that  both of  the followings are satisfied:
\begin{itemize}
\item
$\alpha<\gamma$;
\item
$f(\gamma)\leq f(\alpha)$
\end{itemize}
\end{itemize}
if   and  only  if  either  of the followings   is   satisfied:
\begin{itemize}
\item
$R(\grad  f)(\alpha)=0$; 
\item
there exists  $\gamma\in\mathcal{H}\setminus \mathcal{H}'$   such that
$
R(\grad  f)(\alpha)=-\langle\partial\gamma,\alpha\rangle\gamma    
$.  
\end{itemize}

\smallskip

{\bf  Step~3}.  Suppose $\alpha \in\mathcal{H}' $.    Then   either of the followings  is  satisfied:  
\begin{itemize}
\item
$R(\grad  f')(\alpha)=0$; 
\item
$R(\grad  f')(\alpha)=-\langle\partial\beta,\alpha\rangle\beta$  for  some $\beta\in\mathcal{H}'$.   
\end{itemize}

\smallskip

{\bf Step~4}.   Suppose $\alpha \in\mathcal{H}' $.  
We  consider the following  cases:

{\sc Case~1}.  $R(\grad  f')(\alpha)=0$.

{\sc Subcase~1.1}.  $R(\grad  f)(\alpha)=0$.  

Then $R(\grad  f')(\alpha)=\pi(\mathcal{H},\mathcal{H}')\circ R(\grad  f)(\alpha)=0$.

{\sc Subcase~1.2}. $R(\grad  f)(\alpha)\neq 0$.

Then   by  the last line  in  Step~2,  we  have that 
\begin{eqnarray*}
R(\grad  f)(\alpha)=-\langle\partial\gamma,\alpha\rangle\gamma
\end{eqnarray*}
  for  some  $\gamma\in\mathcal{H}\setminus \mathcal{H}'$.   Since 
  \begin{eqnarray*}
  \pi(\mathcal{H},\mathcal{H}')(-\langle\partial\gamma,\alpha\rangle\gamma)=0, 
  \end{eqnarray*}
it follows that 
\begin{eqnarray*}
\pi(\mathcal{H},\mathcal{H}')\circ R(\grad  f)(\alpha)=0=R(\grad  f')(\alpha).   
\end{eqnarray*}

{\sc Case~2}.  $R(\grad  f')(\alpha)\neq  0$.

Then by  Step~3,  we  have that  $R(\grad  f')(\alpha)=-\langle\partial\beta,\alpha\rangle\beta$  for  some $\beta\in\mathcal{H}'$.  By  Step~1,  we  have  
\begin{eqnarray*}
\pi(\mathcal{H},\mathcal{H}')\circ R(\grad  f)(\alpha)&=& \pi(\mathcal{H},\mathcal{H}')(-\langle\partial\beta,\alpha\rangle\beta)\\
&=&-\langle\partial\beta,\alpha\rangle\beta\\
&=&R(\grad  f')(\alpha).  
\end{eqnarray*}
Summarizing  the cases,  it follows that  for any $\alpha\in\mathcal{H}'$   we   always  have   
\begin{eqnarray*}
R(\grad  f')(\alpha)=\pi(\mathcal{H},\mathcal{H}')\circ R(\grad  f)(\alpha).  
\end{eqnarray*}

\smallskip

{\bf  Step~5}.  By the last line in Step~4 as well as the linear property of $R(\grad  f)$,  $R(\grad  f')$  and  $\pi(\mathcal{H},\mathcal{H}')$,   we  obtain (\ref{eq-4.81})  and finish the proof.  
\end{proof}

Let $\mathcal{H}$  be  a hypergraph.   The next proposition follows with the help of  Corollary~\ref{co-9.1}.

\begin{proposition}\label{pr-9.1}
Suppose $\mathcal{H}$ is a hypergraph satisfying the condition (C) (cf.  Proposition~\ref{le-2.a}). 
Let $g$ be a discrete Morse function on $\mathcal{H}$.  Then there exists a discrete Morse function $\overline f$ on $\Delta\mathcal{H}$ such that both of the followings are satisfied: 
\begin{enumerate}[(i).]
\item
$\grad   f=\grad   g$ where $f=\overline f\mid_\mathcal{H}$; 
\item
 $(\grad   \overline f)\mid_{\Delta\mathcal{H}\setminus\mathcal{H}}=0$. 
\end{enumerate}
\end{proposition}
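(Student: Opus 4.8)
The plan is to reduce the statement to the gradient-extension machinery already established, together with Forman's theorem that an acyclic discrete vector field on a simplicial complex is the gradient of a discrete Morse function. First I would use Condition~(C): by the earlier proposition asserting that, under Condition~(C), the gradient vector field of a discrete Morse function on $\mathcal{H}$ is proper, the field $\grad g$ is a proper discrete gradient vector field on $\mathcal{H}$. Feeding $\mathbb{V}=\grad g$ into Corollary~\ref{co-9.1} (applied to the pair $\mathcal{H}\subseteq\Delta\mathcal{H}$) then yields a proper discrete gradient vector field $\overline{\mathbb{V}}$ on $\Delta\mathcal{H}$ with $\overline{\mathbb{V}}\mid_\mathcal{H}=\grad g$ and $\overline{\mathbb{V}}\mid_{\Delta\mathcal{H}\setminus\mathcal{H}}=0$.

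The decisive step is to realize $\overline{\mathbb{V}}$ as a gradient. Because $\Delta\mathcal{H}$ is an honest simplicial complex and $\overline{\mathbb{V}}$ is a proper discrete gradient vector field there --- that is, by Definitions~\ref{def-17rev} and \ref{def-a9} a matching in which no hyperedge lies in two pairs and which admits no nontrivial closed path --- Forman's realization theorem \cite{forman1} provides a discrete Morse function $\overline f$ on $\Delta\mathcal{H}$ with $\grad\overline f=\overline{\mathbb{V}}$. I expect this to be the main obstacle, since it is the only genuinely external input: one must check that the acyclicity transported by Corollary~\ref{co-9.1} is precisely Forman's hypothesis and that the resulting $\overline f$ is defined on every simplex of $\Delta\mathcal{H}$, including those in $\Delta\mathcal{H}\setminus\mathcal{H}$.

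Granting $\overline f$, condition (ii) is immediate: $\grad\overline f=\overline{\mathbb{V}}$ and $\overline{\mathbb{V}}\mid_{\Delta\mathcal{H}\setminus\mathcal{H}}=0$ by construction, whence $(\grad\overline f)\mid_{\Delta\mathcal{H}\setminus\mathcal{H}}=0$. For condition (i), set $f=\overline f\mid_\mathcal{H}$, which is a discrete Morse function on $\mathcal{H}$ by Proposition~\ref{pr1}. Applying Lemma~\ref{le-4.x} to the pair $\mathcal{H}\subseteq\Delta\mathcal{H}$ (so the lemma's larger hypergraph is $\Delta\mathcal{H}$ and its function is $\overline f$) gives $R(\grad f)=\pi(\Delta\mathcal{H},\mathcal{H})\circ R(\grad\overline f)$. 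For each $\alpha\in\mathcal{H}$ the element $R(\grad\overline f)(\alpha)=R(\overline{\mathbb{V}})(\alpha)=R(\grad g)(\alpha)$ already lies in $R(\mathcal{H})_{*}$, since $\overline{\mathbb{V}}\mid_\mathcal{H}=\grad g$; the projection therefore fixes it, and $R(\grad f)(\alpha)=R(\grad g)(\alpha)$.

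It remains only to pass from the equality of the $R$-linear maps to the equality of the vector fields. This is automatic: each pair $\{\alpha^{(n)}<\beta^{(n+1)}\}$ of a discrete gradient vector field is recorded by its lower hyperedge, as $R(\cdot)(\alpha)=-\langle\partial\beta,\alpha\rangle\beta$ with $\langle\partial\beta,\alpha\rangle=\pm1$, so the map $R(\cdot)$ determines its set of pairs uniquely. Hence $R(\grad f)=R(\grad g)$ forces $\grad f=\grad g$, establishing (i) and completing the proof.
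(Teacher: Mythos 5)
Your proposal is correct and follows essentially the same route as the paper: Condition~(C) makes $\grad g$ proper, Corollary~\ref{co-9.1} extends it to $\Delta\mathcal{H}$ with trivial action off $\mathcal{H}$, Forman's realization theorem (the paper cites \cite[Theorem~3.5]{forman2}) produces $\overline f$, and restriction gives (i) and (ii). The only difference is cosmetic: you route the final identification $\grad f=\grad g$ through Lemma~\ref{le-4.x} and the induced $R$-linear maps, whereas the paper observes directly that $\grad(\overline f\mid_{\mathcal{H}})=(\grad\overline f)\mid_{\mathcal{H}}$; both are valid.
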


\begin{proof}
Suppose $\mathcal{H}$ is a hypergraph satisfying the  condition (C).   Let $g$  be  a discrete Morse function on $\mathcal{H}$.  Then  by Definition~\ref{def2},  Definition~\ref{def222}  and Proposition~\ref{le-2.a},     we  have that for  any $\alpha\in\mathcal{H}$,  the conditions (A) and (B)  in Lemma~\ref{le-ab}    cannot both be true.   It follows that  $\grad   g$ is a proper discrete gradient vector field on $\mathcal{H}$.  
By Corollary~\ref{co-9.1},  $\grad   g$ extends to a proper discrete gradient vector field $\overline {\grad   g }$ on $\Delta\mathcal{H}$ such that
\begin{eqnarray*}
\overline{\grad   g} \mid_\mathcal{H}=\grad   g
\end{eqnarray*}
and
\begin{eqnarray*}
\overline{\grad   g}\mid_{\Delta\mathcal{H}\setminus\mathcal{H}}=0. 
\end{eqnarray*}
By \cite[Theorem~3.5]{forman2},  there exists a discrete Morse function $\overline f$ on $\Delta\mathcal{H}$ such that 
\begin{eqnarray*}
\overline {\grad   g}=\grad  \overline f.
\end{eqnarray*}
  Let  $f=\overline f\mid _{\mathcal{H}}$  be the restriction of $\overline f$ to $\mathcal{H}$.  Then  
\begin{eqnarray*}
\grad   f = \grad   (\overline f\mid_{\mathcal{H}})
= (\grad  \overline f)\mid_{\mathcal{H}}=\overline {\grad   g}\mid_\mathcal{H}
= {\grad   g}  
\end{eqnarray*}
and
\begin{eqnarray*}
\grad  {\overline f}\mid_{\Delta\mathcal{H}\setminus\mathcal{H}}=\overline{\grad   g}\mid_{\Delta\mathcal{H}\setminus\mathcal{H}}=0. 
\end{eqnarray*}
We  finish the proof. 
\end{proof}

\begin{remark}
By Proposition~\ref{pr-9.1}, in order to study the  discrete gradient vector fields of discrete Morse functions on the hypergraphs satisfying the  condition (C),  it is sufficient to study the discrete gradient vector fields of discrete Morse functions on  the associated simplicial complexes as well as the restrictions of the discrete gradient vector fields to the hypergraphs. 
\end{remark}

\smallskip



\smallskip

\subsection{Discrete Gradient Vector Fields and Critical Hyperedges}

 By Corollary~\ref{cor1},  we let $\overline f$ and ${\underline   f}$ be  the  discrete Morse functions on $\Delta\mathcal{H}$ and $\delta\mathcal{H}$ respectively such that $\underline f=\overline f\mid_{\delta\mathcal{H}}$.  Let $f=\overline f\mid_{\mathcal{H}}$ be the discrete Morse function on $\mathcal{H}$.  We  consider the discrete gradient vector fields 
 \begin{enumerate}[(i).]
 \item
  $\mathbb{V}=\grad   f$  on $\mathcal{H}$, 
  \item
  $\overline{\mathbb{V}}=\grad \overline f$ on $\Delta\mathcal{H}$, 
  \item
  $\underline {\mathbb{V}}= \grad \underline f$ on $\delta\mathcal{H}$. 
  \end{enumerate}

 Let $R(\mathbb{V})$,  $R(\overline{\mathbb{V}})$  and $R(\underline {\mathbb{V}})$  be the  $R$-linear maps   induced by  $\mathbb{V}$,  $\overline{\mathbb{V}}$  and  $\underline{\mathbb{V}}$  respectively.   The next lemma follows.   

\begin{lemma}\label{pr-5}
\begin{enumerate}[(i).]
\item
$R(\mathbb{V})\circ R(\mathbb{V})=0$; 
\item
$M(f,\mathcal{H})=\{\sigma\in\mathcal{H}\mid \sigma\notin {\rm Im}(R(\mathbb{V})) \text{\  and \ } R(\mathbb{V})(\sigma)=0\}$; 
\item
$\#\{\gamma^{(n-1)}\in\mathcal{H}\mid  R(\mathbb{V})(\gamma^{(n-1)})=\pm \alpha \}\leq 1$ for any $\alpha^{(n)}\in \mathcal{H}$; 
\item
The following diagram commutes
\begin{eqnarray*}
\xymatrix{
C_n( \delta\mathcal{H};R)\ar[d]^{R({\underline  { \mathbb{V}}})} \ar[rr]^{{\rm inclusion}}&& R(\mathcal{H})_n\ar[d]^{R(\mathbb{V})} \ar[rr]^{{\rm inclusion}} && C_n( \Delta\mathcal{H};R)\ar[d]^{R(\overline {\mathbb{V}})} \\
C_{n+1}(\delta\mathcal{H};R)&& R(\mathcal{H})_{n+1}\ar[ll]_{\pi(\mathcal{H},\delta\mathcal{H})} && C_{n+1}(\Delta\mathcal{H};R)\ar[ll]_{\pi(\Delta\mathcal{H},\mathcal{H})}. 
}
\end{eqnarray*}
\end{enumerate}
\end{lemma}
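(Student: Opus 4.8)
The plan is to establish the four assertions separately, leaning on the structural lemmas already available rather than on any fresh incidence-number computation.

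I would dispose of (iii) first, since it is purely a restatement of the Morse condition. By Lemma~\ref{def3}, the relation $R(\mathbb{V})(\gamma^{(n-1)})=\pm\alpha^{(n)}$ holds exactly when $\gamma<\alpha$ and $f(\gamma)\geq f(\alpha)$, so the set in (iii) coincides with $\{\gamma^{(n-1)}<\alpha^{(n)}\mid f(\gamma)\geq f(\alpha),\ \gamma\in\mathcal{H}\}$, whose cardinality is at most $1$ by condition (ii) of Definition~\ref{def1}. For (i), I would invoke Lemma~\ref{le-rev-xxx} to reduce $R(\mathbb{V})\circ R(\mathbb{V})=0$ to the semi-properness of $\mathbb{V}=\grad f$. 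Semi-properness can only fail if some $\alpha\in\mathcal{H}$ simultaneously satisfies conditions (A) and (B) of Lemma~\ref{le-ab}, i.e. admits a coface $\beta^{(n+1)}>\alpha$ with $f(\beta)\leq f(\alpha)$ and a face $\gamma^{(n-1)}<\alpha$ with $f(\gamma)\geq f(\alpha)$. Because $f=\overline f\mid_{\mathcal{H}}$ and $\mathcal{H}\subseteq\Delta\mathcal{H}$, these witnesses lie in $\Delta\mathcal{H}$ and keep their $\overline f$-values, so (A) and (B) would then hold at $\alpha$ for $\overline f$ on the simplicial complex $\Delta\mathcal{H}$, contradicting Lemma~\ref{le-rev731}. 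The essential point here, and the reason Example~\ref{ex-3.1} is not a counterexample, is that $f$ is the restriction of a genuine discrete Morse function on $\Delta\mathcal{H}$.

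To prove (ii), I would unwind Definition~\ref{def2}. By Lemma~\ref{def3}, the equation $R(\mathbb{V})(\sigma)=0$ is equivalent to the absence of a coface $\beta^{(n+1)}>\sigma$ with $f(\beta)\leq f(\sigma)$, which is exactly condition (i) of criticality. For the other condition I would observe that $R(\mathbb{V})$ sends each basis hyperedge either to $0$ or to $\pm$ a single basis hyperedge; hence ${\rm Im}(R(\mathbb{V}))$ is the free submodule spanned by the hyperedges that are actually hit, and a basis hyperedge $\sigma$ lies in ${\rm Im}(R(\mathbb{V}))$ iff $R(\mathbb{V})(\gamma)=\pm\sigma$ for some $\gamma^{(n-1)}$, iff there is a face $\gamma<\sigma$ with $f(\gamma)\geq f(\sigma)$. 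Thus $\sigma\notin{\rm Im}(R(\mathbb{V}))$ is equivalent to condition (ii) of criticality, and conjoining the two equivalences yields the stated description of $M(f,\mathcal{H})$.

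Finally, (iv) I would read off as two instances of Lemma~\ref{le-4.x}, after recording $C_*(\delta\mathcal{H};R)=R(\delta\mathcal{H})_*$ and $C_*(\Delta\mathcal{H};R)=R(\Delta\mathcal{H})_*$ so that the inclusions and projections in the diagram are the maps of that lemma. The right-hand square is the relation $R(\grad f)=\pi(\Delta\mathcal{H},\mathcal{H})\circ R(\grad\overline f)$ obtained from Lemma~\ref{le-4.x} applied to the pair $\mathcal{H}\subseteq\Delta\mathcal{H}$ with $f=\overline f\mid_{\mathcal{H}}$, and the left-hand square is $R(\grad\underline f)=\pi(\mathcal{H},\delta\mathcal{H})\circ R(\grad f)$ from the same lemma applied to $\delta\mathcal{H}\subseteq\mathcal{H}$ with $\underline f=f\mid_{\delta\mathcal{H}}$. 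I expect the only step requiring genuine care to be the image analysis in (ii): identifying precisely when a basis hyperedge belongs to ${\rm Im}(R(\mathbb{V}))$ over an arbitrary coefficient ring $R$. This is not deep, but it is where part (iii) earns its keep, since it guarantees that distinct faces are sent to distinct cofaces and so keeps the image \emph{diagonal}, making the membership criterion clean.
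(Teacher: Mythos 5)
Your proof is correct, and in substance it follows the same route as the paper, which simply cites the proofs of (1)--(3) of Forman's Theorem~6.3 for parts (i)--(iii) and invokes Lemma~\ref{le-4.x} twice for part (iv), exactly as you do. The one place where you add something the paper's citation glosses over is part (i): Forman's proof of $V\circ V=0$ rests on his Lemma~2.5 (the mutual exclusivity of conditions (A) and (B)), which \emph{fails} for discrete Morse functions on general hypergraphs --- Example~\ref{ex-3.1} in the paper is precisely a counterexample to (i) when $f$ is not assumed to be a restriction. Your argument correctly isolates the saving hypothesis: since $f=\overline f\mid_{\mathcal{H}}$, any witnesses to (A) and (B) at $\alpha\in\mathcal{H}$ persist in $\Delta\mathcal{H}$ with the same $\overline f$-values, so Lemma~\ref{le-rev731} forbids their coexistence, and Lemma~\ref{le-rev-xxx} then converts semi-properness into $R(\mathbb{V})\circ R(\mathbb{V})=0$. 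This is essentially the contrapositive of Proposition~\ref{pr-rev012} made explicit, and it is the right way to justify the paper's ``similar to Forman'' claim. Your image analysis in (ii) is also sound: since $R(\mathbb{V})$ sends each basis hyperedge to $0$ or to $\pm$ a single basis hyperedge, a basis element lies in ${\rm Im}(R(\mathbb{V}))$ iff it is $\pm R(\mathbb{V})(\gamma)$ for some basis $\gamma$, by comparing coefficients in the free module $R(\mathcal{H})_*$; part (iii) is not strictly needed for that step, but invoking it does no harm.
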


\begin{proof}
The proofs  of  (i), (ii), (iii) are similar with the proofs of (1), (2), (3) of \cite[Theorem~6.3]{forman1} respectively.  The proof of (iv) follows from Lemma~\ref{le-4.x}. 
\end{proof}

With the help of Proposition~\ref{pr-1.88} and Lemma~\ref{pr-5},  the next theorem follows. 
  
  \begin{theorem}\label{co-4.a}
  Let $\sigma\in \mathcal{H}$. Then 
 \begin{eqnarray}\label{eq-4.s} 
\sigma\in M(f, \mathcal{H})\setminus (M(\overline f,\Delta\mathcal{H})\cap \mathcal{H})
\end{eqnarray}
 if  and only if   one of the followings holds
  \begin{enumerate}[(i).]
  \item
 $R(\overline {\mathbb{V}})(\sigma)=\pm \eta$ for some $\eta \in \Delta\mathcal{H}\setminus \mathcal{H}$ and  $R(\overline {\mathbb{V}})(\tau)\neq\pm\sigma$  for any $\tau\in \Delta\mathcal{H}$; 
   \item
$R(\overline {\mathbb{V}})(\sigma)=\pm\eta$ for some $\eta\in \Delta\mathcal{H}\setminus \mathcal{H}$   and   there exists $\tau\in \Delta\mathcal{H}\setminus \mathcal{H}$ such that $R(\overline {\mathbb{V}})(\tau)=\pm\sigma$;
   \item
$R(\overline {\mathbb{V}})(\sigma)=0$    and   there exists $\tau\in \Delta\mathcal{H}\setminus \mathcal{H}$ such that $R(\overline {\mathbb{V}})(\tau)=\pm\sigma$. 
  \end{enumerate}
  \end{theorem}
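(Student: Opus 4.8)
The plan is to translate the criticality conditions on both sides of \eqref{eq-4.s} into statements about the induced $R$-linear maps $R(\mathbb{V})$ and $R(\overline{\mathbb{V}})$, and then carry out a finite propositional bookkeeping. First I would record the two criticality descriptions furnished by Lemma~\ref{pr-5}(ii): applied to $(\mathcal{H},f,\mathbb{V})$ it gives that $\sigma\in M(f,\mathcal{H})$ if and only if $R(\mathbb{V})(\sigma)=0$ and $\sigma\notin{\rm Im}\,R(\mathbb{V})$, while the same lemma applied to the simplicial complex $\Delta\mathcal{H}$ (which is itself a hypergraph, so the lemma is legitimately available with $\overline f,\overline{\mathbb{V}}$) gives that $\sigma\in M(\overline f,\Delta\mathcal{H})$ if and only if $R(\overline{\mathbb{V}})(\sigma)=0$ and $\sigma\notin{\rm Im}\,R(\overline{\mathbb{V}})$. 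Since $\sigma\in\mathcal{H}$, the left side of \eqref{eq-4.s} is exactly the condition ``$\sigma\in M(f,\mathcal{H})$ and $\sigma\notin M(\overline f,\Delta\mathcal{H})$'', the inclusion $M(\overline f,\Delta\mathcal{H})\cap\mathcal{H}\subseteq M(f,\mathcal{H})$ of Proposition~\ref{pr-1.88} guaranteeing that this is a genuine set difference; so it suffices to combine the two descriptions.

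The bridge between the two maps is the right-hand square of the commutative diagram in Lemma~\ref{pr-5}(iv) (equivalently Lemma~\ref{le-4.x} with $\mathcal{H}'=\mathcal{H}\subseteq\Delta\mathcal{H}$): for every hyperedge $\xi\in\mathcal{H}$ one has $R(\mathbb{V})(\xi)=\pi(\Delta\mathcal{H},\mathcal{H})\bigl(R(\overline{\mathbb{V}})(\xi)\bigr)$. I would use this in two ways. Applying it to $\xi=\sigma$: because $R(\overline{\mathbb{V}})(\sigma)$ is either $0$ or a single term $\pm\eta$ with $\eta>\sigma$ in $\Delta\mathcal{H}$, and $\pi(\Delta\mathcal{H},\mathcal{H})$ deletes exactly the generators lying in $\Delta\mathcal{H}\setminus\mathcal{H}$, I obtain that $R(\mathbb{V})(\sigma)=0$ holds if and only if either $R(\overline{\mathbb{V}})(\sigma)=0$ or $R(\overline{\mathbb{V}})(\sigma)=\pm\eta$ for some $\eta\in\Delta\mathcal{H}\setminus\mathcal{H}$. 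Applying it to faces $\gamma\in\mathcal{H}$ with $\gamma<\sigma$: since $\sigma\in\mathcal{H}$ and $R(\overline{\mathbb{V}})(\gamma)$ is again a single term, $R(\mathbb{V})(\gamma)=\pm\sigma$ if and only if $R(\overline{\mathbb{V}})(\gamma)=\pm\sigma$; hence $\sigma\in{\rm Im}\,R(\mathbb{V})$ if and only if some $\gamma\in\mathcal{H}$ satisfies $R(\overline{\mathbb{V}})(\gamma)=\pm\sigma$. In particular, when $\sigma\notin{\rm Im}\,R(\mathbb{V})$ while $\sigma\in{\rm Im}\,R(\overline{\mathbb{V}})$, any witness $\tau$ with $R(\overline{\mathbb{V}})(\tau)=\pm\sigma$ is forced to lie in $\Delta\mathcal{H}\setminus\mathcal{H}$.

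Feeding these translations into the two criticality descriptions reduces the statement to bookkeeping over the shape of $R(\overline{\mathbb{V}})(\sigma)$ --- namely $0$, or $\pm\eta$ with $\eta\in\mathcal{H}$, or $\pm\eta$ with $\eta\in\Delta\mathcal{H}\setminus\mathcal{H}$ --- together with whether a witness $\tau$ for $\sigma\in{\rm Im}\,R(\overline{\mathbb{V}})$ exists. The requirement $R(\mathbb{V})(\sigma)=0$ discards the possibility $R(\overline{\mathbb{V}})(\sigma)=\pm\eta$ with $\eta\in\mathcal{H}$, and $\sigma\notin{\rm Im}\,R(\mathbb{V})$ discards any witness $\tau\in\mathcal{H}$; what is left is that $R(\overline{\mathbb{V}})(\sigma)$ is $0$ or $\pm\eta$ with $\eta\in\Delta\mathcal{H}\setminus\mathcal{H}$, with any witness lying in $\Delta\mathcal{H}\setminus\mathcal{H}$. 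The one remaining combination --- $R(\overline{\mathbb{V}})(\sigma)=0$ with no witness at all --- is precisely the case $\sigma\in M(\overline f,\Delta\mathcal{H})$, which is the case excluded by (i)--(iii). At this point I would invoke Lemma~\ref{le-rev731}, Forman's alternative for $\overline f$ on the simplicial complex $\Delta\mathcal{H}$: a simplex cannot simultaneously be the tail of an up-arrow and the head of a down-arrow of $\overline{\mathbb{V}}$, i.e.\ $R(\overline{\mathbb{V}})(\sigma)\neq0$ forces $\sigma\notin{\rm Im}\,R(\overline{\mathbb{V}})$. This identifies condition (i) with ``$R(\overline{\mathbb{V}})(\sigma)=\pm\eta$, $\eta\in\Delta\mathcal{H}\setminus\mathcal{H}$, and no witness in $\Delta\mathcal{H}$'', shows that case (ii) cannot actually occur (so it contributes nothing to the disjunction), and confirms that (i)$\,\lor\,$(ii)$\,\lor\,$(iii) names exactly the surviving possibilities.

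The main obstacle I expect is not any single computation but keeping the two distinct image notions cleanly separated: $\sigma$ can fail to lie in ${\rm Im}\,R(\mathbb{V})$ and yet lie in ${\rm Im}\,R(\overline{\mathbb{V}})$ through an arrow whose tail is a simplex of $\Delta\mathcal{H}\setminus\mathcal{H}$, and it is exactly this discrepancy --- caused by the non-maximal faces that are missing from $\mathcal{H}$ --- that manufactures the new critical hyperedges in \eqref{eq-4.s}. Making the projection argument of the second paragraph fully precise, so that ``$\sigma\notin{\rm Im}\,R(\mathbb{V})$'' is converted correctly into ``the witness $\tau$ lies in $\Delta\mathcal{H}\setminus\mathcal{H}$'', is the delicate step; the rest is the routine assembly sketched above.
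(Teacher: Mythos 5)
Your proposal is correct and follows essentially the same route as the paper: both sides characterize criticality via Lemma~\ref{pr-5}~(ii), transport the conditions from $R(\mathbb{V})$ to $R(\overline{\mathbb{V}})$ through the projection identity of Lemma~\ref{pr-5}~(iv) (equivalently Lemma~\ref{le-4.x}), and finish with a case analysis on whether $R(\overline{\mathbb{V}})(\sigma)$ vanishes and where a witness $\tau$ with $R(\overline{\mathbb{V}})(\tau)=\pm\sigma$ can live. Your further observation that case (ii) is vacuous --- because Lemma~\ref{le-rev731} forbids a simplex of $\Delta\mathcal{H}$ from simultaneously being the tail of an arrow of $\overline{\mathbb{V}}$ and lying in ${\rm Im}\,R(\overline{\mathbb{V}})$ --- is a correct small sharpening that the paper does not record, and it does not affect the validity of the stated equivalence.
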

  
  \begin{proof}  We  divide the proof into the following  four  steps.  
  
  \smallskip
  
    {\bf Step~1}. 
      Let $\sigma\in \mathcal{H}$.  We  apply  Lemma~\ref{pr-5}~(ii). 
  \begin{enumerate}[(a).]
  \item
By applying  Lemma~\ref{pr-5}~(ii)  on $\overline f$  and $\Delta\mathcal{H}$,   we have that 
\begin{eqnarray*}
\sigma\in M(\overline f,\Delta\mathcal{H})\cap \mathcal{H}
\end{eqnarray*}
 if  and only if   both  of the followings are satisfied:
 \begin{itemize}
 \item
$
R( \overline {\mathbb{V}})(\sigma)=0;
$
 \item
$
 R(\overline {\mathbb{V}})(\tau)\neq \pm\sigma 
$
 for any $\tau\in \Delta\mathcal{H}$.  
 \end{itemize}
By taking the contrapositive  statement,   we  have that  
   \begin{eqnarray*}
   \sigma\notin M(\overline f,\Delta\mathcal{H})\cap \mathcal{H}
   \end{eqnarray*}
    if   and  only  if   either  of the followings are satisfied: 
    \begin{itemize}
    \item
    $ R(\overline {\mathbb{V}})(\sigma)\neq 0$; 
    \item
    there exists $\tau\in \Delta\mathcal{H}$ such that $R(\overline {\mathbb{V}})(\tau)= \pm\sigma$.     
    \end{itemize}
\item
By  applying  Lemma~\ref{pr-5}~(ii)  to $f$  and  $\mathcal{H}$,   we  have that 
 \begin{eqnarray*}
 \sigma\in M(f,\mathcal{H})
 \end{eqnarray*}
   if  and  only  if  both  of the followings  are  satisfied:
   \begin{itemize}
   \item
    $R(\mathbb{V}) (\sigma)=0$; 
  \item
 $
 R( \mathbb{V})(\tau)\neq\pm\sigma 
  $
  for any $\tau\in \mathcal{H}$. 
   \end{itemize}
\end{enumerate}

\smallskip

{\bf Step~2}.   We  apply  Lemma~\ref{pr-5}~(iv).  

\begin{enumerate}[(a).]
\item
For any $\sigma\in\mathcal{H}$  we  have that 
\begin{eqnarray*}
R(\mathbb{V})(\sigma)=0
\end{eqnarray*}
if  and  only  if 
\begin{eqnarray*}
\pi(\Delta\mathcal{H},\mathcal{H}) \circ  R(\overline{\mathbb{ V}})(\sigma)=0
\end{eqnarray*}
if  and only  if either of the followings  are satisfied:
\begin{itemize}
\item
$R(\overline {\mathbb{V}})(\sigma)=0$;
\item
$R( \overline {\mathbb{V}})(\sigma)=\pm\eta \text{ for some } \eta\in \Delta\mathcal{H}\setminus \mathcal{H}$.  
\end{itemize}

\item
 For any $\sigma,\tau\in\mathcal{H}$  we  have that 
  \begin{eqnarray*}
  R( \mathbb{V})(\tau)\neq\pm\sigma  
   \end{eqnarray*}
   if and  only  if 
  \begin{eqnarray*}
  \pi(\Delta\mathcal{H},\mathcal{H})\circ R(\overline {\mathbb{V}})(\tau)\neq\pm\sigma 
  \end{eqnarray*}
  if  and only  if 
  \begin{eqnarray*}
  R( \overline {\mathbb{V}})(\tau)\neq\pm\sigma
   \end{eqnarray*}
   if  and  only if  either of the followings are satisfied:
   \begin{itemize}
   \item
   $R(\overline {\mathbb{V}})(\tau)=0$; 
   \item
   $R( \overline {\mathbb{V}})(\tau)=\pm \eta \text{ for some }\eta\in \Delta\mathcal{H}\setminus\mathcal{H}$. 
   \end{itemize}

\end{enumerate}
  \smallskip
  
  {\bf  Step~3}.     Summarizing  Step~1  and  Step~2,  for any $\sigma\in \mathcal{H}$  we  have  that  (\ref{eq-4.s}) holds   if  and  only if 
  both of the followings are satisfied:
  \begin{itemize}
  \item 
  $\sigma\in M(f, \mathcal{H})$;   
  \item
  $\sigma\notin (M(\overline f,\Delta\mathcal{H})\cap \mathcal{H})$
  \end{itemize}
 which happens  if  and only if  
   both of  the followings are satisfied:
  
  \begin{itemize}
  \item
  both of the followings are satisfied: 
  \begin{itemize}
  \item
  either of the followings  are satisfied:
\begin{itemize}
\item
$R(\overline {\mathbb{V}})(\sigma)=0$;    
\item
$R( \overline {\mathbb{V}})(\sigma)=\pm\eta \text{ for some } \eta\in \Delta\mathcal{H}\setminus \mathcal{H}$;   
\end{itemize}

  \item
    $R(\overline {\mathbb{V}})(\tau)\neq\pm\sigma$  for any $\tau\in\mathcal{H}$;     
  
  \end{itemize}
 \item
  either  of the followings are satisfied: 
    \begin{itemize}
    \item
    $ R(\overline {\mathbb{V}})(\sigma)\neq 0$; 
    \item
    there exists $\tau\in \Delta\mathcal{H}$ such that $R(\overline {\mathbb{V}})(\tau)= \pm\sigma$.     
    \end{itemize}

 \end{itemize}

\smallskip

{\bf  Step~4}.  Let  $\sigma\in M(f, \mathcal{H})\setminus (M(\overline f,\Delta\mathcal{H})\cap \mathcal{H})
$.  We  consider the following cases.  

  {\sc Case~1}.  $R(\overline {\mathbb{V}})(\sigma)\neq  0$.

  Then  by   line 9 in Step~3,   we  have that  $R(\overline {\mathbb{V}})(\sigma)=\pm \eta$ for some $\eta \in \Delta\mathcal{H}\setminus \mathcal{H}$.  By  the last three  lines  in Step~3,  we have the following subcases.

   {\sc Subcase~1.1}.  For any $\tau\in\Delta\mathcal{H}$,  we  always  have  $R(\overline {\mathbb{V}})(\tau)\neq\pm\sigma$.  
   
   Then we obtain Theorem~\ref{co-4.a}~(i).

   {\sc Subcase~1.2}.  There exists $\tau\in \Delta\mathcal{H}$ such that  $R(\overline {\mathbb{V}})(\tau)=\pm\sigma$.

   Then by  line 10  of  Step~3,  we   have $\tau\in \Delta\mathcal{H}\setminus \mathcal{H}$.  Thus  we obtain  Theorem~\ref{co-4.a}~(ii). 
   
   {\sc Case~2}.  $R(\overline {\mathbb{V}})(\sigma)=  0$.

     Then  by  the last line  in  Step~3,  there exists $\tau\in \Delta\mathcal{H}$ such that $R(\overline {\mathbb{V}})(\tau)= \pm\sigma$.     By line 10  in  Step~3, 
      we have $\tau\in \Delta\mathcal{H}\setminus\mathcal{H}$.  We obtain  Theorem~\ref{co-4.a}~(iii).

   Summarizing  the above cases,   we finish the proof.  
     \end{proof}

  The next corollary is a consequence of Theorem~\ref{co-4.a}.

  \begin{corollary}\label{co-4.aa}
 Suppose $R(\overline {\mathbb{V}})(\alpha)= R(\mathbb{V})(\alpha)$ for $\alpha\in\mathcal{H}$ and $R(\overline {\mathbb{V}})(\alpha)=0$ for $\alpha\in\Delta\mathcal{H}\setminus \mathcal{H}$.  Then
 \begin{eqnarray*}
 M(f,\mathcal{H})= M(\bar f,\Delta\mathcal{H})\cap\mathcal{H}. 
 \end{eqnarray*} 
  \end{corollary}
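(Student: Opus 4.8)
The plan is to establish the two inclusions separately. The inclusion $M(\overline f,\Delta\mathcal{H})\cap\mathcal{H}\subseteq M(f,\mathcal{H})$ is already available for free: it is exactly the relation (\ref{eq-1.z}) in Proposition~\ref{pr-1.88}, which holds for any discrete Morse function $\overline f$ on $\Delta\mathcal{H}$ restricting to $f$ on $\mathcal{H}$, with no extra hypothesis. So the entire content of the corollary is the reverse inclusion $M(f,\mathcal{H})\subseteq M(\overline f,\Delta\mathcal{H})\cap\mathcal{H}$, which is equivalent to showing that the set difference
\begin{eqnarray*}
M(f,\mathcal{H})\setminus\big(M(\overline f,\Delta\mathcal{H})\cap\mathcal{H}\big)
\end{eqnarray*}
is empty.

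First I would invoke Theorem~\ref{co-4.a}, which characterizes membership in this set difference: an element $\sigma\in\mathcal{H}$ lies in it if and only if one of the three conditions (i), (ii), (iii) of that theorem holds. The whole strategy is then to show that, under the standing hypothesis $R(\overline{\mathbb{V}})\mid_{\mathcal{H}}=R(\mathbb{V})$ and $R(\overline{\mathbb{V}})\mid_{\Delta\mathcal{H}\setminus\mathcal{H}}=0$, each of these three conditions is impossible. The key observation driving all three cases is that $\sigma\in\mathcal{H}$ forces $R(\overline{\mathbb{V}})(\sigma)=R(\mathbb{V})(\sigma)$, and the latter is by Definition~\ref{def-lm} either $0$ or a scalar multiple of a single hyperedge lying in $\mathcal{H}$; in particular $R(\overline{\mathbb{V}})(\sigma)$ can never equal $\pm\eta$ with $\eta\in\Delta\mathcal{H}\setminus\mathcal{H}$.

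Next I would eliminate the three cases in turn. For case (i), the requirement $R(\overline{\mathbb{V}})(\sigma)=\pm\eta$ with $\eta\in\Delta\mathcal{H}\setminus\mathcal{H}$ directly contradicts the observation above, so (i) fails. The same remark disqualifies the first half of case (ii). For the remaining conditions, namely the existence of $\tau\in\Delta\mathcal{H}\setminus\mathcal{H}$ with $R(\overline{\mathbb{V}})(\tau)=\pm\sigma$ appearing in both (ii) and (iii), I would use the second part of the hypothesis: since $\tau\in\Delta\mathcal{H}\setminus\mathcal{H}$, we have $R(\overline{\mathbb{V}})(\tau)=0$, so $R(\overline{\mathbb{V}})(\tau)=\pm\sigma$ would force $\sigma=0$, which is absurd for a hyperedge. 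Hence the $\tau$-conditions in (ii) and (iii) both fail, ruling out those two cases as well.

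Having shown that none of (i), (ii), (iii) can occur, Theorem~\ref{co-4.a} yields that the set difference is empty, giving $M(f,\mathcal{H})\subseteq M(\overline f,\Delta\mathcal{H})\cap\mathcal{H}$; combining this with the free inclusion from Proposition~\ref{pr-1.88} completes the proof. I do not anticipate a genuine obstacle here, since the argument is essentially a case-by-case application of the characterization theorem; the only point requiring care is bookkeeping of which half of the hypothesis ($R(\overline{\mathbb{V}})\mid_{\mathcal{H}}=R(\mathbb{V})$ versus $R(\overline{\mathbb{V}})\mid_{\Delta\mathcal{H}\setminus\mathcal{H}}=0$) rules out each clause, so I would state explicitly in each case which part is being used.
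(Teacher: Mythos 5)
Your proposal is correct and follows the same route as the paper: the paper's proof likewise reduces the claim (via Proposition~\ref{pr-1.88}) to showing that no $\sigma\in\mathcal{H}$ satisfies (\ref{eq-4.s}), and then observes that the hypotheses rule out each of conditions (i)--(iii) of Theorem~\ref{co-4.a}. The only difference is that the paper dismisses the three cases with ``it is direct,'' whereas you spell out which half of the hypothesis kills each clause --- a worthwhile elaboration, but not a different argument.
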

  
  \begin{proof}
  It suffices to prove that  there does not exist any $\sigma\in\mathcal{H}$  such that (\ref{eq-4.s})  holds.  Since we assume $R(\overline {\mathbb{V}})(\alpha)= R(\mathbb{V})(\alpha)$ for $\alpha\in\mathcal{H}$ and $R(\overline {\mathbb{V}})(\alpha)=0$ for $\alpha\in\Delta\mathcal{H}\setminus \mathcal{H}$,  it is direct that there does not exist any $\sigma\in\mathcal{H}$  such that at least one of (i), (ii), or (iii) in Theorem~\ref{co-4.a}  holds.  Consequently,  by Theorem~\ref{co-4.a},   there does not exist any $\sigma\in\mathcal{H}$  satisfying (\ref{eq-4.s}). 
  \end{proof}
  
  The next corollary follows from Proposition~\ref{pr-9.1} and Corollary~\ref{co-4.aa}. 
  
  \begin{corollary}\label{co-4.bb}
Suppose $\mathcal{H}$  satisfies the  condition (C) (cf.  Proposition~\ref{le-2.a}). 
Let $g$ be a discrete Morse function on $\mathcal{H}$, $\overline f$ a discrete Morse function on $\Delta\mathcal{H}$ given in Proposition~\ref{pr-9.1}, and $f=\overline f\mid_{\mathcal{H}}$.  Then 
\begin{eqnarray*}
M(g,\mathcal{H})= M(f,\mathcal{H})= M(\bar f,\Delta\mathcal{H})\cap\mathcal{H}. 
\end{eqnarray*}
  \end{corollary}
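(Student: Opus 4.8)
The plan is to deduce the two equalities from the two results cited in the sentence preceding the statement, namely Proposition~\ref{pr-9.1} and Corollary~\ref{co-4.aa}, so the argument splits into a chain $M(g,\mathcal{H})=M(f,\mathcal{H})=M(\overline f,\Delta\mathcal{H})\cap\mathcal{H}$. First I would invoke Proposition~\ref{pr-9.1}: since $\mathcal{H}$ satisfies condition~(C), the given discrete Morse function $g$ yields a discrete Morse function $\overline f$ on $\Delta\mathcal{H}$ with $f=\overline f\mid_{\mathcal{H}}$ such that $\grad f=\grad g$ and $(\grad\overline f)\mid_{\Delta\mathcal{H}\setminus\mathcal{H}}=0$. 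This $\overline f$ is exactly the one named in the statement.

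For the first equality $M(g,\mathcal{H})=M(f,\mathcal{H})$, I would observe that criticality of a hyperedge is detected by the discrete gradient vector field alone. Indeed, by Lemma~\ref{pr-5}~(ii) we have $M(f,\mathcal{H})=\{\sigma\in\mathcal{H}\mid \sigma\notin\mathrm{Im}(R(\grad f)) \text{ and } R(\grad f)(\sigma)=0\}$, and the same description holds for $g$ with $R(\grad g)$ in place of $R(\grad f)$. Since Proposition~\ref{pr-9.1}~(i) gives the equality of collections of pairs $\grad f=\grad g$, hence $R(\grad f)=R(\grad g)$ as $R$-linear maps, the two sets coincide and $M(g,\mathcal{H})=M(f,\mathcal{H})$.

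For the second equality $M(f,\mathcal{H})=M(\overline f,\Delta\mathcal{H})\cap\mathcal{H}$, I would apply Corollary~\ref{co-4.aa} with $\mathbb{V}=\grad f$ and $\overline{\mathbb{V}}=\grad\overline f$. Its two hypotheses are $R(\overline{\mathbb{V}})(\alpha)=R(\mathbb{V})(\alpha)$ for $\alpha\in\mathcal{H}$ and $R(\overline{\mathbb{V}})(\alpha)=0$ for $\alpha\in\Delta\mathcal{H}\setminus\mathcal{H}$, and both are furnished by Proposition~\ref{pr-9.1} once the restriction notation is unwound (cf. the Remark after Corollary~\ref{co-9.1}). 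The second hypothesis is literally $(\grad\overline f)\mid_{\Delta\mathcal{H}\setminus\mathcal{H}}=0$, i.e. part (ii) of Proposition~\ref{pr-9.1}. The first hypothesis is $R(\grad\overline f)\mid_{R_*(\mathcal{H})}=R(\grad f)$, which holds because $\grad\overline f$ restricts on $\mathcal{H}$ to $\grad g$ while $\grad f=\grad g$; thus for $\alpha\in\mathcal{H}$ the pair of $\alpha$ under $\grad\overline f$ stays inside $\mathcal{H}$ and agrees with that of $\grad f$. Corollary~\ref{co-4.aa} then delivers the second equality.

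Concatenating the two equalities gives $M(g,\mathcal{H})=M(f,\mathcal{H})=M(\overline f,\Delta\mathcal{H})\cap\mathcal{H}$, as claimed. The proof is entirely a matter of assembling the earlier results; the only point demanding care is the verification that the two restriction conditions of Proposition~\ref{pr-9.1} match the hypotheses of Corollary~\ref{co-4.aa} verbatim, which is the step I would write out most explicitly. No genuine obstacle arises, since both ingredients are already in hand.
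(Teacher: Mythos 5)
Your proof is correct and follows essentially the same route as the paper's: both deduce $M(g,\mathcal{H})=M(f,\mathcal{H})$ from $\grad f=\grad g$ (Proposition~\ref{pr-9.1}~(i)) and then obtain $M(f,\mathcal{H})=M(\overline f,\Delta\mathcal{H})\cap\mathcal{H}$ by checking the two hypotheses of Corollary~\ref{co-4.aa} against parts (i) and (ii) of Proposition~\ref{pr-9.1}. Your explicit appeal to Lemma~\ref{pr-5}~(ii) to justify that the critical set is determined by the gradient vector field is a small but welcome elaboration of a step the paper leaves implicit.
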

  
  \begin{proof}
  By Proposition~\ref{pr-9.1}~(i),  we  have $\grad   f=\grad   g$.  Thus $M(g,\mathcal{H})= M(f,\mathcal{H})$.  By Proposition~\ref{pr-9.1}~(i) and (ii),  we  have 
  \begin{eqnarray*}
  R(\overline {\grad   f})(\alpha)= R(\grad   f)(\alpha)
  \end{eqnarray*}
   for $\alpha\in\mathcal{H}$ and  
   \begin{eqnarray*}
   R(\overline {\grad   f})(\alpha)=0
   \end{eqnarray*}
    for $\alpha\in\Delta\mathcal{H}\setminus \mathcal{H}$.  Thus by Corollary~\ref{co-4.aa},    we have  $M(f,\mathcal{H})= M(\bar f,\Delta\mathcal{H})\cap\mathcal{H}$.  We finish the proof. 
  \end{proof}
  
  \smallskip

\section{An Example}\label{s6}

\begin{example}[cf.  Figure~2]\label{ex-6.8}
Consider a hypergraph
\begin{eqnarray*}
 \mathcal{H}=\{\{v_0\},\{v_1\},\{v_2\},\{v_3\},\{v_0,v_1\},\{v_0,v_3\}, \{v_1,v_3\}, \{v_0,v_1,v_2\}\}.  
\end{eqnarray*}
\begin{enumerate}[(i).]
\item
The infimum chain complex is 
\begin{eqnarray*}
{\rm Inf}_0(\mathcal{H})&=&R(\{v_0\},\{v_1\},\{v_2\},\{v_3\}),\\
 {\rm Inf}_1(\mathcal{H})&=&R(\{v_0,v_1\},\{v_0,v_3\}, \{v_1,v_3\}),\\
 {\rm Inf}_2(\mathcal{H})&=&0
\end{eqnarray*}
and the supremum chain complex is 
\begin{eqnarray*}
{\rm Sup}_0(\mathcal{H})&=&R(\{v_0\},\{v_1\},\{v_2\},\{v_3\}),\\
{\rm Sup}_1(\mathcal{H})&=&R(\{v_0,v_1\},\{v_0,v_3\}, \{v_1,v_3\},\{v_1,v_2\}-\{v_0,v_2\}), \\
 {\rm Sup}_2(\mathcal{H})&=&R(\{v_0,v_1,v_2\}).  
\end{eqnarray*}
\item
  By a direct calculation,    the embedded homology is 
\begin{eqnarray*}
H_0(\mathcal{H};R)={R}^{\oplus 2}, \text{\ \ \ } H_1(\mathcal{H};R)={R}, \text{\ \ \ } H_2(\mathcal{H};R)=0. 
\end{eqnarray*}

\item
 The associated simplicial complex $\Delta\mathcal{H}$ is the simplicial complex obtained by adding the $1$-simplices
\begin{eqnarray*}
\{v_0,v_2\},   \{v_1,v_2\}
\end{eqnarray*}
to $\mathcal{H}$.

\item
The  lower-associated simplicial complex $\delta\mathcal{H}$ is the   discrete  set of  the  vertices $\{v_0,v_1,v_2,v_3\}$.  

\item 
Let $\overline f$  be a discrete Morse function on $\Delta\mathcal{H}$  given by
\begin{eqnarray*}
&\overline f(\{v_0\})=1,\\
& \overline f(\{v_1\})=\overline f(\{v_2\})=\overline f(\{v_3\})=0,\\
& \overline f (\{v_0,v_1\})=\overline f(\{v_1,v_2\})=\overline  f(\{v_1,v_3\})=1,\\
& \overline f(\{v_0,v_2\})=\overline f(\{v_0,v_3\})=2,\\
&\overline f (\{v_0,v_1,v_2\})=2. 
\end{eqnarray*}
Then  the set of the critical simplices of $\overline{f}$  in $\Delta\mathcal{H}$ is  
\begin{eqnarray*}
M(\overline f,  \Delta\mathcal{H})=\{\{v_1\},\{v_2\},\{v_3\}, \{v_0,v_3\}, \{v_1,v_2\}, \{v_1,v_3\}\}.  
\end{eqnarray*}
Let $\overline {\mathbb{V}}= \grad   \overline f$ be the discrete gradient vector field on $\Delta\mathcal{H}$. Then
\begin{eqnarray*}
&\overline {\mathbb{V}}(\{v_0\})=\{v_0,v_1\},\\
&\overline {\mathbb{V}}(\{v_0,v_2\})=\{v_0,v_1,v_2\},\\
&\overline {\mathbb{V}}(\sigma)=0 \text{ for any }\sigma\in \Delta\mathcal{H}\setminus\{\{v_0\}, \{v_0,v_2\}\}.
\end{eqnarray*}

\item
Let $f$  be the restriction of $\overline f$  on  $\mathcal{H}$.  Then  $f$  is given by
\begin{eqnarray*}
& f(\{v_0\})=1,\\
& f(\{v_1\})=f(\{v_2\})= f(\{v_3\})=0,\\
& f(\{v_0,v_1\})=  f(\{v_1,v_3\})=1,\\
& f(\{v_0,v_3\})=2,\\
&f (\{v_0,v_1,v_2\})=2. 
\end{eqnarray*}
Moreover,  the  set of the critical hyperedges  of $f$  in  $\mathcal{H}$  is 
 \begin{eqnarray*}
M(f,   \mathcal{H})=\{\{v_1\},\{v_2\},\{v_3\}, \{v_0,v_3\},  \{v_1,v_3\}, \{v_0,v_1,v_2\}\}.  
\end{eqnarray*}
Let $  {\mathbb{V}}=\grad     f$ be the discrete gradient vector field on $ \mathcal{H}$. Then
\begin{eqnarray*}
& {\mathbb{V}}(\{v_0\})=\{v_0,v_1\},\\
&  {\mathbb{V}}(\{v_0,v_2\})=\{v_0,v_1,v_2\},\\
&  {\mathbb{V}}(\sigma)=0 \text{ for any }\sigma\in   \mathcal{H}\setminus\{\{v_0\}, \{v_0,v_2\}\}.
\end{eqnarray*}

\item
Let $\underline{f}$  be the restriction of $f$  on  $\delta\mathcal{H}$.  Then  $\underline f$  is given by
\begin{eqnarray*}
  f(\{v_0\})=1, ~~~~~~
  f(\{v_1\})=f(\{v_2\})= f(\{v_3\})=0.
\end{eqnarray*}
Moreover,  the  set of the critical  simplices  of $\underline f$  in  $\delta\mathcal{H}$  is 
 \begin{eqnarray*}
M(\underline {f},   \delta\mathcal{H})=\{\{v_0\},\{v_1\},\{v_2\},\{v_3\}\}.  
\end{eqnarray*}
Let $  \underline {\mathbb{V}}=\grad   {\underline  f}$ be the discrete gradient vector field on $ \delta\mathcal{H}$. Then
\begin{eqnarray*}
  {\mathbb{V}}(\{v_i\})=0, ~~~~~~ i=0,1,2,3.  
   \end{eqnarray*}

\end{enumerate}
\end{example}

\begin{figure}[!htbp]
 \begin{center}
\begin{tikzpicture}[line width=1.5pt]

\coordinate [label=right:$v_0$]    (A) at (1.5,2); 
 \coordinate [label=left:$v_1$]   (B) at (0.9,0); 
 \coordinate  [label=right:$v_2$]   (C) at (2.4,0); 
\coordinate  [label=left:$v_3$]   (D) at (0,2);

\fill (1.5,2) circle (2.5pt);
\fill (0.9,0) circle (2.5pt);
\fill (2.4,0) circle (2.5pt);
\fill (0,2) circle (2.5pt);

 \coordinate[label=left:$\mathcal{H}$:] (G) at (-0.5,1);
 \draw [dashed,thick] (C) -- (B);
 \draw [dashed,thick] (C) -- (A);
  \draw [thick] (B) -- (D);
    \draw [thick] (B) -- (A);
    \draw [thick] (D) -- (A);

\fill [fill opacity=0.25][gray!100!white] (B) -- (A) -- (C) -- cycle;

 


\coordinate [label=right:$v_0$]    (P) at (7.5,2); 
 \coordinate [label=left:$v_1$]   (Q) at (6.9,0); 
 \coordinate  [label=right:$v_2$]   (R) at (8.4,0); 
\coordinate  [label=left:$v_3$]   (S) at (6,2);

\draw[->] (7.5,2) -- (7.35,1.5);

\draw[->] (7.95,1) -- (7.95-1/3,0.9);

\fill (7.5,2) circle (2.5pt);
\fill (6.9,0) circle (2.5pt);
\fill (8.4,0) circle (2.5pt);
\fill (6,2) circle (2.5pt);

 \coordinate[label=left:$\Delta\mathcal{H}$ and $\overline V$:] (M) at (5.5,1);
 \draw [thick] (R) -- (Q);
 \draw [thick] (R) -- (P);
  \draw [thick] (Q) -- (S);
    \draw [thick] (P) -- (Q);
    \draw [thick] (S) -- (P);

\fill [fill opacity=0.25][gray!100!white] (R) -- (P) -- (Q) -- cycle;

 \end{tikzpicture}
\end{center}

\caption{Example~\ref{ex-6.8}.}
\end{figure}

\bigskip

\section*{Acknowledgement}

{The  present authors  would like to express their  deep
gratitude to the referee for the careful reading of the manuscript\footnote[4]{This work was supported by the Singapore Ministry of Education Research Grant (AcRF Tier 1 WB-
S No.R-146-000-222-112),  the  President's Graduate Fellowship of National University of Singapore, the Natural Science Foundation of China (Nos.11971144, 12001310)  and  High-Level Scientific Research Foundation of Hebei Province.}. }

\bigskip

\bigskip

\bigskip
 
 {\small
 Shiquan Ren  (for correspondence) 
 
 Address:  
 
 School of Mathematics and Statistics,  Henan University,  Kaifeng  475004,  China.

  e-mail: srenmath@126.com
  
  \bigskip
  
  Chong Wang
  
  Address: School of Mathematics, Renmin University of China, Beijing 100872,  China;  School of Mathematics and Statistics,  Cangzhou  Normal  University,  Cangzhou    061001,  China.

  e-mail:  wangchong\_618@163.com 
  
  \bigskip

Chengyuan Wu
  
Address: Department of Mathematics, National University of Singapore, Singapore
119076, Singapore; Institute of High Performance Computing, A*STAR, Singapore 138632, Singapore.
  
  e-mail: wuchengyuan@u.nus.edu
  
  \bigskip
  
Jie Wu
  
 School of Mathematical Sciences, Hebei Normal University, Shijiazhuang 050024, China;  Center for
Topology and Geometry based Technology, Hebei Normal University, Shijiazhuang 050024, China.

  
  e-mail: wujie@hebtu.edu.cn
  }
  \end{document}